\documentclass[11pt,notitlepage,a4paper,reqno,ps]{amsart}
\usepackage[utf8]{inputenc}
\usepackage{color}
\usepackage{xcolor}

\usepackage{amsmath,amsfonts,euscript,amssymb,amsthm,a4,times}
\usepackage[colorlinks,pdfpagelabels,pdfstartview = FitH,bookmarksopen = true,bookmarksnumbered = true,linkcolor = blue,plainpages = false,hypertexnames = false,citecolor = red, pagebackref=true]{hyperref}


\allowdisplaybreaks

\def\XXint#1#2#3{{\setbox0=\hbox{$#1{#2#3}{\int}$}
\vcenter{\hbox{$#2#3$}}\kern-.5\wd0}}

\newtheorem{theorem}{Theorem}[section]
\newtheorem{corollary}[theorem]{Corollary}
\newtheorem{lemma}[theorem]{Lemma}

\theoremstyle{definition}
\newtheorem{definition}[theorem]{Definition}


\makeatletter \catcode`@=11
\newbox\tr@tto
\setbox\tr@tto=\hbox{{\count0=0\dimen0=-,9pt\dimen1=1,1pt\loop\ifnum
    \count0<11 \advance \count0 by1 \vrule width.51pt height\dimen1
    depth\dimen0\kern-0.17pt\advance\dimen0 by-0.05pt\advance\dimen1
    by0.1pt\repeat \loop\ifnum\count0<21\advance \count0 by1 \vrule
    width.6pt height\dimen1 depth\dimen0\kern-0.2pt \advance\dimen0
    by-0.1pt\advance\dimen1 by 0.05pt\repeat}}
\def\medint{\displaystyle\copy\tr@tto\kern-10.4pt\int}
\numberwithin{equation}{section}

\def\R{{\mathbb R}}
\def\RN{{\mathbb R}^{N}}

\def\proofof#1{\begin{proof}[Proof of #1]}

\def\loc{\rm loc}

\def\R{{\mathbb R}}
\def\RN{{\mathbb R}^{N}}

\def\proofof#1{\begin{proof}[Proof of #1]}

\def\loc{\rm loc}

\def\uj{u_{j}}

\catcode`@=11
\newbox\tr@tto
\setbox\tr@tto=\hbox{{\count0=0\dimen0=-,9pt\dimen1=1,1pt\loop\ifnum\count0<11 \advance \count0 by1 \vrule width.51pt height\dimen1
     depth\dimen0\kern-0.17pt\advance\dimen0 by-0.05pt\advance\dimen1
     by0.1pt\repeat \loop\ifnum\count0<21\advance \count0 by1 \vrule
     width.6pt height\dimen1 depth\dimen0\kern-0.2pt \advance\dimen0
     by-0.1pt\advance\dimen1 by 0.05pt\repeat}}
\def\medint{\displaystyle\copy\tr@tto\kern-10.4pt\int}
\catcode`@=12

\newcommand{\LL}{\mathrm{L}}
\newcommand{\WW}{\mathrm{W}}

\newcommand{\CC}{\mathrm{C}}

\numberwithin{equation}{section}

\begin{document}
\title[Lipschitz regularity for a priori bounded minimizers]{Lipschitz regularity for a priori bounded minimizers of integral functionals with nonstandard growth}
\author[M. Eleuteri -- A. Passarelli di Napoli]{Michela Eleuteri -- Antonia Passarelli di Napoli}
\address{Dipartimento di Scienze Fisiche, Informatiche e Matematiche, Universit\`a degli Studi di Modena E Reggio Emilia, via Campi 213/b, 41125 Modena, Italy}

\email{michela.eleuteri@unimore.it}

\address{Dipartimento di Matematica e Applicazioni ``R. Caccioppoli''
\\
Universit\`a degli Studi di Napoli ``Federico II''\\
Via Cintia, 80126, Napoli (Italy)}

\email{antpassa@unina.it}

\maketitle

\begin{abstract}
We establish the Lipschitz regularity of the a priori bounded local minimizers of integral functionals with non autonomous energy densities satisfying non standard growth conditions under a sharp bound on the gap between the growth and the ellipticity exponent.
\end{abstract}

\noindent
{\footnotesize {\bf AMS Classifications.}  35B45, 35B65, 35J60, 49J40, 49N60}

\noindent
{\footnotesize {\bf Key words and phrases.}  Non-standard growth; Non-autonomous
functional;  Bounded minimizers.}

\bigskip

\section{Introduction}

Since the pioneering papers by P. Marcellini \cite{M89, M91}, the Lipschitz regularity for minimizers of integral functionals with non-standard growth and for weak solutions for the associated Dirichlet problem to the elliptic
system has attracted a lot of attention (see e.g. \cite{BM, BS2022, CMMPdN, DFM2021, EMM, EMM20, EMMP, EPdN23, M2021, M2022}).
\\
One of the main motivations comes from the applications, for instance to the theory of elasticity for strongly anisotropic materials (see Zhikov \cite{Z}, and also \cite{ZkO}); to this aim, in recent years the integral of the Calculus of Variations
\begin{equation}
\label{funz_modello}
\int_{\Omega} |Du|^p + a(x) |Du|^q \, dx,
\end{equation}
where the function $a = a(x)$ is  H\"older continuous with exponent $\alpha$ and where $1 < p < q,$ has been widely investigated from the point of view of the regularity of local minimizers. In particular M. Colombo and G. Mingione, (\cite{colmin}), studied the regularity of minimizers for integrals of the type \eqref{funz_modello} under the sharp gap
\begin{equation}
\label{cond_Mingione}
\frac{q}{p} < 1 + \frac{\alpha}{n}.
\end{equation}
On the other hand, M. Eleuteri, P. Marcellini and E. Mascolo (\cite{EMM}) investigated more general integrals of the Calculus of Variations of the type
\begin{equation}
\label{funzionaleconlag}
F(u) = \int_{\Omega} g(x, |Du|) \, dx
\end{equation}
where
\begin{equation}
\label{Hpdistruttura}
g(x, |Du|) = |Du|^p + a(x) \, |Du|^q
\end{equation}
is just a model example, without therefore assuming the precise structure condition for the integrand as in \eqref{funz_modello}; they proved the local Lipschitz continuity of the local minimizers and to the  solutions to the corresponding elliptic systems assuming a $\WW^{1,r}$ regularity on the coefficients and under the gap 
\begin{equation}\label{ipotesiconpqr}
\frac{q}{p} < 1 + \frac{1}{n} - \frac{1}{r}.
\end{equation}
\noindent In the model case  \eqref{Hpdistruttura}, the above condition \eqref{ipotesiconpqr} is equivalent to \eqref{cond_Mingione} by the Sobolev embedding with
\begin{equation}
\label{sobolev}
\alpha = 1 - \frac{n}{r}.
\end{equation}
On the other hand, it is well known that, when dealing with a priori bounded minimizers of functionals with  non standard growth, the regularity can be obtained under a bound on the gap independent of the dimension $n$ (\cite{Adimurthy, BF05, CKP, Choe, CMM14, ELM99, GenGioTor, L94}), see also \cite{HO} in the case of functionals with quasi isotropic $(p,q)-$growth. In particular, for the double phase functional, in \cite{colmin2}, 
the authors were able to prove that the a priori bounded local minimizers of integral functionals of kind \eqref{funz_modello} are $\mathcal{C}^{1,\beta}$-regular provided the sharp bound 
\begin{equation}
\label{sharp-a-priori-bound}
q \le p + \alpha
\end{equation}
holds.
\\
It is natural to ask if  the same phenomenon persists when the Lipschitz regularity of more general functionals of kind \eqref{funzionaleconlag} is investigated under an analogous a priori sharp bound on the gap between the exponents $p$ and $q.$
The main motivation comes from the fact that there are several interesting examples of functionals with non-standard growth and with Uhlenbeck structure that are not covered by the double-phase functional \eqref{funz_modello} or Orlicz-type functionals such as $g(t) = t^p \log (1 + t)$; for instance we refer to Remark 3.3 in \cite{BMS18} where an example of an integrand function exhibiting $p,q-$growth but not satisfying a $\Delta_2-$condition is presented. 
\\
Our paper aims to answer this open question, by studying the local Lipschitz continuity of the a priori bounded solutions to a class of variational  problems of the form
 \begin{equation}
\label{defint}
\min_{z \in \WW^{1,p}_{\loc}(\Omega; \mathbb{R}^N)}\int_\Omega F(x, Dz)\,dx,
\end{equation}
where $\Omega$ is a bounded open set of $\mathbb{R}^n$, $n \ge 2$.

\noindent We shall consider integrands $F$ such that $\xi \mapsto F(x, \xi)$ is $\mathcal{C}^2$ and   there exists $f:\Omega\times \mathbb{R}^{nN} \mapsto [0,+\infty)$ such that $F(x, \xi)=f(x,|\xi|)$. Such an assumption simplifies the approximation procedure that, even in the scalar case, can be quite involved (see for instance \cite{EMM20}). 
\\
We shall assume the following set of
conditions:

$$  \ell {(1 + |\xi|^2)^{\frac{p}{2}}} \le \, F(x,\xi) \le \, L {(1 + |\xi|^2)^{\frac{q}{2}}} \eqno{\mathrm{(F1)}}$$
$${\nu} (1 + |\xi|^2)^{\frac{p-2}{2}} |\lambda|^2 \le \, \sum_{i, \ell, \alpha, \beta} F_{\xi_i^{\alpha} \xi_{\ell}^{\beta}} (x,\xi) \lambda_i^{\alpha} \lambda_{\ell}^{\beta} \eqno{\mathrm{(F2)}}$$
$$ |F_{\xi_i^{\alpha}\xi_{\ell}^{\beta}}(x,\xi)|
\le \, \tilde{L} (1 + |\xi|^2)^{\frac{q-2}{2}} \eqno{\mathrm{(F3)}}$$
$$ |{F_{x \xi}}(x, \xi) | \le \, h(x) {(1 + |\xi|^2)}^{\frac{q-1}{2}} \eqno{\mathrm{(F4)}}$$
for almost all $x \in \Omega$,  and all $\xi, \lambda \in \mathbb{R}^{nN}$, $\xi = \xi_i^{\alpha}, \lambda = \lambda_{{\ell}}^{\beta},$ $i, \ell = 1, \dots, n,$ $\alpha, \beta = 1, \dots, N$, where $ 2\le p\le q$ and $0 \le {\nu} \le \tilde{L}$  are fixed constants,  and $h(x)\in \LL^r_{\mathrm{loc}}(\Omega)$ is a fixed non
negative function.
\\
Before stating our main result, we recall the definition of local minimizer
\begin{definition}
A mapping $u\in \WW^{1,1}_{\rm loc}(\Omega, \RN )$ is a local minimizer {of the integral functional \eqref{defint}} if
$F(x,Du) \in \LL^{1}_{\rm loc}(\Omega )$ and
\begin{equation}\label{minineq}
 \int_{\mathrm{supp}\varphi} \! F(x,Du) \, dx \leq \int_{\mathrm{supp}\varphi} \! F(x,Du+D\varphi) \, dx   
\end{equation}

for  any $\varphi\in {C}_0^{\infty}(\Omega, \RN )$.
\end{definition}

The main result reads as follows.

\begin{theorem}
\label{main}
Let $u\in \LL^{\infty}_{\mathrm{loc}}(\Omega; \mathbb{R}^N)\cap \WW^{1,p}_{\mathrm{loc}}(\Omega; \mathbb{R}^N)$ be a local minimizer of the functional \eqref{defint} under
the assumptions {\rm {(F1)--(F4)}}. Assume moreover that
\begin{equation}\label{erre}
   r>\max\{n,p+2\}  
\end{equation}
and
\begin{equation}\label{gap}
  q<p+1-\max\left\{\frac{n}r,\frac{p+2}{r} \right\}.
\end{equation} 
Then $u$ is locally Lipschitz continuous and the following estimate holds for any ball $B_{R_0} \Subset \Omega$
\[
{||Du||_{\LL^\infty \left(B_{\frac{R_0}{2}}; \mathbb{R}^{nN}\right)}\le  C\left( 1+||u||_{\LL^{\infty}(B_{R_0}; \mathbb{R}^N}) \right)^{\hat{\chi}}},
\]
{with $C \equiv C (n,N,  \nu, \tilde L, ||h||_{L^r(\Omega)}, R_0)$ and with a positive exponent $\hat{\chi}=\hat{\chi}(p,q,r,n)$.}
\end{theorem}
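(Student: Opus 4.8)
The plan is to run the by now classical four-step programme for functionals with $(p,q)$-growth, the decisive point being that the a priori bound $u\in\LL^\infty_{\rm loc}$ forces the key estimate to close under the \emph{dimension-free} gap \eqref{gap}: regularize; derive a second-order Caccioppoli inequality on the regularized minimizers; iterate it to a uniform gradient bound; pass to the limit. As a preliminary I would carry out a \emph{higher-integrability} step: for the bounded local minimizer $u$ one first establishes $Du\in\LL^{p+2}_{\rm loc}$, and then $Du\in\LL^{q}_{\rm loc}$ and beyond, by a Caccioppoli-type argument using only $\WW^{1,p}$-admissible competitors and exploiting $\|u\|_{\LL^\infty}$; this is what allows one to set up, free of Lavrentiev gap, an approximation whose regularized energies are bounded uniformly in $\varepsilon$. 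Heuristically the exponent $p+2$ in \eqref{gap} — and the standing assumption $r>p+2$ in \eqref{erre} — come precisely from the interaction between this intermediate integrability and the datum $h\in\LL^{r}$.

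\emph{Regularization.} Fix $B_{R_0}\Subset\Omega$. Using the radial structure $F(x,\xi)=f(x,|\xi|)$, I would mollify the $x$-dependence and add a non-degenerate term, setting $F_\varepsilon(x,\xi):=\bigl(f(\cdot,|\xi|)*\phi_\varepsilon\bigr)(x)+\varepsilon(1+|\xi|^2)^{q/2}$. Then $F_\varepsilon$ is $\CC^2$, strictly convex, satisfies {\rm (F1)--(F4)} with constants independent of $\varepsilon$, and is in addition uniformly $q$-elliptic (with $\varepsilon$-dependent constants); hence the unique minimizer $\ue$ of $v\mapsto\int_{B_{R_0}}F_\varepsilon(x,Dv)\,dx$ over $v\in u^0_\varepsilon+\WW^{1,q}_0(B_{R_0};\RN)$, with $u^0_\varepsilon$ a mollification of $u$, lies in $\WW^{2,2}_{\rm loc}\cap\CC^{1,\a}_{\rm loc}(B_{R_0})$ by the standard theory for smooth standard-growth integrands, so its Euler--Lagrange system may legitimately be differentiated. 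Comparing $\ue$ with its projection onto the closed ball of radius $\|u^0_\varepsilon\|_\infty$ — which, by the radial monotonicity of $f_\varepsilon$, does not increase the energy — and using uniqueness yields $\|\ue\|_{\LL^\infty(B_{R_0})}\le\|u\|_{\LL^\infty(B_{R_0})}+o(1)$, while minimality and the preliminary higher integrability give $\sup_\varepsilon\|D\ue\|_{\LL^p(B_{R_0})}<\infty$, itself dominated by a power of $1+\|u\|_{\LL^\infty(B_{R_0})}$.

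\emph{The second-order estimate.} This is the heart of the matter. Differentiating the system for $\ue$ in direction $x_s$, testing with $\eta^2(1+|D\ue|^2)^\gamma D_s\ue$ ($\gamma\ge0$), summing over $s$ and using {\rm (F2)--(F3)} gives, with $\mu:=1+|D\ue|^2$,
\begin{equation*}
\int\eta^2\,\mu^{\frac{p-2}{2}+\gamma}\,|D(D\ue)|^2\,dx\ \le\ C\int|D\eta|^2\,\mu^{\frac q2+\gamma}\,dx\ +\ C\int\eta^2\,h\,\mu^{\frac{q-1}{2}+\gamma}\,|D(D\ue)|\,dx,
\end{equation*}
the last term coming from {\rm (F4)}. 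Absorbing it by a crude Young inequality against the left-hand side would require $2(q-1)\le p+2\gamma+\dots$ and so impose a gap \emph{depending on $n$}; instead one integrates by parts so as to transfer one derivative onto $\ue$ itself and bounds the emerging factor by $\|\ue\|_\infty$, reducing the non-autonomous term to (a) a piece of the type $\|\ue\|_\infty\int\eta\,h\,\mu^{\frac q2+\gamma}|D\eta|\,dx$, handled by H\"older in $\LL^{r}$ and Sobolev, and (b) a genuinely second-order remainder reabsorbed into the left-hand side via Young and H\"older with exponents $r,r'$. It is here that the two competitors in \eqref{gap} originate: the Sobolev gain applied to $\mu^{\frac p2+\gamma}$ is governed by a constraint $q<p+1-\tfrac nr$ — the natural analogue for general integrands of the sharp bound $q\le p+\alpha$ for bounded double-phase minimizers (cf.\ \eqref{sharp-a-priori-bound}), with $\a=1-\tfrac nr$ (cf.\ \eqref{sobolev}) — while the absorption of the remainder brings in the intermediate exponent $p+2$ and yields $q<p+1-\tfrac{p+2}{r}$. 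I expect this absorption/interpolation bookkeeping to be the principal technical obstacle.

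\emph{Iteration and passage to the limit.} Combining the second-order estimate with the Sobolev inequality $\WW^{1,2}\hookrightarrow\LL^{2^*}$ ($2^*=\tfrac{2n}{n-2}$, or any large finite exponent when $n=2$) produces, for every admissible $\gamma$, a self-improving inequality controlling $\bigl(\int_{B_\rho}\mu^{(\frac p2+\gamma)\frac{n}{n-2}}\,dx\bigr)^{\frac{n-2}{n}}$ by $(R-\rho)^{-2}\int_{B_R}\mu^{\frac p2+\gamma}\,dx$ for $B_\rho\subset B_R\subset B_{R_0}$, with constant depending on the data and on $1+\|u\|_\infty$ but not on $\varepsilon$; by \eqref{gap} the exponents $\gamma_k\uparrow\infty$ can be chosen so that the gain factor stays bounded away from $1$, and a Moser iteration over balls nested between $B_{R_0/2}$ and $B_{R_0}$ gives $\|D\ue\|_{\LL^\infty(B_{R_0/2})}\le C(1+\|u\|_{\LL^\infty(B_{R_0})})^{\hat\chi}$ uniformly in $\varepsilon$, the dependence of the iteration constants on $\|u\|_\infty$ compounding through the geometric series into the single exponent $\hat\chi=\hat\chi(p,q,r,n)$ and the uniform $\LL^p$-bound on $D\ue$ having been reabsorbed into the right-hand side. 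Finally, along a subsequence $D\ue\weakstar Dw$ in $\LL^\infty(B_{R_0/2})$ and $\ue\to w$ in $\LL^p$; since the gradients are uniformly bounded on $B_{R_0/2}$, dominated convergence gives $F_\varepsilon(x,D\ue)\to F(x,Dw)$ in $\LL^1$, so $w$ is a local minimizer of \eqref{defint} sharing the boundary values of $u$ on $B_{R_0}$, whence $w=u$ by convexity of $\xi\mapsto F(x,\xi)$ (strict when $\nu>0$; by a standard uniqueness argument otherwise). Therefore $Du\in\LL^\infty(B_{R_0/2})$ and satisfies the asserted estimate.
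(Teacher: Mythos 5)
Your overall architecture is the right one and matches the paper's in spirit (approximate by standard-growth problems while keeping an $\LL^\infty$-type bound, prove a second-order Caccioppoli inequality with constants independent of the approximation, use the boundedness to push the gradient integrability towards $p+2$, run a Moser iteration, pass to the limit), and your truncation-by-projection argument for $\|u_\varepsilon\|_\infty$ is legitimate for the Uhlenbeck structure and is a genuine alternative to the paper's penalization term $(|v|^2-a^2)^m_+$. However, there is a genuine gap at the foundation of your scheme: the preliminary claim that $Du\in\LL^{p+2}_{\rm loc}$, then $\LL^{q}_{\rm loc}$ ``and beyond'', can be established for $u$ itself ``by a Caccioppoli-type argument using only $\WW^{1,p}$-admissible competitors''. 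The interpolation that converts $\|u\|_\infty$ into $\LL^{p+2}$-integrability of the gradient (the Gagliardo--Nirenberg inequality, Lemma \ref{gagnir} here) has $\int\eta^2|Du|^{p-2}|D^2u|^2$ on its right-hand side, i.e.\ it needs second derivatives, which $u$ is not known to possess; zero-order Caccioppoli/Gehring arguments with $\WW^{1,p}$ competitors do not produce a jump from $p$ to $p+2$, let alone to $q$ in the full range \eqref{gap}. Since your regularization (mollify the $x$-dependence, add $\varepsilon(1+|\xi|^2)^{q/2}$, minimize in $u^0_\varepsilon+\WW^{1,q}_0$) needs precisely this information to guarantee that the comparison energies converge, i.e.\ that $\varepsilon\int(1+|Du^0_\varepsilon|^2)^{q/2}\to0$ and that the mollified $x$-dependence can be commuted back under only $h\in\LL^r$, the plan is circular: it presupposes integrability of the type the theorem is meant to deliver. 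The paper avoids this Lavrentiev-type issue by approximating $F$ monotonically \emph{from below} (Lemma \ref{apprcupguimas2parte}), so that $u$ itself is an admissible competitor and no recovery sequence is needed, and the boundedness enters only through the $\LL^{2m}$ bounds on the approximating minimizers in Lemma \ref{lemhigh}, never through an a priori statement about $u$.

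Two further soft spots. First, in the passage to the limit, dominated convergence on $B_{R_0/2}$ (where the uniform gradient bound holds) does not show that the limit $w$ minimizes on $B_{R_0}$ nor allow the comparison with $u$; for the identification $w=u$ you need $\limsup_\varepsilon\int_{B_{R_0}}F_\varepsilon(x,Du_\varepsilon)\,dx\le\int_{B_{R_0}}F(x,Du)\,dx$, i.e.\ exactly the boundary/recovery estimate your scheme leaves unproven, whereas with the monotone approximation this follows at once from $F^j\le F$. Second, the quantitative heart of the proof is only announced: the Caccioppoli inequality still carries the powers $(1+|Du|^2)^{\frac{2q-p+\gamma}{2}}h^2$ and $(1+|Du|^2)^{\frac{q+\gamma}{2}}$ on the right, and closing the Moser iteration requires (i) the uniform higher integrability $|Du_j|\in\LL^{\mathfrak m_r(p-q+1)}$ of Lemma \ref{lemhigh}, where the competitor $q<p+1-\frac{p+2}{r}$ enters through the choice of $\gamma>0$, and (ii) the verification, as in \eqref{disgoal}, that the resulting exponent of $\|V(Du_j)\|_{\LL^\infty}$ is strictly less than one, where $q<p+1-\frac nr$ enters. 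You correctly identify these as the two sources of \eqref{gap}, but you do not carry out the bookkeeping, and without it the iteration does not close; this, together with the unproven preliminary integrability, means the proposal as written does not yet constitute a proof.
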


We observe that condition \eqref{gap} not only reduces to \eqref{sharp-a-priori-bound} under the Sobolev embedding with \eqref{sobolev} for $p < n - 2,$ but also includes the case $p < n < p + 2:$ indeed the a priori higher integrability $\LL^{p+2}$ reveals to be crucial in order to weaken the assumption on the coefficients  
 in the non-autonomous case.

The proof of this result goes along several steps. The first step is devoted to the construction of the approximating problems in Section \ref{tre.due} based on the approximation lemma stated in Section \ref{tre.uno}; the main feature here is that the approximating local minimizers have norm in a suitable Lebesgue space which is uniformly bounded by the $\LL^{\infty}$ norm of the local minimizer $u$. This procedure, inspired by \cite{CKP} and already used in a similar form in \cite{GPdN}, is one of the main and delicate points of our arguments. Indeed, in the general vectorial setting, the a priori boundedness of the minimizer of the original functional does not imply the boundedness of the approximating minimizers. However,  this construction  complicates the form of the integrand function of the approximating functionals and, despite they satisfy
standard growth conditions with respect to the gradient variable, the growth with respect to the $u$ variable in our energy density yields the necessity to establish the Lipschitz regularity of the approximating minimizers in Section \ref{tre.tre}; the proof of this result relies on a classical Moser iteration argument and makes use of a preliminary higher differentiability and higher integrability result proven in \cite{GPdN}.
The next step 
aims to prove, in Section \ref{tre.quattro},  a second order Caccioppoli type inequality
for the approximating minimizers; the main point
here is that we are going to establish it with constants independent of the approximation parameters. In a further step, in Section \ref{quattro}, by using a  Gagliardo-Nirenberg type inequality (\cite{CKP}), we establish a uniform higher integrability result for the approximating minimizers, with constants independent of the parameter of the approximation. Finally we are ready to prove in Section \ref{cinque} the main result of the paper, that will be divided in
two steps. In the first one we establish an uniform a priori estimate for the $\LL^{\infty}$ norm
of the gradient of the minimizers of the approximating functionals while, in the second, we show that these estimates are preserved in passing to the limit.

We conclude by mentioning that, as a consequence of the Lipschitz regularity of the local minimizers, we are also able to obtain a second order regularity result. More precisely, we have the following:

\begin{theorem}
\label{highdiff}
Let $u\in \LL^{\infty}_{\mathrm{loc}}(\Omega; \mathbb{R}^N)\cap \WW^{1,p}_{\mathrm{loc}}(\Omega; \mathbb{R}^N)$ be a local minimizer of the functional \eqref{defint} under
the assumptions {\rm {(F1)--(F4)}}. Assume moreover that
\eqref{erre}
   and \eqref{gap}
  are in force.
Then $u\in W^{2,2}_{\mathrm{loc}}(\Omega; \mathbb{R}^N)$  and the following estimate holds for any ball $B_{R_0} \Subset \Omega$
\[
{||D^2u||_{\LL^2 \left(B_{\frac{R_0}{2}}; \mathbb{R}^{n^2N}\right)}\le  C\left( 1+||u||_{\LL^{\infty}(B_{R_0}; \mathbb{R}^N}) \right)^{\hat{\chi}}}
\]
{with $C \equiv C (n,N,  \nu, \tilde L, ||h||_{L^r(\Omega)}, R_0)$ and with a positive exponent $\hat{\chi}=\hat{\chi}(p,q,r,n)$.}
\end{theorem}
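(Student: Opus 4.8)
\noindent\textbf{Proof strategy for Theorem~\ref{highdiff}.}
The whole point is that Theorem~\ref{main} reduces the second order regularity to a \emph{locally uniformly elliptic} question. Fix $B_{R_0}\Subset\Omega$ and put $M:=\|Du\|_{\LL^\infty(B_{R_0/2};\mathbb R^{nN})}$; by Theorem~\ref{main} this is finite and $M\le C\big(1+\|u\|_{\LL^\infty(B_{R_0};\RN)}\big)^{\hat\chi}$. On the range $|\xi|\le M$, assumption {\rm (F2)} provides a lower ellipticity bound $\ge\nu|\lambda|^2$ (here $p\ge 2$ is used), {\rm (F3)} bounds the Hessian of $F$ by a constant $C(M)$, and {\rm (F4)} bounds $|F_{x\xi}(x,Du)|$ by $C(M)\,h(x)$ with $h\in\LL^2_{\mathrm{loc}}(\Omega)$ since $r>2$ by \eqref{erre}. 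So I would reduce the statement to the by now classical interior $\WW^{2,2}$ estimate for a minimizer with bounded gradient of a $\CC^2$, locally uniformly elliptic functional whose $x$-dependence is controlled in $\LL^2_{\mathrm{loc}}$.

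Concretely, two equivalent routes are available, and I would pick the one matching the approximation scheme already in place. \emph{(a) Difference quotients on $u$ itself.} Since $Du\in\LL^\infty_{\mathrm{loc}}$ and $F\in\CC^2$, the map $x\mapsto F_\xi(x,Du(x))$ is locally bounded, so $u$ solves the Euler--Lagrange system $\int_\Omega\sum_{i,\alpha}F_{\xi_i^\alpha}(x,Du)\,D_i\varphi^\alpha\,dx=0$ for every $\varphi\in\WW^{1,2}(\Omega;\RN)$ with compact support. Testing with $\varphi=\tau_{-h,s}\big(\eta^2\,\tau_{h,s}u\big)$, where $\tau_{h,s}$ denotes the increment in the direction $e_s$ and $\eta\in\CC_0^\infty(B_{R_0})$ is a cut-off with $\eta\equiv 1$ on $B_{R_0/2}$, writing $\tau_{h,s}[F_\xi(\cdot,Du)]$ as the sum of an interpolated--Hessian term and an $x$--increment term, using {\rm (F2)} and {\rm (F3)} to make the principal part uniformly elliptic with constants depending only on $M$ (legitimate because $|Du|\le M$ a.e. on $\mathrm{supp}\,\eta$ for $|h|$ small), {\rm (F4)} together with Young's inequality to absorb the remaining terms, and the elementary bound $|\tau_{h,s}u|\le M$ on $\mathrm{supp}\,\eta$, one arrives at $\int_{B_{R_0/2}}|\tau_{h,s}Du|^2\,dx\le C\big(M,\|h\|_{\LL^2(B_{R_0})},R_0\big)$ uniformly in $s$ and in $|h|$ small; letting $h\to 0$ yields $u\in\WW^{2,2}_{\mathrm{loc}}(\Omega;\RN)$ together with the claimed bound on $\|D^2u\|_{\LL^2(B_{R_0/2})}$. \emph{(b) Via the approximating minimizers.} The second order Caccioppoli inequality of Section~\ref{tre.quattro} holds for the approximating minimizers $u_\eps$ with constants independent of $\eps$ and controls $\int_{B_{R_0/2}}(1+|Du_\eps|^2)^{\frac{p-2}{2}}|D^2u_\eps|^2\,dx$ by an expression involving $\|Du_\eps\|_{\LL^\infty(B_{R_0})}$ and $\|h\|_{\LL^2(B_{R_0})}$; since the uniform Lipschitz estimate of Section~\ref{cinque} bounds $\|Du_\eps\|_{\LL^\infty(B_{R_0/2})}$ by $C(1+\|u\|_{\LL^\infty(B_{R_0})})^{\hat\chi}$ and since $p\ge 2$ makes the weight $\ge 1$, the sequence $D^2u_\eps$ is bounded in $\LL^2(B_{R_0/2})$ uniformly in $\eps$; the (strong) $\WW^{1,p}_{\mathrm{loc}}$ convergence $u_\eps\to u$ underlying the limit passage of Section~\ref{cinque} then identifies the weak $\LL^2$ limit of $D^2u_\eps$ with $D^2u$, and lower semicontinuity of the $\LL^2$ norm gives the estimate.

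In either case the final inequality is obtained by substituting $M\le C(1+\|u\|_{\LL^\infty(B_{R_0})})^{\hat\chi}$ from Theorem~\ref{main} into the right-hand side, at the harmless cost of enlarging the exponent, which we keep denoting by $\hat\chi$. I do not expect a serious obstacle here: the analytic substance is entirely in Theorem~\ref{main}. The only genuine technical points are (i) the admissibility of the Euler--Lagrange system and of the difference-quotient test function under the sole information $Du\in\LL^\infty_{\mathrm{loc}}$ --- routine once the Lipschitz bound is available, and in any case bypassed by route~(b); and (ii) in route~(b), tracking carefully that the right-hand side of the second order Caccioppoli inequality carries positive powers of $1+|Du_\eps|$ and the factor $h$, so that its uniform boundedness rests precisely on the uniform Lipschitz estimate and on $h\in\LL^r_{\mathrm{loc}}$ with $r>2$, i.e. ultimately on \eqref{erre}--\eqref{gap} through Theorem~\ref{main}.
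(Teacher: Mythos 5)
Your route (b) is essentially the paper's own proof: the authors apply the second order Caccioppoli inequality of Lemma~\ref{Caccio} with $\gamma=0$ to the approximating minimizers $u_j$, use $p\ge 2$ so that $(1+|Du_j|^2)^{\frac{p-2}{2}}\ge 1$, control the right-hand side via the uniform Lipschitz bound of Step~1 of Section~\ref{cinque} and $h\in\LL^r_{\mathrm{loc}}$ with $r>2$, and then pass to the limit $j\to\infty$ using the strong $\WW^{1,p}_{\mathrm{loc}}$ convergence $u_j\to u$ to identify $D^2u$ and conclude by weak lower semicontinuity. Your alternative route (a) by difference quotients on $u$ itself is a reasonable variant but is not what the paper does; since you indicate you would follow the approximation scheme already in place, the proposal matches the paper's argument.
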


\section{Preliminary}
In what follows, we shall denote by $C$  a
general positive constant that may vary on different occasions, even within the
same line of estimates.
Relevant dependencies  will be suitably emphasized using
parentheses or subscripts.  The symbol $B(x,r)=B_r(x)=\{y\in \R^n:\,\, |y-x|<r\}$ will denote the ball centered at $x$ of radius $r$.
\\
We recall the following well known iteration lemma, whose
proof can be found, e.g. in \cite[Lemma 6.1, p.191]{Giusti}.

  \begin{lemma}\label{lem:Giaq}
    For $0<R_1<R_2$, consider a bounded function
    $f:[R_1,R_2]\to[0,\infty)$ with
    \begin{equation*}
      f(r_1)\le\vartheta f(r_2)+\frac A{(r_2-r_1)^\alpha}+ \frac B{(r_2-r_1)^\beta}+ C
       \qquad\mbox{for all }R_1<r_1<r_2<R_2,
    \end{equation*}
    where $A,B,C$, and $\alpha,\beta$ denote nonnegative constants
    and $\vartheta\in(0,1)$. Then we have
    \begin{equation*}
      f(R_1)\le c(\alpha,\vartheta)
      \bigg(\frac A{(R_2-R_1)^\alpha}+\frac B{(R_2-R_1)^\beta}+C
      \bigg).
    \end{equation*}
  \end{lemma}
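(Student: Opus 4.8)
The plan is to iterate the assumed inequality along a geometric sequence of radii increasing to $R_2$, and then to exploit $\vartheta\in(0,1)$ to sum the resulting geometric series. We may assume without loss of generality that $\beta\le\alpha$, interchanging the roles of the two fractional terms otherwise.

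First I would fix a parameter $\tau\in(0,1)$, to be chosen at the end, and set $\rho_i:=R_1+(1-\tau^i)(R_2-R_1)$ for integers $i\ge0$, so that $\rho_0=R_1$, $\rho_i\nearrow R_2$, and $\rho_{i+1}-\rho_i=\tau^i(1-\tau)(R_2-R_1)$. Applying the hypothesis with $r_1=\rho_i<r_2=\rho_{i+1}$ gives
\[
f(\rho_i)\le \vartheta\, f(\rho_{i+1}) + \frac{A}{\tau^{i\alpha}(1-\tau)^\alpha(R_2-R_1)^\alpha} + \frac{B}{\tau^{i\beta}(1-\tau)^\beta(R_2-R_1)^\beta} + C .
\]

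Next, iterating this estimate $k$ times from $i=0$ and using $f(\rho_k)\le\sup_{[R_1,R_2]}f<\infty$ together with $\vartheta^k\to0$, I would obtain, after letting $k\to\infty$,
\[
f(R_1)\le \sum_{i=0}^{\infty}\vartheta^i\left( \frac{A}{\tau^{i\alpha}(1-\tau)^\alpha(R_2-R_1)^\alpha} + \frac{B}{\tau^{i\beta}(1-\tau)^\beta(R_2-R_1)^\beta} + C\right).
\]
At this point one chooses $\tau=\tau(\alpha,\vartheta)\in(0,1)$ close enough to $1$ that $\vartheta\,\tau^{-\alpha}<1$, which is possible since $\tau^{-\alpha}\to1$ as $\tau\to1^-$ and $\vartheta<1$; because $\beta\le\alpha$ and $\tau<1$, also $\vartheta\,\tau^{-\beta}\le\vartheta\,\tau^{-\alpha}<1$. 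Then the three series $\sum_i(\vartheta\tau^{-\alpha})^i$, $\sum_i(\vartheta\tau^{-\beta})^i$ and $\sum_i\vartheta^i$ all converge with sums depending only on $\alpha$ and $\vartheta$, and the conclusion follows with $c(\alpha,\vartheta)$ the largest of the resulting constants.

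I do not expect any genuine obstacle. The only point demanding attention is the order of the choices in the last step: the ratio $\tau$ of the geometric sequence of radii must be fixed \emph{after} $\vartheta$ and $\alpha$, so that the factor $\vartheta\,\tau^{-\alpha}$ produced at each iteration — the loss $\vartheta$ from the hypothesis against the blow-up $\tau^{-\alpha}$ of the denominators — remains strictly below $1$ and the geometric series converge; this is exactly what makes the final constant depend only on $\alpha$ and $\vartheta$ and not on $R_1,R_2,A,B,C$.
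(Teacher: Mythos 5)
The paper does not prove this lemma at all --- it simply cites \cite[Lemma 6.1]{Giusti} --- and your argument is precisely the standard proof found there: a geometric sequence of radii with ratio $\tau$ fixed \emph{after} $\vartheta$ and $\alpha$ so that $\vartheta\tau^{-\alpha}<1$, followed by iteration and summation of the resulting geometric series. Your proof is correct (in particular the reduction to $\beta\le\alpha$ does let you absorb the $B$-term's constant $\bigl((1-\tau)^{\beta}(1-\vartheta\tau^{-\beta})\bigr)^{-1}$ into $c(\alpha,\vartheta)$, since both factors are dominated by the corresponding ones with exponent $\alpha$), so nothing further is needed.
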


\section{The approximation}

\label{tre}

\subsection{An Approximation Lemma}

\label{tre.uno}

In this subsection we will state a Lemma that will be the main tool in the approximation procedure. For the proof we refer to Proposition 4.1 in \cite{CupGuiMas} ( see also   \cite[Lemma 4.1]{CGGP} and the recent  \cite[Theorem 5.1]{DeFilLeo}).

 \begin{lemma}
\label{apprcupguimas2parte}
  Let $F:\Omega\times\R^{nN}\to [0,+\infty)$   be a
Carath\'eodory function 
satisfying assumptions \textnormal{(F1)--(F4)}. Then there exists a sequence
 of Carath\'eodory functions
$F^{j}:\Omega\times\R^{nN}\to [0,+\infty)$, monotonically convergent to $F$,
such that the following properties hold for a.e. $x\in \Omega$ and for every  $\xi
      \in
  \R^{nN}$:
    \begin{equation}
         \label{(II)}
 F^{j}(x,\xi)\le F^{j+1}(x,\xi)\le F(x,\xi)\qquad \forall  j\in \mathbb{N}
     \end{equation}
   \begin{equation}
  \label{p-p-1approx}
\begin{cases}
 K_{0}(|\xi|^{p}-1)\le F^{j}(x,\xi)\le L(1+|\xi|)^{q}\\
  F^{j}(x,\xi)\le K_{1}(j)(1+|\xi|)^{p},   
\end{cases} 
\end{equation}
with positive constants  $K_{0}=K_{0}(\ell)$ and
 $K_{1}(j)$.
In addition for every  $\xi\in
  \R^{nN}$, there hold 
\begin{equation}
   \label{(I)} F^{j}(x,\xi)={\tilde
 F}^{j}(x,|\xi|),  \qquad t\mapsto {\tilde
 F}^{j}(x,t) \ \ \text{ nondecreasing},
  \end{equation}
  \begin{equation}
  \label{(III)}
   \sum_{i, \ell, \alpha, \beta} F^j_{\xi_i^{\alpha} \xi_{\ell}^{\beta}} (x,\xi)\lambda_i^{\alpha} \lambda_{\ell}^{\beta} \ge {\overline \nu}(1+|\xi|^2)^{\frac{p-2}{2}}|\lambda|^2\qquad \forall \lambda, {\xi}\in
 \R^{nN},\end{equation}with  ${\overline \nu}=\overline \nu(\nu,p)>0$.
 We also have
 \begin{equation}
 \label{(VII)}
\begin{cases}
|F^j_{\xi \xi}(x,\xi)|\le C(j)(1+|\xi|^2)^{\frac{p-2}{2}}
\\
|F^j_{\xi \xi}(x,\xi)|\le C(\tilde L)(1+|\xi|^2)^{\frac{q-2}{2}}.    
\end{cases}
\end{equation}
Moreover,
 the vector field $x \mapsto F^{j}_{\xi}(x,\xi)$ is
weakly
 differentiable and,  for  every   $\xi\in
  \R^{nN}$,
 \begin{equation}
 \label{(V)}
\begin{cases}
   |F^j_{x \xi}(x,\xi)|\le C(j)h(x)(1+|\xi|^2)^{\frac{p-1}{2}}
\\
|F^j_{x \xi}(x,\xi)|\le Ch(x)(1+|\xi|^2)^{\frac{q-1}{2}}. 
\end{cases}
\end{equation}

\end{lemma}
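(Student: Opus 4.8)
The plan is to construct the sequence $(F^j)$ \emph{fibrewise in the gradient variable}, exploiting the radial structure $F(x,\xi)=\tilde F(x,|\xi|)$: the idea is to truncate the superlinear ($q$--type) growth of $F$ at a scale $\sim j$ while keeping both the $p$--ellipticity from below and the dependence on $x$ under control. First I would record the one--dimensional translation of \textnormal{(F1)--(F4)}: convexity of $\xi\mapsto F(x,\xi)$ together with radiality makes $t\mapsto\tilde F(x,t)$ convex, nondecreasing and such that $\partial_t\tilde F(x,0)=0$, while the two ``radial eigenvalues'' $\partial_t^2\tilde F(x,t)$ and $t^{-1}\partial_t\tilde F(x,t)$ lie in $[\,\nu(1+t^2)^{\frac{p-2}{2}},\,c\,\tilde L(1+t^2)^{\frac{q-2}{2}}\,]$, and $|\partial_x\partial_t\tilde F(x,t)|\le c\,h(x)(1+t^2)^{\frac{q-1}{2}}$; all these quantities are measurable in $x$.

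Next, for each $j\in\mathbb N$, I would set $\tilde F^j(x,\cdot):=\tilde F(x,\cdot)$ on $[0,j]$ and, for $t>j$,
\[
\tilde F^j(x,t):=\tilde F(x,j)+\partial_t\tilde F(x,j)\,(t-j)+\bar\nu\,\Big[(1+t^2)^{\frac p2}-(1+j^2)^{\frac p2}-p\,j\,(1+j^2)^{\frac{p-2}{2}}(t-j)\Big],
\]
i.e.\ the $C^1$ continuation of the data $\big(\tilde F(x,j),\,\partial_t\tilde F(x,j)\big)$ by an $x$--\emph{independent} convex tail of pure $p$--growth, where $\bar\nu=\bar\nu(\nu,p)$ is chosen so small that $\bar\nu\,\tfrac{d^2}{dt^2}(1+t^2)^{p/2}\le\nu(1+t^2)^{\frac{p-2}{2}}$ for every $t$; then $F^j(x,\xi):=\tilde F^j(x,|\xi|)$. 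The decisive feature is that for $t>j$ the tail depends on $x$ only through $\tilde F(x,j)$ and $\partial_t\tilde F(x,j)$, which is precisely what \textnormal{(F4)} controls.

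Then I would verify the listed properties; the bulk is routine once the construction is in place. Since $F^j=F$ on $\{|\xi|\le j\}$, on every compact set the sequence is eventually equal to $F$, which gives the monotone convergence $F^j\nearrow F$ at once, and comparing second derivatives (the choice of $\bar\nu$) gives $F^j\le F^{j+1}\le F$, in particular $\tilde F^j\le\tilde F$. On $\{|\xi|\le j\}$ all estimates are inherited from $F$, turning the $q$--weights into $j$--dependent $p$--weights via $(1+|\xi|^2)^{\frac{q-p}2}\le(1+j^2)^{\frac{q-p}2}$; this already yields the $j$--dependent bounds for $F^j$, for $F^j_{\xi\xi}$ and for $F^j_{x\xi}$ there, as well as the uniform $q$--bounds $F^j\le L(1+|\xi|)^q$ and $|F^j_{\xi\xi}|\le C(\tilde L)(1+|\xi|^2)^{\frac{q-2}{2}}$. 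On $\{|\xi|>j\}$ the explicit tail gives uniform coercivity from below, since both radial eigenvalues of $D^2_\xi F^j$ are comparable to $\bar\nu(1+|\xi|^2)^{\frac{p-2}{2}}$ independently of $j$; the bound $F^j\le\tilde F\le L(1+|\xi|)^q$; the global $p$--growth $F^j\le K_1(j)(1+|\xi|)^p$ on integrating the tail twice; the two--sided $q$--Hessian bound; and, because $F^j_{x\xi}(x,\xi)=\partial_x\partial_t\tilde F(x,j)\otimes\tfrac{\xi}{|\xi|}$ there, the mixed--derivative bounds, namely $|F^j_{x\xi}(x,\xi)|=|\partial_x\partial_t\tilde F(x,j)|\le c\,h(x)(1+j^2)^{\frac{q-1}2}\le c\,h(x)\min\{(1+j^2)^{\frac{q-p}2}(1+|\xi|^2)^{\frac{p-1}2},\,(1+|\xi|^2)^{\frac{q-1}2}\}$. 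Weak $x$--differentiability of $F^j_\xi$ is then inherited from that of $F_\xi$ restricted to the sphere $\{|\xi|=j\}$.

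The real obstacle is exactly this surgery for $t>j$: one extension must reconcile four competing requirements at once — ellipticity from below \emph{with a constant independent of $j$}, a Hessian controlled from above by the $q$--weight (hence uniformly in $j$), genuine global $p$--growth (necessarily with a blowing--up constant $K_1(j)$), and no deterioration of the $x$--regularity beyond what \textnormal{(F4)} provides. In particular, matching $\partial_t^2\tilde F(x,j)$ at $t=j$ — the natural route to a genuinely $\mathcal C^2$ integrand in $\xi$ — would drag third--order information on $F$ into the estimates, which is unavailable; this is why one freezes the principal part of the tail to be $x$--free and settles for a $\mathcal C^{1,1}$ integrand in $\xi$ (its Hessian bounds holding a.e., which is all that is used downstream), or upgrades to $\mathcal C^2$ by a further innocuous smoothing of the corner at $t=j$. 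The careful implementation of all these points, together with the measurability bookkeeping, is carried out in \cite[Proposition~4.1]{CupGuiMas}; see also \cite[Lemma~4.1]{CGGP} and \cite[Theorem~5.1]{DeFilLeo}.
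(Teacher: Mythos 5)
The paper does not actually prove this lemma: it is quoted from the literature, the proof being attributed to \cite[Proposition 4.1]{CupGuiMas} (see also \cite[Lemma 4.1]{CGGP} and \cite[Theorem 5.1]{DeFilLeo}), so there is no internal argument to compare yours against line by line. Your sketch is, in substance, the standard construction behind those references: keep $F$ on $\{|\xi|\le j\}$ and continue the radial profile past $t=j$ by a convex tail of $p$-growth matched to first order, then read the bounds off (F1)--(F4) on $\{|\xi|\le j\}$ and off the explicit tail on $\{|\xi|>j\}$. The verifications you outline are sound: monotonicity follows from comparing radial second derivatives together with the matching of value and slope at the junctions; the choice $\bar\nu\le \nu/(p(p-1))$ gives the uniform lower ellipticity \eqref{(III)} (note that on the tail the tangential eigenvalue $\partial_t\tilde F^j/t$ also needs the lower bound $\partial_t\tilde F(x,j)\ge \nu j(1+j^2)^{\frac{p-2}{2}}$, which your setup provides); the $j$-dependent $p$-bounds and the uniform $q$-bounds follow as you say; and freezing the principal coefficient of the tail to be $x$-independent makes \eqref{(V)} immediate since $F^j_{x\xi}$ on the tail reduces to $\partial_x\partial_t\tilde F(x,j)$ tensored with $\xi/|\xi|$. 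A cosmetic difference from the cited construction is precisely this $x$-free tail (the reference matches an $x$-dependent multiple of a $p$-power instead), and your $K_0$ then depends on $\ell,\nu,p$ rather than on $\ell$ alone, which is harmless. The one genuine caveat is the one you flag yourself: the matched extension is only $\mathcal C^{1,1}$ across $\{|\xi|=j\}$, whereas \eqref{(VII)} and the later differentiation of the Euler--Lagrange system use pointwise Hessian information; to obtain literally the stated lemma one must carry out the smoothing of the corner (or systematically work with a.e.\ Hessian bounds), and a complete proof would have to perform that regularization and re-check the listed estimates, as is done in the quoted references.
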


\subsection{The approximating problems}

\label{tre.due}

\noindent Here we present the construction of the approximating problems that is inspired by the one in  \cite{CKP} and whose main feature is that the sequence of the approximating minimizers has norm in a suitable Lebesgue space uniformly bounded by the $\LL^\infty$ norm of the minimizer $u$.

\medskip
Fix a compact set  $\Omega'\Subset \Omega$ and    a real number $a\ge ||u||_{\LL^\infty(\Omega^\prime; \mathbb{R}^N)}$. For $m\in \mathbb{N},$ 
 let ${\uj} \in  \WW^{1,p}(\Omega^{\prime}; \mathbb{R}^N)\cap \LL^{2m}(\Omega^{\prime}; \mathbb{R}^N)$
be a minimizer to the functional
\begin{equation}\label{fun}
\mathfrak{F}^{j}(v,\Omega^{\prime})= \int_{\Omega^{\prime}} \! \Bigl( F^j(x,Dv) + \bigl( |v|^2-a^2 \bigr)^{m}_{+}  \Bigr)\,dx
\end{equation}
 under the boundary condition
 $$ \uj=u \qquad \text{on}	\qquad \partial\Omega^\prime,$$
 and where $F^j$ is the sequence of functions obtained applying Lemma \ref{apprcupguimas2parte} to the integrand $F$ of the functional at \eqref{defint}.
 \\
The existence of $\uj$ is easily established by the direct methods of the Calculus of Variation. 
We shall need the following

\begin{lemma}\label{lem62}
As $j \to +\infty$, we have that 
$$
\int_{\Omega^{\prime}} \! \Bigl( \bigl( |\uj |-a \bigr)^{2m}_{+}
\Bigr) \, dx \to 0, \quad  \quad
\int_{\Omega^{\prime}} \! F^j(x,D\uj ) \, dx\to \int_{\Omega^{\prime}} \! F(x,Du) \, dx.
$$
and
\begin{center}
$D\uj \to Du \quad$ strongly in \;\;\; $\LL^p(\Omega^{\prime}; \mathbb{R}^{nN})$.
\end{center}
\end{lemma}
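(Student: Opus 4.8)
The plan is to prove Lemma~\ref{lem62} by combining the minimality of $\uj$ with the monotone convergence $F^j \nearrow F$ and the fact that $u$ itself is an admissible competitor for $\mathfrak{F}^j$. First I would record the key comparison: since $\uj$ minimizes $\mathfrak{F}^j(\cdot,\Omega')$ among maps agreeing with $u$ on $\partial\Omega'$, and since $u=u$ on $\partial\Omega'$ with $\bigl(|u|^2-a^2\bigr)_+^m = 0$ a.e.\ on $\Omega'$ (because $a\ge \|u\|_{\LL^\infty(\Omega';\mathbb{R}^N)}$), we get
\begin{equation}\label{eq:compare}
\int_{\Omega'} F^j(x,D\uj)\,dx + \int_{\Omega'} \bigl(|\uj|^2-a^2\bigr)_+^m\,dx \;\le\; \int_{\Omega'} F^j(x,Du)\,dx \;\le\; \int_{\Omega'} F(x,Du)\,dx,
\end{equation}
using \eqref{(II)} in the last inequality. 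Dropping the nonnegative penalty term and invoking the lower bound in \eqref{p-p-1approx}, this yields a uniform $\LL^p$ bound on $D\uj$; combined with the boundary condition and Poincaré's inequality, $\uj$ is bounded in $\WW^{1,p}(\Omega';\mathbb{R}^N)$, so up to a subsequence $\uj \weak \bar u$ in $\WW^{1,p}$ and $\uj \to \bar u$ strongly in $\LL^p$ and a.e., with $\bar u = u$ on $\partial\Omega'$.

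Next I would identify the weak limit $\bar u$ with $u$ via a lower semicontinuity argument. Fix $k\in\mathbb{N}$. Since $F^k \le F^j$ for $j\ge k$ by \eqref{(II)}, and $\xi\mapsto F^k(x,\xi)$ is convex (it is $\mathcal{C}^2$ with the ellipticity \eqref{(III)}), the functional $v\mapsto \int_{\Omega'} F^k(x,Dv)\,dx$ is sequentially weakly lower semicontinuous on $\WW^{1,p}$; hence
\[
\int_{\Omega'} F^k(x,D\bar u)\,dx \;\le\; \liminf_{j\to\infty}\int_{\Omega'} F^k(x,D\uj)\,dx \;\le\; \liminf_{j\to\infty}\int_{\Omega'} F^j(x,D\uj)\,dx \;\le\; \int_{\Omega'} F(x,Du)\,dx.
\]
Letting $k\to\infty$ and using monotone convergence $F^k(x,D\bar u)\nearrow F(x,D\bar u)$ gives $\int_{\Omega'} F(x,D\bar u)\,dx \le \int_{\Omega'} F(x,Du)\,dx$; also, by a.e.\ convergence and Fatou, $\bigl(|\bar u|^2-a^2\bigr)_+^m \le \liminf \bigl(|\uj|^2-a^2\bigr)_+^m$, whose integral is zero, so the penalty term vanishes for $\bar u$ too. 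Thus $\bar u$ is admissible in the original problem \eqref{defint} on $\Omega'$ with the boundary datum $u$, and since $u$ is a local minimizer of \eqref{defint} (restricting the minimality inequality, or rather using that $u$ minimizes among its own boundary values on $\Omega'$ — here one should be slightly careful and argue on a smaller subdomain or invoke that $F$ is convex so the minimization problem with fixed boundary data has $u$ as its unique minimizer), we conclude $\bar u = u$ a.e.

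Finally I would upgrade weak to strong convergence and deduce the remaining two limits. From \eqref{eq:compare} and lower semicontinuity applied with $\bar u = u$,
\[
\int_{\Omega'} F(x,Du)\,dx \;\le\; \liminf_{j\to\infty}\int_{\Omega'} F^j(x,D\uj)\,dx \;\le\; \limsup_{j\to\infty}\int_{\Omega'} F^j(x,D\uj)\,dx \;\le\; \int_{\Omega'} F(x,Du)\,dx,
\]
forcing $\int_{\Omega'} F^j(x,D\uj)\,dx \to \int_{\Omega'} F(x,Du)\,dx$, and then \eqref{eq:compare} forces $\int_{\Omega'}\bigl(|\uj|^2-a^2\bigr)_+^m\,dx \to 0$, which with $\bigl(|\uj|-a\bigr)_+^{2m}\le \bigl(|\uj|^2-a^2\bigr)_+^{m}$ (valid since $(|\uj|-a)_+^2\le(|\uj|-a)_+(|\uj|+a)=( |\uj|^2-a^2)_+$ pointwise) gives the first claimed limit. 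For the strong $\LL^p$ convergence of the gradients I would use the uniform convexity coming from \eqref{(III)}: a standard argument gives, for $j\ge k$,
\[
\overline\nu \int_{\Omega'} \bigl(1+|D\uj|^2+|Du|^2\bigr)^{\frac{p-2}{2}}\bigl|D\uj-Du\bigr|^2\,dx \;\le\; C\left(\int_{\Omega'} F^k(x,D\uj)\,dx + \int_{\Omega'} F^k(x,Du)\,dx - 2\int_{\Omega'} F^k\!\Bigl(x,\tfrac{D\uj+Du}{2}\Bigr)\,dx\right),
\]
and the right-hand side tends to $0$ by the energy convergence just proved together with lower semicontinuity applied to the midpoint term (plus $F^k\le F$); since $p\ge 2$ this controls $\int_{\Omega'}|D\uj-Du|^p\,dx$. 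The main obstacle I anticipate is the identification step $\bar u = u$: one must pass between the monotone family $F^j$ and the limit $F$ while handling the fixed boundary condition correctly, and the cleanest route is to exploit the strict convexity of $\xi\mapsto F(x,\xi)$ (guaranteed by \eqref{(III)} with $\overline\nu>0$, hence by (F2)) so that the Dirichlet problem for $F$ on $\Omega'$ has $u$ as its \emph{unique} minimizer, which pins down $\bar u$; the a.e.\ convergence then also yields vanishing of the penalty in the limit without extra work.
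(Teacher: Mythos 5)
Your argument is correct, and up to the last step it is essentially the paper's proof: the comparison with the competitor $u$ (legitimate because $a\ge \|u\|_{\LL^\infty(\Omega';\mathbb{R}^N)}$ kills the penalty), the uniform $\LL^p$ bound from \eqref{p-p-1approx}, the identification of the weak limit through weak lower semicontinuity of $F^{j_0}$ for fixed $j_0$, monotone convergence in $j_0$, the minimality of $u$ among maps with its boundary values, and strict convexity of $F$; the squeeze then gives the energy convergence and, via \eqref{eq:compare}, the vanishing of the penalty (your explicit pointwise inequality $(|\uj|-a)_+^{2m}\le(|\uj|^2-a^2)_+^m$ is the step the paper leaves implicit). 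Where you genuinely diverge is the strong $\LL^p$ convergence of the gradients: the paper starts from the ellipticity \eqref{(III)} in the form $\bar\nu\int(1+|Du|^2+|D\uj|^2)^{\frac{p-2}{2}}|Du-D\uj|^2\le\int\bigl(F^j(x,Du)-F^j(x,D\uj)+\langle D_\xi F^j(x,D\uj),D\uj-Du\rangle\bigr)$ and then disposes of the first-variation term by testing the Euler--Lagrange system of $\mathfrak{F}^j$ with $\varphi=u-\uj$, which brings in the penalty term and requires the uniform $\LL^{2m}$ bound \eqref{bounda} to pass to the limit; you instead use the uniform (in $j$ and $k$) strong convexity of $F^k$ in midpoint form, bounding the quadratic defect by $\int F^k(x,D\uj)+\int F^k(x,Du)-2\int F^k\bigl(x,\tfrac{D\uj+Du}{2}\bigr)$, whose limsup is controlled by $\int F(x,Du)-\int F^k(x,Du)\to 0$ using only the energy convergence, $F^k\le F^j$ for $j\ge k$, weak lower semicontinuity at the midpoint, and monotone convergence. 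Your route is arguably more self-contained (it never invokes the Euler--Lagrange system, hence avoids handling the derivative of the penalty and the density argument needed to test with $u-\uj$), at the cost of invoking the standard comparability $\int_0^1(1+|\xi+t(\eta-\xi)|^2)^{\frac{p-2}{2}}\,dt\approx(1+|\xi|^2+|\eta|^2)^{\frac{p-2}{2}}$ behind the midpoint estimate; the paper's EL-based computation has the side benefit of exercising exactly the identity that is reused later in the approximation scheme. Both give $|D\uj-Du|^p$ control since $p\ge2$, so the conclusion stands either way.
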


\begin{proof}
By the minimality of $\uj$, using $u$ as test function in the minimality inequality at \eqref{minineq}, we get

\begin{eqnarray}\label{conv1}
\int_{\Omega^{\prime}} \! \Bigl( F^j(x,D{\uj} ) + \bigl( |{\uj} |^2-a^2 \bigr)^{m}_{+} \Bigr)\,dx
\leq \int_{\Omega^{\prime}} \!  F^j(x,Du )\,dx, 
\end{eqnarray}
since $|u|\le  ||u||_{\LL^\infty(\Omega^\prime; \mathbb{R}^N)}\le a$ a.e. in $\Omega^\prime$.
%
Then, by virtue of the first inequality in \eqref{p-p-1approx}, we have that
\begin{eqnarray}\label{bound}
	K_0\int_{\Omega^{\prime}}(|D{\uj}|^p-1)\,dx &\le& \int_{\Omega^{\prime}} \! \Bigl( F^j(x,D{\uj} ) + \bigl( |{\uj} |^2-a^2 \bigr)^{m}_{+} \Bigr)\,dx\cr\cr 
&\leq& \int_{\Omega^{\prime}} \!  F^j(x,Du )\,dx\le \int_{\Omega^{\prime}} \!  F(x,Du )\,dx,
\end{eqnarray}
where  last inequality is due to the monotonicity of the sequence $F^j$ given by \eqref{(II)}.
Hence the sequence $( D{\uj} )_j$ is bounded in $\LL^{p}(\Omega^{\prime}; \mathbb{R}^{nN})$ and there exists  $w\in \WW^{1,p}(\Omega^{\prime}; \mathbb{R}^N)$ such that $${\uj} \rightharpoonup w \qquad \text{weakly\,\,in}\quad \WW^{1,p}(\Omega^{\prime}; \mathbb{R}^N)\,\,\,\, \text{as}\,\,\,\,j\to +\infty.$$
Passing to the limit as $j\to+\infty$ in \eqref{conv1},  using the last inequality in \eqref{bound},  we also have that 
\begin{eqnarray}\label{conv2}
&&\limsup_{j\to+\infty}\int_{\Omega^{\prime}} \! \Bigl( F^j(x,D{\uj} ) + \bigl( |{\uj} |^2-a^2 \bigr)^{m}_{+} \Bigr)\,dx
\le \int_{\Omega^{\prime}} \!  F(x,Du )\,dx.
\end{eqnarray}
On the other hand, for every fixed $j_0\in \mathbb{N}$, the convexity of $\xi\to F^{j_0}(x,\xi)$, by lower semicontinuity, implies
\begin{eqnarray}\label{conv3}
\int_{\Omega^{\prime}} \!  F^{j_0}(x,Dw )\,dx &\le& \liminf_{j\to+\infty}\int_{\Omega^{\prime}} \!F^{j_0}(x,Du_j)\,dx\cr\cr
&\le& \liminf_{j\to+\infty}\int_{\Omega^{\prime}} \!F^{j}(x,Du_j)\,dx\cr\cr
&\le& \liminf_{j\to+\infty}\int_{\Omega^{\prime}} \! \Bigl( F^j(x,D{\uj} ) + \bigl( |{\uj} |^2-a^2 \bigr)^{m}_{+} \Bigr)\,dx\cr\cr
&\le&\int_{\Omega^{\prime}} \!  F(x,D{u} )\,dx,
\end{eqnarray}
where we used again the monotonocity of the sequence $F^j$ and  \eqref{conv2}. Taking the limit as $j_0\to\infty$ in the previous estimate, using the monotone convergence Theorem, we obtain
\begin{eqnarray}\label{conv4}
&&\int_{\Omega^{\prime}} \!  F(x,Dw )\,dx= \liminf_{j_0\to+\infty}\int_{\Omega^{\prime}} \!  F^{j_0}(x,Dw )\,dx\cr\cr
&\le& \int_{\Omega^{\prime}} \!  F(x,D{u} )\,dx\le \int_{\Omega^{\prime}} \!  F(x,D{w} )\,dx, 
\end{eqnarray}
by the minimality of $u$ and since $w=u$ on $\partial\Omega'$.
This, by  the strict convexity of $F$, yields that  $w\equiv u$ in $\Omega^\prime$. Hence, we conclude that
\begin{equation*}\label{conv}
u_j \rightharpoonup u \qquad \text{weakly\,\,in}\quad \WW^{1,p}(\Omega^{\prime}; \mathbb{R}^N).
\end{equation*} 
Using \eqref{conv4} in \eqref{conv3}, we have in particular that
\begin{eqnarray}\label{con1}
&&\lim_{j\to+\infty}\int_{\Omega^{\prime}} \! \bigl( |{\uj} |^2-a^2 \bigr)^{m}_{+} \,dx=0
\end{eqnarray}
which in turn implies
\begin{equation}\label{bounda}
  \sup_{j\in\mathbb{N}}\int_{\Omega^{\prime}} \!  |{\uj} |^{2m} \,dx  \le 2^m(1+|\Omega'|a^{2m})
\end{equation}
and also
\begin{eqnarray}\label{con2}
&&\lim_{j\to+\infty}\int_{\Omega^{\prime}} \!  F^j(x,D{u_j} )\,dx=\int_{\Omega^{\prime}} \!  F(x,D{u} )\,dx,
\end{eqnarray}
i.e. the first conclusion of the Lemma.
We also record that, by virtue of \eqref{(III)}, we have
\begin{eqnarray*}
&&\bar\nu\int_{\Omega'}(1+|Du|^2+|D\uj|^2)^{\frac{p-2}{2}}|Du-D\uj|^2\,dx\cr\cr
&\le& \int_{\Omega'}\Big( F^j(x,Du)-F^j(x,D\uj)+\langle D_\xi F^j(x,D\uj), D\uj-Du\rangle\Big)\,dx
\end{eqnarray*}
Since the Euler Lagrange system of the functional $\mathfrak{F}^j$ reads as
$$\int_{\Omega'}\langle D_\xi F^j(x,D\uj), D\varphi\rangle\,dx+2m\int_{\Omega'}(|\uj|^2-a^2)^{m-1}\uj\cdot\varphi\,dx =0$$ for all $\varphi = (\varphi^{\alpha})_{\alpha = 1, \dots, N} \in \CC_0^1(\Omega', \mathbb{R}^N)$, testing it with $\varphi=u-\uj,$ which is legitimate by density, we get
\begin{eqnarray*}
&&\bar\nu\int_{\Omega'}(1+|Du|^2+|Du_j|^2)^{\frac{p-2}{2}}|Du-Du_j|^2\,dx\cr\cr
&\le& \int_{\Omega'}\Big( F^j(x,Du)-F^j(x,Du_j)+\langle D_\xi F^j(x,Du_j), Du_j -Du\rangle\Big)\,dx\cr\cr
&=&\int_{\Omega'}\Big( F^j(x,Du)-F^j(x,D\uj)\Big)\,dx -2m\int_{\Omega'}(|\uj|^2-a^2)^{m-1}\uj(u-\uj)\,dx\cr\cr
&\le&\int_{\Omega'}\Big( F(x,Du)-F^j(x,D\uj)\Big)\,dx-2m\int_{\Omega'}(|\uj|^2-a^2)^{m-1}\uj(u-\uj)\,dx.
\end{eqnarray*}
Therefore, by \eqref{con1}, \eqref{bounda} and \eqref{con2}, taking the limit as $j\to +\infty$ in previous inequality, we conclude that
\begin{eqnarray*}
&&\limsup_{j\to +\infty}\int_{\Omega'}(1+|Du|^2+|D\uj|^2)^{\frac{p-2}{2}}|Du-D\uj|^2\,dx=0
\end{eqnarray*}
that is 
\begin{equation}\label{conv}
\uj \to u \qquad \text{strongly\,\,in}\quad \WW^{1,p}(\Omega^{\prime}; \mathbb{R}^N)	
\end{equation} 
which concludes the proof.
\end{proof}

The main tool in the proof of our main result  is the following Gagliardo--Nirenberg type inequality that we state
as a lemma and whose proof can be found in the Appendix A of \cite{CKP} (see also  \cite{GPdN}).
\begin{lemma}\label{gagnir}
For $\eta \in \CC^{1}_{c}(\Omega^{\prime})$ with $\eta \geq 0$ and $\CC^2$ maps
$v\colon \Omega^{\prime} \to \RN$ we have
\begin{eqnarray*}\label{intineq0}
\int_{\Omega^{\prime}} \! \eta^{2}|Dv|^{\frac{m}{m+1}(p+2)} \, dx
&\leq& (p+2)^{2}\left( \int_{\Omega^{\prime}}
\! \eta^{2}|v|^{2m} \, dx \right)^{\frac{1}{m+1}}\cr\cr
\times \left[ \left( \int_{\Omega^{\prime}}\! \eta^{2}
|D\eta |^{2}|Dv|^{p} \, dx \right)^{\frac{m}{m+1}} \right. &+& \left. n {N} \left(
\int_{\Omega^{\prime}}\! \eta^{2}|Dv|^{p-2}|D^{2}v|^{2} \, dx
\right)^{\frac{m}{m+1}}  \right]
\end{eqnarray*}
where $p\in (1, \infty)$ and $m>1$.
\end{lemma}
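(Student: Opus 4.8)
The plan is to establish the inequality by the classical ``endpoint'' Gagliardo--Nirenberg scheme: an integration by parts trading one power of $|Dv|$ for a factor $|v|$ and a second derivative, followed by H\"older's inequality to reorganize everything into a product. It is convenient to abbreviate $s:=\tfrac{m}{m+1}(p+2)$ and
\[
I_s:=\int_{\Omega^{\prime}}\!\eta^2|Dv|^{s}dx,\quad I_v:=\int_{\Omega^{\prime}}\!\eta^2|v|^{2m}dx,\quad I_1:=\int_{\Omega^{\prime}}\!\eta^2|D\eta|^2|Dv|^{p}dx,\quad I_2:=\int_{\Omega^{\prime}}\!\eta^2|Dv|^{p-2}|D^2v|^2dx;
\]
all four are finite since $\eta\in\CC^1_c(\Omega^{\prime})$ and $v\in\CC^2$, and we may assume $I_s>0$, the case $Dv\equiv0$ on $\mathrm{supp}\,\eta$ being trivial.

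First I would write $|Dv|^{s}=|Dv|^{s-2}\sum_{i,\alpha}(v^\alpha_{x_i})^2$ and integrate by parts in $x_i$, which is legitimate because $\eta$ is compactly supported, $v\in\CC^2$, and $p\ge2$ (so $s-2\ge0$; if one prefers, one may replace $|Dv|$ by $(\mu^2+|Dv|^2)^{1/2}$ and let $\mu\downarrow0$ afterwards). This yields $I_s=-\sum_{i,\alpha}\int_{\Omega^{\prime}}\partial_{x_i}\!\big(\eta^2|Dv|^{s-2}v^\alpha_{x_i}\big)v^\alpha\,dx$; expanding the derivative and estimating the scalar products over the indices $i,\alpha$ by Cauchy--Schwarz gives
\[
I_s\le C\,nN\Big[\int_{\Omega^{\prime}}\!\eta|D\eta|\,|Dv|^{s-1}|v|\,dx+\int_{\Omega^{\prime}}\!\eta^2|Dv|^{s-2}|D^2v|\,|v|\,dx\Big],
\]
with $C$ of the size of $p+2$ (it comes from differentiating the power $|Dv|^{s-2}$, together with $s\le p+2$).

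Next I would apply the Cauchy--Schwarz inequality to each of these two integrals so as to factor off $I_1^{1/2}$ and $I_2^{1/2}$ respectively, distributing the powers of $\eta$ accordingly. The algebraic fact that makes this work is the identity
\[
2(s-1)-p\;=\;2(s-2)-(p-2)\;=\;\frac{(p+2)(m-1)}{m+1}\;=:\;\beta\quad(\ge0\ \text{since }m>1),
\]
so that in both cases the surviving factor is, up to a weight, $\big(\int_{\Omega^{\prime}}\eta^2|Dv|^{\beta}|v|^2dx\big)^{1/2}$. On this last integral I would use H\"older with exponents $\tfrac{m}{m-1}$ and $m$, noting that $\beta\cdot\tfrac{m}{m-1}=s$, to obtain $\int_{\Omega^{\prime}}\eta^2|Dv|^{\beta}|v|^2dx\le I_s^{(m-1)/m}I_v^{1/m}$. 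Combining the pieces produces the self-improving bound
\[
I_s\le C\,nN\,\big(I_1^{1/2}+I_2^{1/2}\big)\,I_s^{\frac{m-1}{2m}}\,I_v^{\frac1{2m}} .
\]
Dividing by $I_s^{(m-1)/(2m)}$ (finite and positive), raising to the power $\tfrac{2m}{m+1}\in(1,2)$, and using $(a+b)^{\gamma}\le2^{\gamma-1}(a^{\gamma}+b^{\gamma})$ then gives $I_s\le C\,I_v^{1/(m+1)}\big(I_1^{m/(m+1)}+I_2^{m/(m+1)}\big)$, which is the assertion.

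The one genuinely delicate point is the \emph{bookkeeping of the constants} in the integration-by-parts step: one must check that the constant stays independent of $m$ (otherwise the reabsorption above is lost), that the sums over the $nN$ components contribute no more than the announced factor $nN$, that the differentiation of the powers of $|Dv|$ contributes no more than $(p+2)^2$ once one has raised to the exponent $\tfrac{2m}{m+1}\le2$ (here one also uses $nN\ge1$), and that the powers of $\eta$ are split so that every intermediate integral carries the weight $\eta^2$. Everything else is the standard Gagliardo--Nirenberg interpolation mechanism; note that the space dimension $n$ appears only in the multiplicative constant and not in the exponents, precisely because this is the endpoint case in which the scaling exponents cancel.
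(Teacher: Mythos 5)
Your route---integration by parts trading $|Dv|^{\sigma}$, $\sigma=\tfrac{m}{m+1}(p+2)$, for $|v|$ against first and second derivatives, then Cauchy--Schwarz, then H\"older with exponents $\tfrac{m}{m-1}$ and $m$, then absorption of $I_s$---is exactly the mechanism of the proof the paper points to (Appendix A of \cite{CKP}; the paper itself does not reprove the lemma). But the step you yourself flag as ``bookkeeping of the constants'' hides a genuine obstruction, and it is not about constants only. After the integration by parts, the commutator term involving the cut-off carries the weight $\eta|D\eta|$ and nothing more. If you give all of it to the first Cauchy--Schwarz factor, so as to produce $I_1^{1/2}=\bigl(\int_{\Omega'}\eta^2|D\eta|^2|Dv|^p\,dx\bigr)^{1/2}$ as in the statement, the surviving factor is $\bigl(\int_{\Omega'}|Dv|^{2\sigma-2-p}|v|^2\,dx\bigr)^{1/2}$ with no power of $\eta$ at all; your subsequent H\"older step then yields the unweighted quantities $\int|Dv|^{\sigma}$ and $\int|v|^{2m}$, which can neither be absorbed into $I_s=\int\eta^2|Dv|^{\sigma}$ nor matched with $I_v=\int\eta^2|v|^{2m}$ (and $\eta\le1$ would not help: the pointwise inequality you would need, $\eta\ge\eta^2\cdot\eta^{-1}$-type, goes the wrong way). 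If instead you keep the $\eta$ on the surviving factor, your argument runs verbatim but delivers $\bigl(\int_{\Omega'}|D\eta|^2|Dv|^p\,dx\bigr)^{m/(m+1)}$, i.e.\ a variant of the lemma without the extra $\eta^2$ in the first right-hand-side integral. This is not repairable by ``distributing the powers of $\eta$ accordingly'': a scaling check (take $v$ affine, so $I_2=0$, and replace $\eta$ by $\lambda\eta$ with $\lambda\downarrow0$) shows the left-hand side scales like $\lambda^2$ while the $\eta^2|D\eta|^2$-term scales like $\lambda^{(4m+2)/(m+1)}$, so no homogeneous Cauchy--Schwarz/H\"older chain of the kind you describe can produce the literal statement.

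In short, what you prove is a correct interpolation inequality of the announced shape with $\int|D\eta|^2|Dv|^p$ in place of $\int\eta^2|D\eta|^2|Dv|^p$, and with a generic constant $C(n,N,p)$ rather than the exact $(p+2)^2$ and $nN$ (your final step $(a+b)^{2m/(m+1)}\le 2^{\frac{2m}{m+1}-1}(a^{2m/(m+1)}+b^{2m/(m+1)})$ and the Cauchy--Schwarz over the $nN$ components already lose those precise values). For the purposes of this paper that variant is fully sufficient: in Section \ref{quattro} the lemma is applied with $0\le\eta\le1$, $|D\eta|\le c/(t-s)$, and the two cut-off contributions are immediately merged through $\eta^2+|D\eta|^2\le c(t-s)^{-2}$, while the Moser-type iteration only needs polynomial dependence of the constant on $p+2+2\gamma$, which your argument retains. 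So: same approach as the cited proof, correct core mechanism, but as written it does not establish the statement in the exact form given, and you should either prove (and use) the $|D\eta|^2$-version or make explicit the additional normalization under which the $\eta^2|D\eta|^2$-form is derived. Two smaller points you handled correctly: the regularization $(\mu^2+|Dv|^2)^{1/2}$ to justify the integration by parts when the exponent $\sigma-2$ is small, and the finiteness of $I_s$ before dividing by $I_s^{(m-1)/(2m)}$.
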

We conclude this subsection with a preliminary higher differentiability and a higher integrability result, that will be useful in the sequel.
\begin{theorem}\label{premhighdiff}
	Let ${\uj}\in \WW^{1,p}(\Omega';\mathbb{R}^N)\cap \LL^{2m}(\Omega';\mathbb{R}^N)$ be a local minimizer of $\mathfrak{F}^{j}(u,\Omega')$. Then
	$$(1+|D{\uj}|^2)^{\frac{p-2}{4}}|D{\uj}|\in \WW^{1,2}_{\mathrm{loc}}(\Omega') \quad \text{and}\qquad |D{\uj}|\in \LL^{\frac{m}{m+1}(p+2)}_{\mathrm{loc}}(\Omega').$$
\end{theorem}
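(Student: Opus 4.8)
The plan is to establish the two claimed regularity properties for the approximating minimizer $\uj$ by a now-classical difference-quotient argument combined with the Gagliardo--Nirenberg inequality of Lemma~\ref{gagnir}. Since $\uj$ minimizes $\mathfrak{F}^j$ and $F^j$ satisfies the standard growth bounds \eqref{p-p-1approx}, the ellipticity \eqref{(III)} and the second derivative bounds \eqref{(VII)}, together with the $\WW^{1,p}$-nature of the perturbation $(|v|^2-a^2)^m_+$, the functional $\mathfrak{F}^j$ falls into the setting of the higher differentiability theory developed in \cite{GPdN}; the point of this theorem is merely to quote that result for the concrete integrand at hand. First I would write down the Euler--Lagrange system for $\mathfrak{F}^j$ (already recorded in the proof of Lemma~\ref{lem62}), test it with second-order difference quotients $\tau_{s,h}\varphi$ of a localized variant of $\uj$, and exploit \eqref{(III)} to absorb the leading term. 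The bound \eqref{(VII)} (first line) gives, for \emph{fixed} $j$, that $F^j_{\xi\xi}$ has $p$-growth, so all the error terms are controlled by $\int \eta^2(1+|D\uj|^2)^{p/2}$ plus lower-order contributions involving the $\LL^{2m}$-norm of $\uj$, which is finite by hypothesis; the zero-order term $2m(|\uj|^2-a^2)^{m-1}\uj$ is handled using $|\uj|\in\LL^{2m}$ and the fact that $(|\uj|^2-a^2)^{m-1}_+$ only activates on the set $\{|\uj|>a\}$. Letting $h\to0$ yields $(1+|D\uj|^2)^{\frac{p-2}{4}}|D\uj|\in\WW^{1,2}_{\mathrm{loc}}(\Omega')$, i.e. $\int_{\Omega'}\eta^2(1+|D\uj|^2)^{\frac{p-2}{2}}|D^2\uj|^2\,dx<+\infty$ for every $\eta\in\CC^1_c(\Omega')$.

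With the second-order information in hand, the higher integrability of $|D\uj|$ follows directly by feeding the estimate into Lemma~\ref{gagnir} applied to $v=\uj$: the right-hand side there is finite because $\int\eta^2|\uj|^{2m}<\infty$ (hypothesis $\uj\in\LL^{2m}$), $\int\eta^2|D\eta|^2|D\uj|^p<\infty$ (since $\uj\in\WW^{1,p}$), and $\int\eta^2|D\uj|^{p-2}|D^2\uj|^2<\infty$ (the differentiability just proved). This gives $\int_{\Omega'}\eta^2|D\uj|^{\frac{m}{m+1}(p+2)}\,dx<+\infty$, hence $|D\uj|\in\LL^{\frac{m}{m+1}(p+2)}_{\mathrm{loc}}(\Omega')$, which is the second assertion. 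A minor technical wrinkle is that Lemma~\ref{gagnir} is stated for $\CC^2$ maps $v$ whereas $\uj$ is a priori only $\WW^{2,2}_{\mathrm{loc}}$ with the degenerate weight; this is resolved by a routine mollification/approximation of $\uj$ together with the monotone convergence theorem, or by invoking the version of the Gagliardo--Nirenberg inequality in \cite{GPdN} that is already phrased for Sobolev maps.

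The main obstacle is the first step, namely obtaining the second-order Caccioppoli inequality with a \emph{finite} (though $j$-dependent) right-hand side: one must carefully handle the coupling between the gradient term and the zero-order term in the Euler--Lagrange system, making sure that the difference-quotient test function is admissible (it is, after localization by a cutoff, by the density statement used in Lemma~\ref{lem62}) and that the term $2m\int(|\uj|^2-a^2)^{m-1}_+\uj\cdot\tau_{-h}(\eta^2\tau_h\uj)$ can be estimated — after a discrete integration by parts and using that $t\mapsto (t-a^2)^{m-1}_+$ is monotone and locally Lipschitz away from $t=a^2$ — by $C(m)\int_{\Omega'}(|\uj|^2+1)^{m-1}|D\uj|^2$, which in turn is finite by $\uj\in\WW^{1,p}\cap\LL^{2m}$ and an interpolation (or directly, since for fixed $j$ we may even use the stronger a priori bound $F^j(x,\xi)\le K_1(j)(1+|\xi|)^p$ from \eqref{p-p-1approx} to upgrade $\uj$ to $\WW^{1,p}$ with quantitative control). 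Since at this stage no uniformity in $j$ or $m$ is required, all constants are allowed to blow up, and the argument is essentially the one in \cite[Section 4 or Theorem 3.x]{GPdN}; accordingly I would present the proof as a reduction to that reference, spelling out only the verification that the present integrand $F^j(x,\xi)+(|v|^2-a^2)^m_+$ meets its hypotheses.
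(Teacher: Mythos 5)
The paper in fact gives no proof of Theorem \ref{premhighdiff}: it is quoted directly from \cite{GPdN}. So your plan to verify that the integrand $F^j(x,\xi)+(|v|^2-a^2)^m_+$ satisfies the hypotheses of that reference and then cite it is exactly what the paper does, and your sketch of the underlying mechanism (Euler--Lagrange system, difference quotients, ellipticity \eqref{(III)}, the fixed-$j$ bounds in \eqref{(VII)} and \eqref{(V)}, then Lemma \ref{gagnir} to convert the second-order information into $\LL^{\frac{m}{m+1}(p+2)}_{\rm loc}$ integrability of $|D\uj|$) is the right skeleton of the argument being cited.

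Two points in your sketch would not survive as written, though both are repairable. First, your bound of the zero-order term by $C(m)\int_{\Omega'}(|\uj|^2+1)^{m-1}|D\uj|^2\,dx$, claimed finite ``by $\uj\in\WW^{1,p}\cap\LL^{2m}$ and an interpolation'', is not justified: for $p$ close to $2$ and $m$ large, membership in $\WW^{1,p}\cap\LL^{2m}$ gives no control of $\int|\uj|^{2(m-1)}|D\uj|^2$. The standard (and correct) treatment, which you half-invoke, is that no bound is needed at all: $v\mapsto(|v|^2-a^2)^m_+$ is convex, so after the discrete integration by parts the term equals $2m\int\eta^2\,\tau_{s,h}\bigl((|\uj|^2-a^2)^{m-1}_+\uj\bigr)\cdot\tau_{s,h}\uj\,dx\ge 0$ by monotonicity of the gradient of a convex function, hence it sits on the favourable side of the inequality and is simply discarded. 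Second, the coefficient term coming from \eqref{(V)} is not ``controlled by $\int\eta^2(1+|D\uj|^2)^{p/2}$'': after Young's inequality it produces $\int\eta^2h^2(1+|D\uj|^2)^{p/2}$ with $h\in\LL^r$ only, so one needs the H\"older--Sobolev interpolation and reabsorption step (the same device with the exponent $\vartheta$ used in the proof of Theorem \ref{lipschitz}) rather than a direct bound; this is precisely part of the machinery supplied by \cite{GPdN}, so deferring to that reference, as the paper itself does, is the cleanest way to close these gaps.
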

For the proof we refer to  \cite{GPdN}.

\subsection{The Lipschitz continuity of the approximating minimizers}

\label{tre.tre}

Here, we establish the Lipschitz regularity of the approximating minimizers. Even tough such regularity is well known for minimizers of integral functionals satisfying standard growth conditions with respect to the gradient variable, the growth with respect to the $u$ variable in our energy density doesn't seem to fit with the available literature. The proof, however, relies on the very classical Moser iteration argument. More precisely, we have the following
\begin{theorem}\label{lipschitz}
Let ${\uj}\in \WW^{1,p}(\Omega';\R^N)\cap\LL^{2m}(\Omega';\R^N)$ be  a local minimizer of the functional \eqref{fun}. Then ${\uj}\in {\WW^{1,\infty}_{\loc}(\Omega'; \mathbb{R}^N)}$ with the estimate
$$||D{\uj}||_{\LL^\infty(B_R; \mathbb{R}^N)} \le M_{j}$$
for every ball $B_R\Subset\Omega'$ with a constant $M_j$ depending on $j$.
\end{theorem}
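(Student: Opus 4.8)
The plan is to run a Moser iteration on the Euler--Lagrange system of $\mathfrak F^j$ after differentiating it. The decisive point is that, for a \emph{fixed} $j$, the integrand $F^j$ enjoys \emph{standard} $p$-growth with respect to the gradient: by \eqref{(III)} it is uniformly $p$-elliptic, by the first line of \eqref{(VII)} we have $|F^j_{\xi\xi}(x,\xi)|\le C(j)(1+|\xi|^2)^{\frac{p-2}{2}}$, and by the first line of \eqref{(V)} we have $|F^j_{x\xi}(x,\xi)|\le C(j)\,h(x)(1+|\xi|^2)^{\frac{p-1}{2}}$ with $h\in\LL^r_{\mathrm{loc}}(\Omega')$, $r>\max\{n,p+2\}$; moreover $F^j(x,\xi)=\tilde F^j(x,|\xi|)$ by \eqref{(I)}. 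Since $p\ge 2$, Theorem \ref{premhighdiff} gives $u_j\in\WW^{2,2}_{\mathrm{loc}}(\Omega')$, together with $(1+|Du_j|^2)^{\frac{p-2}{2}}|D^2u_j|^2\in\LL^1_{\mathrm{loc}}(\Omega')$ and $|Du_j|\in\LL^{\frac{m}{m+1}(p+2)}_{\mathrm{loc}}(\Omega')$; these two facts are, respectively, the base case and the initial integrability that allow the iteration to start, and it is precisely here that $r>p+2$ enters, to make the first right--hand side below finite. As recalled in the proof of Lemma \ref{lem62}, the Euler--Lagrange system of $\mathfrak F^j$ reads
$$\int_{\Omega'}\langle D_\xi F^j(x,Du_j),D\varphi\rangle\,dx+2m\int_{\Omega'}(|u_j|^2-a^2)_+^{m-1}u_j\cdot\varphi\,dx=0,$$
and, since $u_j\in\WW^{2,2}_{\mathrm{loc}}$ and $F^j\in\CC^2$, it may be differentiated in each coordinate direction $e_s$ by the difference--quotient method.

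First I would establish a second--order Caccioppoli inequality. Fix $\eta\in\CC^1_c(\Omega')$ with $0\le\eta\le 1$ and $\gamma\ge 0$, and introduce the bounded truncation $\Phi_k(t)=\min\{(1+t)^\gamma,k\}$, needed because $u_j$ is not yet known to be bounded. Testing the $e_s$-differentiated system with $\varphi_s=\eta^2\Phi_k(|Du_j|^2)\,D_su_j$ and summing over $s=1,\dots,n$, the ellipticity \eqref{(III)} produces the coercive term $\overline\nu\int\eta^2(1+|Du_j|^2)^{\frac{p-2}{2}}\Phi_k\,|D^2u_j|^2\,dx$; the terms in which a derivative of $\Phi_k$ or of $\eta$ falls on $D_su_j$ are treated, using the radial structure \eqref{(I)}, exactly as in the classical Moser iteration for functionals of Uhlenbeck type (they carry a favourable sign, up to a part absorbed into the coercive term). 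The only genuinely new ingredient, the lower--order term, turns out to be harmless: with this choice of test function
\begin{multline*}
2m\sum_{s,\alpha}\int_{\Omega'}\eta^2\Phi_k\,D_s\big[(|u_j|^2-a^2)_+^{m-1}u_j^\alpha\big]\,D_su_j^\alpha\,dx\\
=2m\int_{\Omega'}\eta^2\Phi_k\Big(2(m-1)(|u_j|^2-a^2)_+^{m-2}\sum_s\langle u_j,D_su_j\rangle^2+(|u_j|^2-a^2)_+^{m-1}|Du_j|^2\Big)\,dx\;\ge\;0,
\end{multline*}
so it sits on the favourable side and may simply be discarded. Estimating the remaining $F^j_{x\xi}$-contribution by the first line of \eqref{(V)} and Young's inequality, and letting $k\to\infty$ by monotone convergence, one reaches
$$\int_{\Omega'}\eta^2\big|D\big((1+|Du_j|^2)^{\frac{\gamma}{2}+\frac p4}\big)\big|^2\,dx\le C(j)\,(\gamma+1)^2\int_{\Omega'}(\eta^2+|D\eta|^2)(1+h^2)(1+|Du_j|^2)^{\gamma+\frac p2}\,dx,$$
valid for every $\gamma\ge 0$ making the right--hand side finite, the case $\gamma=0$ being provided by Theorem \ref{premhighdiff}.

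Finally I would iterate in the usual way. Setting $G=(1+|Du_j|^2)^{1/2}$ and $\beta=\gamma+\frac p2$, the Sobolev inequality turns the left--hand side into control of $\|\eta\,G^\beta\|_{\LL^{2^{*}}}^2$ with $2^{*}=\frac{2n}{n-2}$; since $h^2\in\LL^{r/2}_{\mathrm{loc}}$ and $r>n$ one has $\frac{2r}{r-2}<2^{*}$, so the term $\int\eta^2 h^2 G^{2\beta}$ is absorbed by H\"older's inequality and interpolation, leaving a gain of integrability by the fixed factor $\chi=\frac{n}{n-2}>1$. Running this estimate along the exponents $\beta_i$ with $\beta_{i+1}=\chi\beta_i$, $\beta_0=\frac p2$, on a sequence of shrinking concentric balls — the admissibility of each test function being ensured by the integrability gained at the previous step — and controlling the product of the factors $(\gamma_i+1)^2(R_i-R_{i+1})^{-2}$ as in the classical Moser scheme, one concludes that $\|Du_j\|_{\LL^\infty(B_R;\mathbb R^{nN})}\le M_j$ for every $B_R\Subset\Omega'$, with $M_j$ depending on $j$ through the constants $C(j)$ in \eqref{(VII)} and \eqref{(V)} and otherwise on $n,N,p,\overline\nu,\|h\|_{\LL^r(\Omega')}$, $\mathrm{dist}(B_R,\partial\Omega')$ and $\|Du_j\|_{\LL^{\frac m{m+1}(p+2)}(\Omega')}$. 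I expect the principal difficulty to lie in the bookkeeping rather than in any single estimate: making every test function admissible without an a priori $\LL^\infty$ bound (handled by the truncation $\Phi_k$ and the a priori integrability of Theorem \ref{premhighdiff}) and carrying the $\LL^r$-coefficient $h$ through the iteration, which is exactly what forces $r>\max\{n,p+2\}$.
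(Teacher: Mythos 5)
Your proposal is correct and follows essentially the same route as the paper: differentiate the Euler--Lagrange system, test with $\eta^2$ times a truncated power of $(1+|Du_j|^2)$ times $D_su_j$, use the radial structure for the sign of the term where the derivative hits the truncation and the nonnegativity of the penalization term, invoke the $j$-dependent standard-growth bounds in \eqref{(VII)}, \eqref{(V)} together with Theorem \ref{premhighdiff} to justify the test functions and the passage $k\to\infty$, and then run Sobolev plus $\LL^r$-interpolation ($r>n$) and Moser iteration. The only differences are cosmetic (your truncation $\Phi_k$ versus the paper's $\mathcal D_k u_j$, and difference quotients versus testing with $D_s\varphi$), and your explicit bookkeeping of admissibility along the iteration fills in exactly the step the paper labels as standard.
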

\begin{proof}
Testing the Euler--Lagrange system of the functional $\mathfrak{F}^j(v,\Omega)$ with the function
$\psi^\alpha = D_{x_s}\varphi^\alpha$ with $s\in\{1,\dots,n\}$, $\alpha \in \{1, \dots, N\}$ we get
\begin{eqnarray*}
0&=&\int_{\Omega'}\Big\langle \sum_{i,\alpha} F^j_{\xi^\alpha_i}(x, D{\uj}),D_{x_ix_s}\varphi^\alpha\Big\rangle\,dx\cr\cr 
&&+2m\int_{\Omega'}\sum_{\alpha}\bigl( |{\uj}|^2-a^2\bigr)^{m-1}_{+}{\uj}^\alpha  \cdot D_{x_s}\varphi^\alpha\,dx,\end{eqnarray*}
for every $\varphi\in C_0^1(\Omega';\mathbb{R}^N)$.
By Theorem \ref{premhighdiff}, we have that
$
{\uj} \in \WW^{2,2}_{\loc}(\Omega'; \mathbb{R}^N),
$
therefore integrating by parts the  integrals in previous identity, we get
\begin{eqnarray}
\label{second_variation}
&&\int_{\Omega'} \left (\sum_{i,\ell,\alpha,\beta}  F^j_{\xi^\alpha_i \xi^\beta_{\ell}}(x, D{\uj})D_{x_{\ell}x_s}({{\uj}^\beta})\varphi^\alpha_{x_i}   \, dx + \sum_{i,\alpha} F^j_{\xi^\alpha_i x_s}(x, D{\uj}) \varphi^\alpha_{x_i}  \right )\,dx \cr\cr
&&+2m\int_{\Omega'}\sum_{\alpha}D_{x_s}\left(\bigl( |{\uj}|^2-a^2\bigr)^{m-1}_{+}{\uj}^\alpha\right)  \varphi^\alpha\,dx=0,
\end{eqnarray}
holds for all $s = 1, \dots, n$ and for all $ \varphi\in C_0^1(\Omega'; \mathbb{R}^N)$. For $\eta \in \mathcal{C}^1_0(\Omega')$ and $\gamma \ge 0$, by density we can test \eqref{second_variation} with the function  $\varphi^\alpha=\eta^2
\left(\mathcal{D}_k{\uj}\right)^{\gamma} D_{x_s}({\uj}^\alpha)$,  where we used the notation
\begin{equation}\label{troncata}
\mathcal{D}_k{\uj} {:=} \left(1+\min\left\{|D{\uj}|^2,k^2\right\}\right)^{\frac12}
\end{equation}
One can easily check that
\begin{eqnarray*}
 \varphi^\alpha_{x_i}  &=&2 \eta \eta_{x_i} {\left(\mathcal{D}_k{\uj}\right)^{\gamma}} D_{x_s}({{\uj}^\alpha}) \\&&+  \eta^2 \gamma {\left(\mathcal{D}_k{\uj}\right)^{\gamma-2}\chi_{\{|Du_j|\le k\}}} |D{\uj}| D_{x_i}(|D{\uj}|) D_{x_s}({\uj^\alpha})\\ 
 &&+ \eta^2 {\left(\mathcal{D}_k{\uj}\right)^{\gamma}} D_{x_s x_i}({\uj}^\alpha). 
\end{eqnarray*}

Inserting in \eqref{second_variation} we get:
\begin{eqnarray*}
0 &=& 2 \int_{\Omega}\eta  \left(\mathcal{D}_k{\uj}\right)^{\gamma} \sum_{i,\ell,\alpha,\beta} F^j_{\xi_i^{\alpha} \xi_{\ell}^{\beta}}(x, D{\uj}) D_{x_{\ell} x_s}({\uj}^\beta)  \eta_{x_i}  D_{x_s}({\uj}^\alpha) \, dx\\
&& +  \int_{\Omega} \eta^2 \left(\mathcal{D}_k{\uj}\right)^{\gamma} \sum_{i,\ell,\alpha,\beta} F^j_{\xi_i^{\alpha} \xi_{\ell}^{\beta}}(x, D{\uj}) D_{x_{\ell} x_s}({\uj}^\beta)  D_{x_sx_i}({\uj}^\alpha) \, dx\\
&& + \gamma \int_{\Omega} \eta^2   \left(\mathcal{D}_k{\uj}\right)^{\gamma-2}\chi_{\{|Du_j|\le k\}}\sum_{i,\ell,\alpha,\beta} F^j_{\xi_i^{\alpha} \xi_{\ell}^{\beta}}(x, D{\uj}) D_{x_{\ell} x_s}({\uj}^\beta)\cr\cr
&&\qquad\cdot|D{\uj}| D_{x_i}(|D{\uj}|) D_{ x_s}({\uj}^\alpha) \, dx\\
&& + 2 \int_{\Omega} \eta\left(\mathcal{D}_k{\uj}\right)^{\gamma}\sum_{i,\alpha} F^j_{\xi_i x_s}(x, D{\uj})  \eta_{x_i}   D_{ x_s}({\uj}^\alpha) \, dx\\
&& +  \int_{\Omega} \eta^2 \left(\mathcal{D}_k{\uj}\right)^{\gamma} \sum_{i,\alpha} F^j_{\xi_i^{\alpha} x_s }(x, D{\uj})  D_{ x_sx_i}({\uj}^\alpha) \, dx\\
&& + \gamma \int_{\Omega} \eta^2   \left(\mathcal{D}_k{\uj}\right)^{\gamma-2}\chi_{\{|Du_j|\le k\}}\sum_{i,\alpha} F^j_{\xi_i^{\alpha} x_s}(x, D{\uj})\cr\cr
&&\qquad\cdot|D{\uj}| D_{x_i}(|D{\uj}|) D_{ x_s}({\uj}^\alpha) \, dx \cr\cr
&&+2m\int_{\Omega'}\eta^2  \left(\mathcal{D}_k{\uj}\right)^{\gamma}\sum_{\alpha}D_{x_s}\left(\bigl( |{\uj}|^2-a^2\bigr)^{m-1}_{+}{\uj}^\alpha\right)\cdot   D_{ x_s}({\uj}^\alpha)\,dx\cr\cr
&=:& I_1+I_2+I_3+I_4+I_5+I_6+I_7.
\end{eqnarray*}
Now we sum  all the terms in the previous  equation  with respect to $s$ from 1 to $n$, and we still denote for simplicity by $I_1-I_7$ the corresponding integrals.
\\
Previous equality yields
\begin{equation}\label{start}
	I_2+I_3+I_7\le |{I}_1|+|{I}_4|+|{I}_5|+|{I}_6|.
\end{equation}
\\
{Let us estimate the term $I_3.$ First of all, we have that
\[
F^j_{\xi_i^{\alpha}\xi_{\ell}^{\beta}}(x, \xi) = \left (\frac{{\tilde F^j_{tt}}(x, |\xi|)}{|\xi|^2} - \frac{{\tilde F^j_t}(x, |\xi|)}{|\xi|^3} \right ) \xi_i^{\alpha} \xi_{\ell}^{\beta} + \frac{{\tilde F^j_t}(x, |\xi|)}{|\xi|} \delta_{\xi_i^{\alpha} \xi_{\ell}^{\beta}},
\]
{where ${\tilde F^j}$ is given by \eqref{(I)} of Lemma \ref{apprcupguimas2parte}.}
Therefore
\begin{eqnarray*}
&&\sum_{i, \ell, \alpha, \beta, s} F^j_{\xi_i^{\alpha} \xi_{\ell}^{\beta}} (x, Du_j) D_{x_s}(u_j^{\alpha}) D_{x_{\ell} x_s}(u_j^{\beta}) D_{x_i}(|Du_j|) |Du_j|\\
&=& \left (\frac{{\tilde F^j_{tt}}(x, |Du_j|)}{|Du_j|^2} - \frac{{\tilde F^j_t}(x, |Du_j|)}{|Du_j|^3} \right ) \sum_{i, \ell, \alpha, \beta, s} D_{x_i}(u_j^{\alpha}) D_{x_{\ell}}(u_j^{\beta}) D_{x_s}(u_j^{\alpha}) D_{x_{\ell} x_s}(u_j^{\beta}) D_{x_i}(|Du_j|) |Du_j| \\
&& + \frac{{\tilde F^j_t}(x, |Du_j|)}{|Du_j|} \sum_{i, \alpha, s } D_{x_s} (u_j^{\alpha}) D_{x_s x_i}(u_j^{\alpha}) D_{x_i} (|Du_j|) |Du_j|
\\
&=& \left (\frac{{\tilde F^j_{tt}}(x, |Du_j|)}{|Du_j|^2} - \frac{{\tilde F^j_{t}}(x, |Du_j|)}{|Du_j|^3} \right ) \sum_{i,  \alpha, s} D_{x_i}(u_j^{\alpha}) D_{x_s}(u_j^{\alpha})  D_{x_i}(|Du_j|) D_{x_s}(|Du_j|) |Du_j|^2\\
&& +{\tilde F^j_{t}}(x, |Du_j|) |Du_j|  \sum_i [D_{x_i}(|Du_j|)]^2 \\
&=& \left ({\tilde F^j_{tt}}(x, |Du_j|) - \frac{{\tilde F^j_{t}}(x, |Du_j|)}{|Du_j|} \right ) \sum_{\alpha} \left [\sum_{i} D_{x_i}(u_j^{\alpha})  D_{x_i}(|Du_j|)\right ]^2 \\
&& + \frac{{\tilde F^j_{t}}(x, |Du_j|)}{|Du_j|} |Du_j|^2 |D(|Du_j|)|^2 
\end{eqnarray*}
where we used the fact that
$$D_{x_{s}}(|D\uj|)|D\uj|=\sum_{\ell, \beta} D_{x_{\ell} x_s}({\uj^{\beta}}) D_{x_{\ell}}({\uj^{\beta}}).$$
Now, by Cauchy-Schwarz' inequality, we have
\begin{eqnarray*}
\sum_{\alpha} \left [\sum_{i} D_{x_i}(u_j^{\alpha})  D_{x_i}(|Du_j|)\right ]^2 &\le & \, \sum_{i, \alpha} (D_{x_i}(u_j^{\alpha}))^2 \, \sum_i (D_{x_i}(|Du_j|))^2 \\
&\le &\, |Du_j|^2 \, |D(|Du_j|)|^2
\end{eqnarray*}
therefore, by using Kato's inequality
\begin{equation}
    \label{confD2u}
    |D(|Du_j|)|^2 \le \, |D^2 u_j|,
\end{equation}
we obtain that
\begin{eqnarray*}
I_3\!\! &=&\!\! \gamma\!\! \int_{\Omega} \eta^2 \left(\mathcal{D}_k{\uj}\right)^{\gamma-2}\chi_{\{|Du_j|\le k\}}\!\!\sum_{i,\ell,\alpha,\beta, s} F^j_{\xi_i^{\alpha} \xi_{\ell}^{\beta}}(x, D{\uj}) D_{x_{\ell} x_s}({\uj}^\beta) |D{\uj}| D_{x_i}(|D{\uj}|) D_{ x_s}({\uj}^\alpha) \, dx\\
&\ge&  \gamma\int_{\Omega} \eta^2   \left(\mathcal{D}_k{\uj}\right)^{\gamma-2}\chi_{\{|Du_j|\le k\}} {\tilde F^j_{tt}}(x, |Du_j|) \sum_{\alpha} \left [\sum_{i} D_{x_i}(u_j^{\alpha})  D_{x_i}(|Du_j|)\right ]^2 \ge 0.
\end{eqnarray*}
}

A simple calculation shows that
\begin{eqnarray*}
 I_7&=& 2m\int_{\Omega'}\eta^2  \left(\mathcal{D}_k{\uj}\right)^{\gamma}\sum_{\alpha,s}\bigl( |{\uj}|^2-a^2\bigr)^{m-1}_{+}\cdot   |D_{ x_s}({{\uj}^\alpha})|^2\,dx \cr\cr
 &+&4m(m-1)\int_{\Omega'}\eta^2  \left(\mathcal{D}_k{\uj}\right)^{\gamma}\sum_{\alpha,s}\bigl( |{\uj}|^2-a^2\bigr)^{m-2}_{+}|{\uj}^\alpha|^2\cdot   |D_{ x_s}({\uj}^\alpha)|^2\,dx\ge 0.
\end{eqnarray*}
Therefore, estimate \eqref{start} implies
\begin{eqnarray}\label{ristart}
	I_2\le |{I}_1|+|{I}_4|+|{I}_5|+|{I}_6|.
\end{eqnarray}
By Cauchy-Schwarz' inequality, Young's inequality and the right inequality in assumption  (F2), we have
\begin{eqnarray}\label{I1}
|{I}_1| &=& 2\left |\int_{\Omega}  \eta  \left(\mathcal{D}_k{\uj}\right)^{\gamma} {\sum_{i,\ell, \alpha, \beta, s}} {F^j_{\xi_i^{\alpha}\xi_{\ell}^{\beta}}}(x, D{\uj}) {D_{x_{\ell} x_s}}({{\uj}^{\beta}}) \eta_{x_i} D_{x_s}({{\uj}^{\alpha}}) \, dx\right |\cr\cr
&\le& 2\int_{\Omega}  \eta  \left(\mathcal{D}_k{\uj}\right)^{\gamma}\left \{ {\sum_{i,\ell, \alpha, \beta,s}} {F^j_{\xi_i^{\alpha} \xi_{\ell}^{\beta}}}(x, D{\uj}) \eta_{x_i} \eta_{x_j} {D_{x_s}({\uj}^{\alpha}) D_{x_s}({\uj}^{\beta})}\right \}^{1/2}  \cr\cr
&& \times \, \left \{ {\sum_{i,\ell, \alpha, \beta,s}} {F^j_{\xi_i^{\alpha} \xi_{\ell}^{\beta}}}(x, D{\uj}) {D_{x_s x_i}({\uj}^{\alpha}) \, D_{x_s x_{\ell}}({\uj}^{\beta})}\right \}^{1/2} \, dx \cr\cr
&\le &  C   \int_{\Omega}  \left(\mathcal{D}_k{\uj}\right)^{\gamma}  {\sum_{i,\ell, \alpha, \beta,s}} 
 {F^j_{\xi_i^{\alpha} \xi_{\ell}^{\beta}}}(x, D{\uj}) \eta_{x_i} \eta_{x_j} {D_{x_s}({\uj}^{\alpha}) D_{x_s}({\uj}^{\beta})} \, dx\cr\cr
&&  + \frac{1}{2} \int_{\Omega} \eta^2  \left(\mathcal{D}_k{\uj}\right)^{\gamma}  {\sum_{i,\ell, \alpha, \beta,s}} {F^j_{\xi_i^{\alpha} \xi_{\ell}^{\beta}}}(x, D{\uj}) {D_{x_s x_i}({\uj}^{\alpha}) \, D_{x_s x_{\ell}}({\uj}^{\beta}) } \, dx \cr\cr
&\le& C(j) \int_{\Omega} |D \eta|^2 \, \left(\mathcal{D}_k{\uj}\right)^{\gamma} (1 + |D{\uj}|^2)^{\frac{p}{2}}  \, dx \cr\cr
&+&\!\!\!\frac{1}{2} \int_{\Omega} \eta^2  \left(\mathcal{D}_k{\uj}\right)^{\gamma}\!\! \! {\sum_{i,\ell, \alpha, \beta,s}} {F^j_{\xi_i^{\alpha} \xi_{\ell}^{\beta}}}(x, D{\uj}) {D_{x_s x_i}({\uj}^{\alpha}) \, D_{x_s x_{\ell}}({\uj}^{\beta}) } \, dx,
\end{eqnarray}
{where  the last bound is due to the first inequality in \eqref{(VII)}}. 
We can estimate $I_4$ and $I_5$ by Cauchy-Schwartz' inequality together with the first inequality at \eqref{(V)} and Young's inequality, as follows
\begin{eqnarray}\label{I4}
|{I}_4| &\le & 2 \int_{\Omega} \eta  \left(\mathcal{D}_k{\uj}\right)^{\gamma} \sum_{i, \alpha, s} \left|F^j_{\xi_i^{\alpha} x_s}(x, D{\uj}) \eta_{x_i} {D_{x_s}({\uj}^{\alpha})}\right| \, dx\cr\cr
&{\le}&  C(j) \,  \int_{\Omega} \eta h(x) \, \left(\mathcal{D}_k{\uj}\right)^{\gamma} (1 + |D{\uj}|^2)^{\frac{p-1}{2}}  {\sum_{i,\alpha, s}} |\eta_{x_i} {D_{x_s}({\uj}^{\alpha})} | \, dx \cr\cr
&\le& C(j) \int_{\Omega} \eta|D\eta| h(x) \left(\mathcal{D}_k{\uj}\right)^{\gamma}(1 + |D\uj|^2)^{\frac{p}{2}}\,dx\cr\cr
&\le& C(j)\int_{\Omega} |D\eta|^2 \left(\mathcal{D}_k{\uj}\right)^{\gamma} (1 + |D{\uj}|^2)^{\frac{p}{2}}\,dx\\\nonumber
&&+C(j)\int_{\Omega} \eta^2 h^2(x)\left(\mathcal{D}_k{\uj}\right)^{\gamma} (1 + |D{\uj}|^2)^{\frac{p}{2}}\,dx.
\end{eqnarray}
Moreover
\begin{eqnarray}\label{I5}
|{I}_5| &=& \left| \int_{\Omega} \eta^2 \left(\mathcal{D}_k{\uj}\right)^{\gamma}{\sum_{i, \alpha, s} F^j_{\xi_i^{\alpha} x_s}}(x, D{\uj})  D_{x_s x_i}({{\uj}^{\alpha}}) \, dx \right| \cr\cr
&{\le}& C(j) \int_{\Omega} \eta^2\,h(x)\left(\mathcal{D}_k{\uj}\right)^{\gamma} (1 + |D{\uj}|^2)^{\frac{p-1}{2}}  \left|{\sum_{i, \alpha, s} D_{x_s x_i}({\uj}^{\alpha})} \right|\, dx \cr\cr
&\le&\, C(j) \, \int_{\Omega} \eta^2 h(x) \left(\mathcal{D}_k{\uj}\right)^{\gamma}(1 + |D{\uj}|^2)^{\frac{p-1}{2}}  |D^2 {\uj}| \, dx \cr\cr
&=& C(j)\int_{\Omega} \left[\eta^2 \left(\mathcal{D}_k{\uj}\right)^{\gamma}(1+|D{\uj}|^2)^{\frac{p-2}{2}} |D^2 {\uj}|^2 \right]^{\frac{1}{2}} \left[\eta^2 h^2(x) \left(\mathcal{D}_k{\uj}\right)^{\gamma}(1+|D{\uj}|^2)^{\frac{ p}{2}}\right]^{\frac{1}{2}} \, dx \cr\cr
&\le& \varepsilon \int_{\Omega} \eta^2 \left(\mathcal{D}_k{\uj}\right)^{\gamma}(1 + |D{\uj}|^2)^{\frac{p-2}{2}}  |D^2 {\uj}|^2 \, dx\\\nonumber&& + C_\varepsilon(j)  \int_{\Omega} \eta^2 h^2(x)\left(\mathcal{D}_k{\uj}\right)^{\gamma}(1 + |D{\uj}|^2)^{\frac{ p}{2}} \, dx,
\end{eqnarray}
where $\varepsilon>0$ will be chosen later.
Finally, similar arguments give
\begin{eqnarray}\label{I6}|{I}_6| &=&   \gamma \,\left| \int_{\Omega} \eta^2 \chi_{\{|Du_j|\le k\}} \left(\mathcal{D}_k{\uj}\right)^{\gamma-2}  |D{\uj}| {\sum_{i, \alpha, s}  F^j_{\xi_i^{\alpha} x_s}}(x, D{\uj})  D_{x_i}(|D{\uj}|) {D_{x_s}({\uj}^{\alpha})} \, dx\right|\cr\cr
&\le&  \, \gamma \, \int_{\Omega} \eta^2 \chi_{\{|Du_j|\le k\}} \left(\mathcal{D}_k{\uj}\right)^{\gamma-1}\cr\cr   &&\qquad\cdot\sum_{i, \alpha, s}\left| F^j_{\xi_i^{\alpha} x_s}(x, D{\uj})  D_{x_i}(|D{\uj}|) {D_{x_s}({\uj}^{\alpha})} \right|\, dx\cr\cr
&\le&  C (j)\, \gamma \, \int_{\Omega} \eta^2  \chi_{\{|Du_j|\le k\}} \left(\mathcal{D}_k{\uj}\right)^{p-2+\gamma}  \, h(x) \, {\sum_{i,\alpha, s}^n D_{x_i}(|D{\uj}|) D_{x_s}(u_j^{\alpha})} \, dx\cr\cr
&\le&  C (j)\, \gamma \, \int_{\Omega} \, \eta^2 \chi_{\{|Du_j|\le k\}} \left(\mathcal{D}_k{\uj}\right)^{p-1+\gamma}   |D^2 {\uj}| \, h(x)\, dx\cr\cr
&\le& \varepsilon \int_{\Omega} \eta^2 \left(\mathcal{D}_k{\uj}\right)^{\gamma} |D^2u_j|^2 (1 + |D{\uj}|^2)^{\frac{p-2}{2}}  \, dx\\\nonumber
&&+ C_\varepsilon(j) \gamma^2 \,  \, \int_{\Omega} \eta^2  h^2(x)\left(\mathcal{D}_k{\uj}\right)^{\gamma} {(1 + |D{\uj}|^2)^{\frac{p}{2}}} \, dx,
\end{eqnarray}
where  the constants $C$ and $C_{\varepsilon}(j)$ are independent of $\gamma$  {and where in the third inequality we used the Cauchy-Schwarz' inequality and \eqref{confD2u}}.
Plugging  \eqref{I1}, \eqref{I4}, \eqref{I5}, \eqref{I6} in \eqref{ristart} we obtain
\begin{eqnarray*}
&& \int_{\Omega} \eta^2 \left(\mathcal{D}_k{\uj}\right)^{\gamma}{\sum_{i,\ell, \alpha, \beta, s} F_{\xi_i^{\alpha} \xi_{\ell}^{\beta}}}(x, D{\uj}) {D_{x_{\ell} x_s}({\uj}^{\beta})  D_{x_s x_i}({\uj}^{\alpha})}\, dx\cr\cr
&\le&  \frac{1}{2} \int_{\Omega} \eta^2  \left(\mathcal{D}_k{\uj}\right)^{\gamma}  \, {\sum_{i,\ell, \alpha, \beta, s} F_{\xi_i^{\alpha} \xi_{\ell}^{\beta}}}(x, D{\uj}) {D_{x_{\ell} x_s}({\uj}^{\beta})  D_{x_s x_i}({\uj}^{\alpha})} \, dx \cr\cr
&&+2\varepsilon \int_{\Omega} \eta^2 \left(\mathcal{D}_k{\uj}\right)^{\gamma}(1 + |D{\uj}|^2)^{\frac{p-2}{2}} |D^2 {\uj}|^2 \, dx \cr\cr
&&+ C_\varepsilon(j) (1+\gamma^2)  \, \int_{\Omega} \eta^2  h^2(x)\left(\mathcal{D}_k{\uj}\right)^{\gamma}(1 + |D{\uj}|^2)^{\frac{p}{2} }\, dx\cr\cr
&&+C(j) \int_{\Omega} |D \eta|^2 \,  \left(\mathcal{D}_k{\uj}\right)^{\gamma}(1 + |D{\uj}|^2)^{\frac{p}{2}}  \, dx.
\end{eqnarray*}
Reabsorbing the first integral in the right hand side by the left hand side, we get
\begin{eqnarray*}
&& \frac{1}{2}\int_{\Omega} \eta^2\left(\mathcal{D}_k{\uj}\right)^{\gamma}{\sum_{i,\ell, \alpha, \beta, s} F_{\xi_i^{\alpha} \xi_{\ell}^{\beta}}}(x, D{\uj}) {D_{x_{\ell} x_s}({\uj}^{\beta})  D_{x_s x_i}({\uj}^{\alpha})}\, dx\cr\cr
 &\le &2\varepsilon \int_{\Omega} \eta^2 \left(\mathcal{D}_k{\uj}\right)^{\gamma}(1 + |D{\uj}|^2)^{\frac{p-2}{2}}  |D^2 {\uj}|^2 \, dx \cr\cr
&&+ C_\varepsilon(j) (1+\gamma^2) \,  \int_{\Omega} \eta^2  h^2(x)\left(\mathcal{D}_k{\uj}\right)^{\gamma}(1 + |D{\uj}|^2)^{\frac{
p}{2} }  \, dx\cr\cr
&&+C(j) \int_{\Omega} |D \eta|^2 \,  \left(\mathcal{D}_k{\uj}\right)^{\gamma}(1 + |D{\uj}|^2)^{\frac{p}{2}} \, dx.
\end{eqnarray*}
Using \eqref{(III)} in the left hand side of previous estimate, we obtain
\begin{eqnarray*}
&& \bar\nu\int_{\Omega} \eta^2\left(\mathcal{D}_k{\uj}\right)^{\gamma} (1 + |D{\uj}|^2)^{\frac{p-2}{2}}  |D^2 {\uj}|^2  \, dx\cr\cr
 &\le &2\varepsilon \int_{\Omega} \eta^2\left(\mathcal{D}_k{\uj}\right)^{\gamma} (1 + |D{\uj}|^2)^{\frac{p-2}{2}}  |D^2 {\uj}|^2 \, dx \cr\cr
&&+ C_\varepsilon(j) (1+\gamma^2) \, \int_{\Omega} \eta^2  h^2(x)\left(\mathcal{D}_k{\uj}\right)^{\gamma}(1 + |D{\uj}|^2)^{\frac{p}{2} }  \, dx\cr\cr
&&+C(j) \int_{\Omega} |D \eta|^2 \left(\mathcal{D}_k{\uj}\right)^{\gamma}  (1 + |D{\uj}|^2)^{\frac{p}{2}}  \, dx.
\end{eqnarray*}
Choosing $\varepsilon=\frac{ \bar\nu}{4}$, we can reabsorb the first integral in the right hand side by the left hand side thus getting
\begin{eqnarray}\label{Caccioppoli3a}
&& \int_{\Omega} \eta^2 \left(\mathcal{D}_k{\uj}\right)^{\gamma}(1 + |D{\uj}|^2)^{\frac{p-2}{2}}  |D^2 {\uj}|^2  \, dx\cr\cr
 &\le & C(j) (1+\gamma^2)  \, \int_{\Omega} \eta^2  h^2(x)\left(\mathcal{D}_k{\uj}\right)^{\gamma}(1 + |D{\uj}|^2)^{\frac{p}{2} }
 \, dx\cr\cr
&&+C(j) \int_{\Omega} |D \eta|^2 \,  \left(\mathcal{D}_k{\uj}\right)^{\gamma}(1 + |D{\uj}|^2)^{\frac{p}{2}}  \, dx,
\end{eqnarray}
with a constant $C$ dependent on $j$ but independent of $m$ and $\gamma$.

Let $0<r<R$, with $B_R\Subset\Omega'$ and fix $\eta\in C^1_0(B_{R})$ such that $\eta=1$ on $B_{r}$ and $|D\eta|\le \frac{C}{R-r}$ so that \eqref{Caccioppoli3a} implies
\begin{eqnarray*}\label{Caccioppoli3}
 && \int_{B_R} \eta^2 \left(\mathcal{D}_k{\uj}\right)^{\gamma}(1 + |D\uj|^2)^{\frac{p-2}{2}}  |D^2 \uj|^2 \, dx\cr\cr
&\le & C(j) (1+\gamma^2)  \, \int_{B_R}   h^2(x)\left(\mathcal{D}_k{\uj}\right)^{\gamma}(1 + |D\uj|^2)^{\frac{p}{2} }  \, dx\cr\cr
&&+\frac{C(j)}{(R-r)^2} \int_{B_R}  \,  \left(\mathcal{D}_k{\uj}\right)^{\gamma}(1 + |D\uj|^2)^{\frac{p}{2}}\, dx.
\end{eqnarray*}
The higher integrability result of Theorem \ref{premhighdiff}, recalling  the  assumption $h\in \LL^r_{\mathrm{loc}}(\Omega)$ and   choosing $\gamma$ such that  $(p+\gamma)\frac{r}{r-2}<\frac{m}{m+1}(p+2)$, i.e.  $\gamma<\frac{r-2}{r}\frac{m}{m+1}(p+2)-p$, allows us to pass to the limit as $k\to +\infty$ in both sides of previous estimate thus getting
\begin{eqnarray*}
 && \int_{B_R} \eta^2 (1 + |D\uj|^2)^{\frac{p-2+\gamma}{2}}  |D^2 \uj|^2 \, dx\cr\cr
&\le & C(j) (1+\gamma^2)  \, \int_{B_R}   h^2(x)(1 + |D\uj|^2)^{\frac{p+\gamma}{2} }  \, dx\cr\cr
&&+\frac{C(j)}{(R-r)^2} \int_{B_R}  \,  (1 + |D\uj|^2)^{\frac{p+\gamma}{2}}\, dx,
\end{eqnarray*}
since the sequence $\mathcal{D}_k{\uj}$ converges monotonically to $(1 + |D\uj|^2)^{\frac{1}{2}}$.
 Note that, by assumption \eqref{erre}, we may choose $m>\frac{rp}{2(r-p-2)}$ in order to have  $\gamma>0$.
\\
Using the Sobolev inequality in the left hand side of the previous estimate, we get
\begin{eqnarray}\label{Caccioppoli3}
 && \left ( \int_{B_{R}}  \eta^{2^*}(1 + |D\uj|^2)^{\left(\frac{p+\gamma}{4} \right) 2^*}\, dx\right )^{\frac{2}{2^*}} \cr\cr
&\le & C(j) (1+\gamma^2)  \, \int_{B_R} \eta^2  h^2(x)(1 + |D\uj|^2)^{\frac{p+\gamma}{2} }  \, dx\cr\cr
&&+\frac{C(j)}{(R-r)^2} \int_{\Omega'}   (1 + |D\uj|^2)^{\frac{p+\gamma}{2}}\, dx,
\end{eqnarray}
where we used the customary notation
\begin{equation}\label{sobexp}
 2^*=\begin{cases}
 \frac{2n}{n-2}\qquad\qquad\qquad\qquad\text{if}\,\,\, n>2\\
 \text{any finite exponent}\qquad \text{if}\,\,\, n=2.
\end{cases}   
\end{equation}
Since $h\in{\LL}^r_{\loc}(\Omega)$ with $r>n$, there exists $\vartheta\in (0,1)$ such that
$$\vartheta+\frac{2(1-\vartheta)}{2^*}+\frac{2}{r}=1\quad\Longleftrightarrow\quad \vartheta=1-\frac{2}{r}\frac{2^*}{2^*-2} $$ 
and therefore we use the interpolation inequality  to estimate the first integral in the right hand side of \eqref{Caccioppoli3}, as follows
\begin{eqnarray*}
  &&  \int_{B_R} \eta^2  h^2(x)(1 + |D\uj|^2)^{\frac{p+\gamma}{2} }  \, dx\cr\cr &=& \int_{B_R} \eta^2  h^2(x)(1 + |D\uj|^2)^{\left(\frac{p+\gamma}{2} \right)\vartheta}(1 + |D\uj|^2)^{\left(\frac{p+\gamma}{2} \right)(1-\vartheta)}  \, dx \cr\cr &\le& \left(\int_{B_R}   h^r\,dx\right)^{\frac2r}\left(\int_{B_R} \eta^2 (1 + |D\uj|^2)^{\frac{p+\gamma}{2}}\,dx\right)^\vartheta\cr\cr 
  &&\cdot\left(\int_{B_R} \eta^{2^*} (1 + |D\uj|^2)^{(\frac{p+\gamma}{4} ) 2^*}  \, dx\right)^{\frac{2(1-\vartheta)}{2^*}}.
\end{eqnarray*}
Inserting previous inequality in \eqref{Caccioppoli3}, by virtue of Young's inequality we obtain
\begin{eqnarray*}\label{Caccioppoli4}
 && \left ( \int_{B_{R}}  \eta^{2^*}(1 + |D\uj|^2)^{\left(\frac{p+\gamma}{4} \right) 2^*}\, dx\right )^{\frac{2}{2^*}} \cr\cr
&\le &\frac12\left(\int_{B_R} \eta^{2^*} (1 + |D\uj|^2)^{\left(\frac{p+\gamma}{4} \right) 2^*}  \, dx\right)^{\frac{2}{2^*}}\cr\cr
&&\quad+ {C(j, \vartheta)} (1+\gamma^2)^{{\frac{1}{\vartheta}}}   \left(\int_{B_R}   h^r\,dx\right)^{\frac{2}{r\vartheta}} \int_{B_R} \eta^2 (1 + |D\uj|^2)^{\frac{p+\gamma}{2}}\,dx \cr\cr 
&&+\frac{C(j)}{(R-r)^2} \int_{\Omega'}   (1 + |D\uj|^2)^{\frac{p+\gamma}{2}}\, dx.
\end{eqnarray*}
Reabsorbing the first integral in the right hand side by the left hand side, we get
\begin{eqnarray}\label{Caccioppoli4}
 && \left ( \int_{B_{R}}  \eta^{2^*}(1 + |D\uj|^2)^{\left(\frac{p+\gamma}{4} \right) 2^*}\, dx\right )^{\frac{2}{2^*}} \cr\cr
&\le &C(j) (1+\gamma^2)^{{\frac{1}{\vartheta}}}   \left(\int_{B_R}   h^r\,dx\right)^{{\frac{2}{r \vartheta}}}\int_{B_R} \eta^2 (1 + |D\uj|^2)^{\frac{p+\gamma}{2}}\,dx\cr\cr 
&&+\frac{C(j)}{(R-r)^2} \int_{\Omega'}   (1 + |D\uj|^2)^{\frac{p+\gamma}{2}}\, dx.
\end{eqnarray}
At this point it is quite standard to start the usual Moser    iteration  procedure to conclude with the desired Lipschitz continuity.
\end{proof}



\medskip
\medskip
\subsection{A Caccioppoli type inequality}

\label{tre.quattro}

This subsection is devoted to the proof of a second order  Caccioppoli type inequality for the approximating minimizers. It is worth mentioning that such inequality  is available in \cite{GPdN} and it is also the first part of the proof of Theorem \ref{lipschitz}, but the main point here is that we are going to establish it with  constants  independent  of the approximation parameters. More precisely, we have the following
\begin{lemma}\label{Caccio}
Let $\uj\in  \WW^{1,p}(\Omega'; \mathbb{R}^N)\cap \LL^{2m}(\Omega'; \mathbb{R}^N)$ be  a local minimizer of the functional $\mathfrak{F}^j(u,\Omega')$. Then, 
the following second order Caccioppoli type inequality
\begin{eqnarray}\label{Caccioppoli}
 && \int_{\Omega'} \eta^2 (1 + |D\uj|^2)^{\frac{p-2+\gamma}{2}} |D^2 {\uj}|^2 \, dx\cr\cr
&\le & C (1+\gamma^2)  \, \int_{\Omega'} \eta^2  h^2(x)(1 + |D{\uj}|^2)^{\frac{2q -p+\gamma}{2} } \, dx\cr\cr
&&+C \int_{\Omega'} |D \eta|^2 \,  (1 + |D{\uj}|^2)^{\frac{q+\gamma}{2}} \, dx,
\end{eqnarray}
holds true for every $\gamma \ge \, 0$ and for every $\eta\in C^1_0(\Omega)$, with a constant $C$ independent of $j$.
\end{lemma}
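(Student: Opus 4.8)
The plan is to repeat, almost verbatim, the computation performed in the first part of the proof of Theorem~\ref{lipschitz}; the single but decisive change is that at each occurrence of a bound on the second derivatives of $F^j$ we invoke the \emph{second} estimate in \eqref{(VII)} and \eqref{(V)}, namely $|F^j_{\xi\xi}(x,\xi)|\le C(\tilde L)(1+|\xi|^2)^{\frac{q-2}{2}}$ and $|F^j_{x\xi}(x,\xi)|\le C\,h(x)(1+|\xi|^2)^{\frac{q-1}{2}}$, whose constants are independent of $j$. This is precisely what turns the growth exponent $\frac{p+\gamma}{2}$ of \eqref{Caccioppoli3a} into the exponents $\frac{q+\gamma}{2}$ and $\frac{2q-p+\gamma}{2}$ appearing in \eqref{Caccioppoli}. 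Since Theorems~\ref{premhighdiff} and \ref{lipschitz} already give $\uj\in \WW^{2,2}_{\loc}(\Omega';\mathbb{R}^N)\cap \WW^{1,\infty}_{\loc}(\Omega';\mathbb{R}^N)$, all the integrals below are finite for every $\gamma\ge0$, which is what allows us to handle arbitrary $\gamma$ (no restriction as in the proof of Theorem~\ref{lipschitz}).

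Concretely, I would start from the second variation \eqref{second_variation} of $\mathfrak{F}^j$ and test it, for $s=1,\dots,n$, with $\varphi^\alpha=\eta^2(\mathcal D_k\uj)^\gamma D_{x_s}(\uj^\alpha)$, where $\mathcal D_k\uj$ is the truncation \eqref{troncata} and $\eta\in C^1_0(\Omega')$; summing over $s$ gives the same splitting $I_1+\dots+I_7=0$ as in the proof of Theorem~\ref{lipschitz}. Exactly as there, $I_7\ge0$ by direct inspection and $I_3\ge0$ because $\tilde F^j_{tt}\ge0$ (a consequence of \eqref{(III)} with $\lambda=\xi$); dropping these two terms and bounding $I_2$ from below via \eqref{(III)} leaves
\[
\bar\nu\int_{\Omega'}\eta^2(\mathcal D_k\uj)^\gamma(1+|D\uj|^2)^{\frac{p-2}{2}}|D^2\uj|^2\,dx\le |I_1|+|I_4|+|I_5|+|I_6|.
\]
Then I would estimate $I_1,I_4,I_5,I_6$ by Cauchy--Schwarz and Young's inequalities along the lines of \eqref{I1}, \eqref{I4}, \eqref{I5}, \eqref{I6}, using the $q$-growth bounds above and \eqref{confD2u}. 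The term $I_1$ produces $\tfrac12\int\eta^2(\mathcal D_k\uj)^\gamma F^j_{\xi\xi}(\cdot,D\uj)[D^2\uj]^2$, reabsorbed into the left-hand side, plus $C\int|D\eta|^2(\mathcal D_k\uj)^\gamma(1+|D\uj|^2)^{\frac q2}$; $I_4$ is controlled by the same two types of terms (note $(1+|D\uj|^2)^{\frac q2}\le(1+|D\uj|^2)^{\frac{2q-p}{2}}$); and Young's inequality with a small parameter $\varepsilon$ splits $I_5$ and $I_6$ into $\varepsilon\int\eta^2(\mathcal D_k\uj)^\gamma(1+|D\uj|^2)^{\frac{p-2}{2}}|D^2\uj|^2$ plus $C_\varepsilon(1+\gamma^2)\int\eta^2 h^2(\mathcal D_k\uj)^\gamma(1+|D\uj|^2)^{\frac{2q-p}{2}}$, the exponent $\tfrac{2q-p}{2}$ arising from balancing $(1+|D\uj|^2)^{\frac{q-1}{2}}h$ against $(1+|D\uj|^2)^{\frac{p-2}{4}}|D^2\uj|$ through the identity $(q-1)-\tfrac{p-2}{2}=\tfrac{2q-p}{2}$. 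Choosing $\varepsilon$ as a suitable fraction of $\bar\nu$ and reabsorbing, then using $(\mathcal D_k\uj)^\gamma\le(1+|D\uj|^2)^{\frac\gamma2}$ on the right, one obtains \eqref{Caccioppoli} with $\mathcal D_k\uj$ still on the left and with a constant $C$ depending only on $n,N,p,\nu,\tilde L$ (in particular independent of $j$, $m$ and $k$); a final passage to the limit $k\to\infty$ by the monotone convergence theorem — legitimate since $\uj\in\WW^{1,\infty}_{\loc}$ makes the right-hand side finite and $\mathcal D_k\uj\nearrow(1+|D\uj|^2)^{1/2}$ — yields \eqref{Caccioppoli}.

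I do not expect any genuinely new difficulty with respect to Theorem~\ref{lipschitz}: the whole content, and the only point requiring care, is the exponent bookkeeping — verifying that replacing the $p$-growth bounds on $F^j_{\xi\xi},F^j_{x\xi}$ by the $q$-growth ones (the only ones with $j$-independent constants) produces precisely the exponents $\frac{q+\gamma}{2}$ and $\frac{2q-p+\gamma}{2}$ of \eqref{Caccioppoli}, and that all reabsorptions still go through with $\varepsilon$ (hence the final constant) chosen independently of $j$, $m$ and $\gamma$. That the resulting constant depends only on the structural constants $n,N,p,\nu,\tilde L$ is then transparent from the chain of estimates.
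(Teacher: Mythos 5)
Your proposal is correct and follows essentially the same route as the paper: test the second variation \eqref{second_variation} with a weighted copy of $D_{x_s}u_j$, discard the nonnegative terms $I_3$ and $I_7$, and redo the estimates of Theorem~\ref{lipschitz} using the second ($j$-independent) inequalities in \eqref{(VII)} and \eqref{(V)}, which is exactly what produces the exponents $\frac{q+\gamma}{2}$ and $\frac{2q-p+\gamma}{2}$. The only (harmless) difference is that the paper, having the Lipschitz bound of Theorem~\ref{lipschitz} in hand, tests directly with the untruncated weight $\eta^2(1+|Du_j|^2)^{\gamma/2}D_{x_s}u_j^\alpha$, whereas you keep the truncation $\mathcal{D}_k u_j$ and pass to the limit $k\to\infty$ by monotone convergence at the end; both versions are legitimate and yield the same constant.
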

\begin{proof}
For $\eta \in \mathcal{C}^1_0(\Omega')$ and $\gamma \ge 0$, by the Lipschitz regularity of $u_j$ proven in Theorem \ref{lipschitz} and the higher differentiability result of Theorem \ref{premhighdiff},  we can test \eqref{second_variation} with the function  $\varphi^\alpha=\eta^2
\left(1+|D\uj|^2\right)^{\frac{\gamma}{2}} D_{x_s}({\uj}^\alpha)$. Arguing exactly as done in Theorem \ref{lipschitz} until inequality 
\eqref{ristart}, we arrive at
\begin{equation}\label{restart2}
 |\tilde I_2|\le |\tilde I_1|+|\tilde I_4|+|\tilde I_5|+|\tilde I_6|,    
\end{equation}
where
$$\tilde I_2=\int_{\Omega} \eta^2 \left(1+|D\uj|^2\right)^{\frac{\gamma}{2}}\sum_{i,\ell,\alpha,\beta} F^j_{\xi_i^{\alpha} \xi_{\ell}^{\beta}}(x, D{\uj}) D_{x_{\ell} x_s}({\uj}^\beta)  D_{x_sx_i}({\uj}^\alpha) \, dx$$
$$\tilde I_1=2\int_{\Omega}  \eta  (1 + |D{\uj}|^2)^{\frac{\gamma}{2}} {\sum_{i,\ell, \alpha, \beta, s}} {F^j_{\xi_i^{\alpha}\xi_{\ell}^{\beta}}}(x, D{\uj}) {D_{x_{\ell} x_s}}({{\uj}^{\beta}}) \eta_{x_i} D_{x_s}({{\uj}^{\alpha}}) \, dx$$
$$\tilde I_4=2 \int_{\Omega} \eta  (1 + |D{\uj}|^2)^{\frac{\gamma}{2}}  {\sum_{i, \alpha, s} F^j_{\xi_i^{\alpha} x_s}}(x, D{\uj}) \eta_{x_i} {D_{x_s}({\uj}^{\alpha})} \, dx$$
$$\tilde I_5= \int_{\Omega} \eta^2 (1 + |D{\uj}|^2)^{\frac{\gamma}{2}}{\sum_{i, \alpha, s} F^j_{\xi_i^{\alpha} x_s}}(x, D{\uj})  D_{x_s x_i}({{\uj}^{\alpha}}) \, dx  $$
and
$$\tilde I_6=\gamma \, \int_{\Omega}  (1+|D{\uj}|^2)^{\frac{\gamma}{2}-1} {\sum_{i, \alpha, s}F^j_{\xi_i^{\alpha} x_s}}(x, D{\uj}) \eta^2   |D{\uj}| D_{x_i}(|D{\uj}|) {D_{x_s}({\uj}^{\alpha})} \, dx.$$
\\
By the Cauchy-Schwartz inequality,  Young's inequality and the second inequality in \eqref{(VII)}, we have
\begin{eqnarray}\label{I1Caccioppoli}
|\tilde{I}_1| &=& 2\left |\int_{\Omega}  \eta  (1 + |D{\uj}|^2)^{\frac{\gamma}{2}} {\sum_{i,\ell, \alpha, \beta, s}} {F^j_{\xi_i^{\alpha}\xi_{\ell}^{\beta}}}(x, D{\uj}) {D_{x_{\ell} x_s}}({{\uj}^{\beta}}) \eta_{x_i} D_{x_s}({{\uj}^{\alpha}}) \, dx\right |\cr\cr
&\le& 2\int_{\Omega}  \eta  (1 + |D{\uj}|^2)^{\frac{\gamma}{2}}\left \{ {\sum_{i,\ell, \alpha, \beta,s}} {F^j_{\xi_i^{\alpha} \xi_{\ell}^{\beta}}}(x, D{\uj}) \eta_{x_i} \eta_{x_j} {D_{x_s}({\uj}^{\alpha}) D_{x_s}({\uj}^{\beta})}\right \}^{1/2}  \cr\cr
&& \times \, \left \{ {\sum_{i,\ell, \alpha, \beta,s}} {F^j_{\xi_i^{\alpha} \xi_{\ell}^{\beta}}}(x, D{\uj}) {D_{x_s x_i}({\uj}^{\alpha}) \, D_{x_s x_{\ell}}({\uj}^{\beta})}\right \}^{1/2} \, dx \cr\cr
&\le &  C   \int_{\Omega}  (1 + |D{\uj}|^2)^{\frac{\gamma}{2}} {\sum_{i,\ell, \alpha, \beta,s}} {F^j_{\xi_i^{\alpha} \xi_{\ell}^{\beta}}}(x, D{\uj}) \eta_{x_i} \eta_{x_j} {D_{x_s}({\uj}^{\alpha}) D_{x_s}({\uj}^{\beta})} \, dx\cr\cr
&&  + \frac{1}{2} \int_{\Omega} \eta^2  (1 + |D{\uj}|^2)^{\frac{\gamma}{2}}  {\sum_{i,\ell, \alpha, \beta,s}} {F^j_{\xi_i^{\alpha} \xi_{\ell}^{\beta}}}(x, D{\uj}) {D_{x_s x_i}({\uj}^{\alpha}) \, D_{x_s x_{\ell}}({\uj}^{\beta}) } \, dx \cr\cr
&\le& C(\tilde L) \int_{\Omega} |D \eta|^2 \,  (1 + |D{\uj}|^2)^{\frac{q+\gamma}{2}}  \, dx \cr\cr
& +&\!\!\! \frac{1}{2} \int_{\Omega} \eta^2  (1 + |D{\uj}|^2)^{\frac{\gamma}{2}} \!\! \! {\sum_{i,\ell, \alpha, \beta,s}} {F^j_{\xi_i^{\alpha} \xi_{\ell}^{\beta}}}(x, D{\uj}) {D_{x_s x_i}({\uj}^{\alpha}) \, D_{x_s x_{\ell}}({\uj}^{\beta}) } \, dx.
\end{eqnarray} 
We can estimate the fourth and the fifth term by Cauchy-Schwartz' inequality together with the second inequality in \eqref{(V)}, and  Young's inequality,  as follows
\begin{eqnarray}\label{I4Caccioppoli}
|\tilde{I}_4| &=& \left|2 \int_{\Omega} \eta  (1 + |D{\uj}|^2)^{\frac{\gamma}{2}}  {\sum_{i, \alpha, s} F^j_{\xi_i^{\alpha} x_s}}(x, D{\uj}) \eta_{x_i} {D_{x_s}({\uj}^{\alpha})} \, dx\right|\cr\cr
&{\le}&  C \,  \int_{\Omega} \eta h(x) \,  (1 + |D{\uj}|^2)^{\frac{q-1+\gamma}{2}}  {\sum_{i,\alpha, s}} |\eta_{x_i} {D_{x_s}({\uj}^{\alpha})} | \, dx \cr\cr
&\le&   C \,  \int_{\Omega} \eta |D\eta| |D{\uj}| \, h(x) \, (1 + |D{\uj}|^2)^{\frac{q-1+\gamma}{2}}  \, dx \cr\cr
&\le& C\int_{\Omega} |D\eta|^2  (1 + |D{\uj}|^2)^{\frac{p+\gamma}{2}}\,dx\\\nonumber
&&+C\int_{\Omega} \eta^2 h^2(x) (1 + |D{\uj}|^2)^{\frac{2q-p+\gamma}{2}}\,dx.
\end{eqnarray}
Moreover
\begin{eqnarray}\label{I5Caccioppoli}
|\tilde{I}_5| &=& \left| \int_{\Omega} \eta^2 (1 + |D{\uj}|^2)^{\frac{\gamma}{2}}{\sum_{i, \alpha, s} F^j_{\xi_i^{\alpha} x_s}}(x, D{\uj})  D_{x_s x_i}({{\uj}^{\alpha}}) \, dx \right| \cr\cr
&{\le}& C \int_{\Omega} \eta^2\,h(x) (1 + |D{\uj}|^2)^{\frac{q-1+\gamma}{2}}  \left|{\sum_{i, \alpha, s} D_{x_s x_i}({\uj}^{\alpha})} \right|\, dx \cr\cr
&\le&\, C \, \int_{\Omega} \eta^2 h(x) (1 + |D{\uj}|^2)^{\frac{q-1+\gamma}{2}}   |D^2 {\uj}| \, dx \cr\cr
&=& C\int_{\Omega} \left[\eta^2 (1+|D{\uj}|^2)^{\frac{p-2+\gamma}{2}} |D^2 {\uj}|^2 \right]^{\frac{1}{2}} \left[\eta^2 (1+|D{\uj}|^2)^{\frac{2q - p+\gamma}{2}} h^2(x)\right]^{\frac{1}{2}} \, dx \cr\cr
&\le& \varepsilon \int_{\Omega} \eta^2 (1 + |D{\uj}|^2)^{\frac{p-2+\gamma}{2}} |D^2 {\uj}|^2 \, dx\\\nonumber&& + C_\varepsilon  \int_{\Omega} \eta^2 h^2(x)(1 + |D{\uj}|^2)^{\frac{2q - p+\gamma}{2}} \, dx,
\end{eqnarray}
where $\varepsilon>0$ will be chosen later.
Finally, we have
\begin{eqnarray}\label{I6Caccioppoli}
|\tilde{I}_6| &=&   \gamma \,\left| \int_{\Omega} {\sum_{i, \alpha, s} F^j_{\xi_i^{\alpha} x_s}}(x, D{\uj}) \eta^2 (1+|D{\uj}|^2)^{\frac{\gamma}{2}-1}   |D{\uj}| D_{x_i}(|D{\uj}|) {D_{x_s}({\uj}^{\alpha})} \, dx\right|\cr\cr
&\le&  \, \gamma \, \int_{\Omega} \eta^2 (1+|D{\uj}|^2)^{\frac{\gamma-1}{2}}\cr\cr   &&\qquad\cdot{\sum_{i, \alpha, s}\left| F^j_{\xi_i^{\alpha} x_s}(x, D{\uj})  D_{x_i}(|D{\uj}|) {D_{x_s}({\uj}^{\alpha})} \right|}\, dx\cr\cr
&\le&  \, C\gamma \, \int_{\Omega} \eta^2 \, h(x) \, (1 + |D{\uj}|^2)^{\frac{q+\gamma-2}{2}}   \sum_{i,\alpha, s}^n \left|D_{x_i}(|D{\uj}|) D_{x_s}(u_j^{\alpha}) \right|\, dx\cr\cr
&\le& C \, \gamma \, \int_{\Omega} \, \eta^2 \, h(x)\,(1 + |D{\uj}|^2)^{\frac{q+\gamma}{2}}  |D^2 {\uj}|  dx\cr\cr
&\le& \varepsilon \int_{\Omega} \eta^2 |D^2u_j|^2 (1 + |D{\uj}|^2)^{\frac{p-2+\gamma}{2}}   \, dx\\\nonumber
&&+ C_\varepsilon \gamma^2 \,  \, \int_{\Omega} \eta^2  h^2(x) (1 + |D{\uj}|^2)^{\frac{2q -p+\gamma}{2}}\, dx,
\end{eqnarray}
where all the constants $C$ and $C_{\varepsilon}$ are independent of $\gamma$, { of $j$ and $m$} {and where in the third inequality we used Cauchy-Schwarz' inequality and \eqref{confD2u}}.
Plugging  \eqref{I1Caccioppoli}, \eqref{I4Caccioppoli}, \eqref{I5Caccioppoli}, \eqref{I6Caccioppoli} in \eqref{restart2} we obtain
\begin{eqnarray*}
&& \int_{\Omega} \eta^2 
(1 + |Du_j|^2)^{\frac{\gamma}{2}}{\sum_{i,\ell, \alpha, \beta, s} F_{\xi_i^{\alpha} \xi_{\ell}^{\beta}}}(x, D{\uj}) {D_{x_{\ell} x_s}({\uj}^{\beta})  D_{x_s x_i}({\uj}^{\alpha})}\, dx\cr\cr
&\le&  \frac{1}{2} \int_{\Omega} \eta^2  (1 + |Du_j|^2)^{\frac{\gamma}{2}} \, {\sum_{i,\ell, \alpha, \beta, s} F_{\xi_i^{\alpha} \xi_{\ell}^{\beta}}}(x, D{\uj}) {D_{x_{\ell} x_s}({\uj}^{\beta})  D_{x_s x_i}({\uj}^{\alpha})} \, dx \cr\cr
&&+2\varepsilon \int_{\Omega} \eta^2 (1 + |D{\uj}|^2)^{\frac{p-2+\gamma}{2}}  |D^2 {\uj}|^2 \, dx \cr\cr
&&+ C_\varepsilon (1+\gamma^2)  \, \int_{\Omega} \eta^2  h^2(x)(1 + |D{\uj}|^2)^{\frac{2q -p+\gamma}{2} }  \, dx\cr\cr
&&+C_\varepsilon \int_{\Omega} |D \eta|^2 \,  (1 + |D{\uj}|^2)^{\frac{q+\gamma}{2}}  \, dx.
\end{eqnarray*}
Reabsorbing the first integral in the right hand side by the left hand side we get
\begin{eqnarray*}
&& \frac{1}{2}\int_{\Omega} \eta^2(1 + |Du_j|^2)^{\frac{\gamma}{2}} {\sum_{i,\ell, \alpha, \beta, s} F_{\xi_i^{\alpha} \xi_{\ell}^{\beta}}}(x, D{\uj}) {D_{x_{\ell} x_s}({\uj}^{\beta})  D_{x_s x_i}({\uj}^{\alpha})}\, dx\cr\cr
 &\le &2\varepsilon \int_{\Omega} \eta^2 (1 + |D{\uj}|^2)^{\frac{p-2+\gamma}{2}} |D^2 {\uj}|^2 \, dx \cr\cr
&&+ C_\varepsilon (1+\gamma^2) \,  \int_{\Omega} \eta^2  h^2(x)(1 + |D{\uj}|^2)^{\frac{2q -p+\gamma}{2} }  \, dx\cr\cr
&&+C_\varepsilon \int_{\Omega} |D \eta|^2 \,  (1 + |D{\uj}|^2)^{\frac{q+\gamma}{2}}  \, dx.
\end{eqnarray*}
Using \eqref{(III)} in the left hand side of previous estimate, we obtain
\begin{eqnarray*}
&& \bar \nu\int_{\Omega} \eta^2 (1 + |D{\uj}|^2)^{\frac{p-2+\gamma}{2}}  |D^2 {\uj}|^2 \, dx \, dx\cr\cr
 &\le &2\varepsilon \int_{\Omega} \eta^2 (1 + |D{\uj}|^2)^{\frac{p-2+\gamma}{2}}  |D^2 {\uj}|^2 \, dx \cr\cr
&&+ C_\varepsilon (1+\gamma^2) \, \int_{\Omega} \eta^2  h^2(x)(1 + |D{\uj}|^2)^{\frac{2q -p+\gamma}{2} }  \, dx\cr\cr
&&+C_\varepsilon \int_{\Omega} |D \eta|^2 \,  (1 + |D{\uj}|^2)^{\frac{q+\gamma}{2}}  \, dx.
\end{eqnarray*}
Choosing $\varepsilon=\frac{\bar \nu}{4}$, we can reabsorb the first integral in the right hand side by the left hand side thus getting
\begin{eqnarray*}
&& \int_{\Omega} \eta^2 (1 + |D{\uj}|^2)^{\frac{p-2+\gamma}{2}}  |D^2 {\uj}|^2 \, dx \, dx\cr\cr
 &\le & C (1+\gamma^2)  \, \int_{\Omega} \eta^2  h^2(x)(1 + |D{\uj}|^2)^{\frac{2q -p+\gamma}{2} }  \, dx\cr\cr
&&+C \int_{\Omega} |D \eta|^2 \,  (1 + |D{\uj}|^2)^{\frac{q+\gamma}{2}}  \, dx,
\end{eqnarray*}
with a constant $C=C(\nu,\tilde L, n, N,p,q)$ {independent of $\gamma$, $j$ and $m$}.
This  concludes the proof.
\end{proof}

\section{The higher integrability}

\label{quattro}

Here, we establish an higher integrability result for the approximating minimizers with constants independent of the parameter of the approximation. This is the main point in achieving the proof of our main result. 
\begin{lemma}\label{lemhigh}
Let $\uj\in \LL^{2m}(\Omega'; \mathbb{R}^N)\cap \WW^{1,p}(\Omega'; \mathbb{R}^N)$ be  a local minimizer of the functional $\mathfrak{F}^{j}$ in \eqref{fun}. Setting 
\[
\mathfrak{m}_r := \frac{2 r m}{2m + r}.
\]
Then $${|D\uj|}\in \LL^{\mathfrak{m}_r(p-q+1)}_{\loc} (\Omega)
\qquad 
$$ with the following estimate
\begin{eqnarray}\label{lemhighest}
 &&\int_{B_\rho} |D\uj|^{\mathfrak{m}_r(p-q+1)}\, dx\le  \frac{C\Theta_R^{\frac{\mathfrak{m}_r}{2}}}{(R-\rho)^{r}}\left( \int_{B_R}
\! |\uj|^{2m} \, dx \right)^{\frac{\mathfrak{m}_r}{2m}}+C|B_R|,
 \end{eqnarray}
 for every balls $B_\rho\subset B_R\Subset \Omega'$ with a constant $C$ depending at most on $K_0,\overline{\nu},n,N,p,q,r$ but independent of $j$ and of $m$ and where we set
 \[
\Theta_R = \|1  + h\|^2_{\LL^r(B_R)}.
\]
\end{lemma}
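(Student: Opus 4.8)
The plan is to combine the second order Caccioppoli inequality of Lemma \ref{Caccio} with the Gagliardo--Nirenberg type inequality of Lemma \ref{gagnir}, tuning the exponent $\gamma$ so that the growth terms on the right hand side are controlled by the quantity $\int \eta^2 |u_j|^{2m}$ together with a lower-order term. First I would apply Lemma \ref{gagnir} with $v = u_j$ (legitimate since $u_j \in W^{2,2}_{\mathrm{loc}}$ by Theorem \ref{premhighdiff} and is locally Lipschitz by Theorem \ref{lipschitz}, so the maps involved are as regular as needed after a standard mollification/truncation argument), obtaining
\[
\int_{\Omega'} \eta^2 |Du_j|^{\frac{m}{m+1}(p+2)}\,dx
\le (p+2)^2 \Big(\int_{\Omega'}\eta^2|u_j|^{2m}\,dx\Big)^{\frac{1}{m+1}}
\Big[\Big(\int \eta^2|D\eta|^2|Du_j|^p\Big)^{\frac{m}{m+1}} + nN\Big(\int \eta^2|Du_j|^{p-2}|D^2u_j|^2\Big)^{\frac{m}{m+1}}\Big].
\]
The crucial input is that the last integral, $\int \eta^2|Du_j|^{p-2}|D^2u_j|^2$, is exactly the left hand side of \eqref{Caccioppoli} with $\gamma = 0$ (up to the harmless $(1+|Du_j|^2)$ versus $|Du_j|^2$ discrepancy), so Lemma \ref{Caccio} with $\gamma=0$ bounds it by $C\int \eta^2 h^2 (1+|Du_j|^2)^{\frac{2q-p}{2}} + C\int|D\eta|^2(1+|Du_j|^2)^{\frac{q}{2}}$, with a constant independent of $j$ and $m$.

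Next I would insert this bound and estimate the resulting gradient integrals. On the right hand side we now have the exponents $\frac{2q-p}{2}$ (paired with $h^2$) and $\frac{q}{2}$; these must be reabsorbed by the left hand side exponent $\frac{m}{m+1}(p+2)$ via Young's inequality and the Gagliardo--Nirenberg output. The key algebraic point is that the gap condition \eqref{gap}, namely $q < p+1-\max\{n/r,(p+2)/r\}$, guarantees $2(q - p + 1) < p + 2$ and more precisely that after combining the $L^r$ integrability of $h$ (via H\"older with conjugate exponents involving $r$) with the $L^{\frac{m}{m+1}(p+2)}$-type integrability of $Du_j$, all gradient exponents appearing are strictly below $\frac{m}{m+1}(p+2)$ — this is where the choice of $\mathfrak m_r = \frac{2rm}{2m+r}$ enters, since $\mathfrak m_r(p-q+1)$ is designed to be the interpolation exponent that balances the H\"older split of $\int \eta^2 h^2 (1+|Du_j|^2)^{\frac{2q-p+\gamma}{2}}$ against the powers coming from Gagliardo--Nirenberg. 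After Young's inequality with a small parameter, the highest-order gradient term is reabsorbed into the left, leaving
\[
\int_{B_\rho}|Du_j|^{\mathfrak m_r(p-q+1)}\,dx \le \frac{C\,\Theta_R^{\mathfrak m_r/2}}{(R-\rho)^r}\Big(\int_{B_R}|u_j|^{2m}\,dx\Big)^{\mathfrak m_r/(2m)} + C|B_R|,
\]
after a final application of the iteration Lemma \ref{lem:Giaq} to absorb the cutoff derivatives and homogenize the radii into the factor $(R-\rho)^{-r}$.

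The main obstacle will be the bookkeeping of exponents: one must verify that the single choice $\gamma = 0$ in the Caccioppoli inequality, once fed through Gagliardo--Nirenberg and then split by H\"older against $\|h\|_{L^r}$ (whose conjugate partner forces the gradient to sit in a power determined by $r$), produces an exponent on $|Du_j|$ that is \emph{strictly less} than $\frac{m}{m+1}(p+2)$ — uniformly in $m$, so that the reabsorption constant does not blow up as $m\to\infty$. This is precisely the role of \eqref{gap}: the strict inequality $q-p+1 < 1 - \max\{n/r,(p+2)/r\}$ gives the needed room, and the condition \eqref{erre} ($r > \max\{n, p+2\}$) ensures the H\"older exponents are admissible. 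A secondary technical point is justifying the use of Lemma \ref{gagnir}, which is stated for $\CC^2$ maps, for our $W^{2,2}_{\mathrm{loc}}\cap W^{1,\infty}_{\mathrm{loc}}$ minimizer; this is handled by a routine approximation, using the local Lipschitz bound to pass to the limit in the right hand side. I would also take care that every constant is tracked to be independent of both $j$ and $m$, which is automatic once Lemma \ref{Caccio} (already $j$- and $m$-independent) and the purely analytic Lemmas \ref{gagnir} and \ref{lem:Giaq} are the only tools used, with no appeal to the $j$-dependent estimates of Theorem \ref{lipschitz} beyond qualitative regularity.
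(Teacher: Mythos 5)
There is a genuine gap, and it sits exactly where you committed to ``the single choice $\gamma=0$''. With $\gamma=0$ the Gagliardo--Nirenberg inequality can only produce the exponent $\frac{m}{m+1}(p+2)$ on the left-hand side, so the best integrability you can extract is $|Du_j|\in \LL^{\frac{m}{m+1}(p+2)}_{\loc}$ (essentially re-deriving Theorem \ref{premhighdiff}), not the claimed $\LL^{\mathfrak m_r(p-q+1)}_{\loc}$. These exponents are not the same: in the paper, $\mathfrak m_r(p-q+1)=\frac{m}{m+1}(p+2+2\gamma)$ for a \emph{positive} $\gamma$ (available for $m$ large precisely because \eqref{erre}--\eqref{gap} give $r(p-q+1)>p+2$), so $\mathfrak m_r(p-q+1)>\frac{m}{m+1}(p+2)$ in the relevant regime — e.g.\ for $p=q=2$ one has $\mathfrak m_r\to r>4=p+2$ as $m\to\infty$. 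The missing idea is the self-improving use of the weight: apply Lemma \ref{gagnir} with $p$ replaced by $p+2\gamma$ \emph{and} Lemma \ref{Caccio} at the same level $\gamma$, and then choose $\gamma>0$ so that the H\"older-split exponent $\frac{r(2q-p+2\gamma)}{r-2}$ coincides with the Gagliardo--Nirenberg exponent $\frac{m}{m+1}(p+2+2\gamma)=\mathfrak m_r(p-q+1)$. Only this balancing makes the right-hand-side gradient term have the \emph{same} power as the left, after which Young's inequality with exponents $\bigl(\tfrac{r(m+1)}{m(r-2)},\tfrac{r(m+1)}{r+2m}\bigr)$ and the iteration Lemma \ref{lem:Giaq} give the specific powers $\Theta_R^{\mathfrak m_r/2}$, $(R-\rho)^{-r}$ and $\bigl(\int|u_j|^{2m}\bigr)^{r/(r+2m)}$ in \eqref{lemhighest}. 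Your proposal cannot reach these, since reabsorbing an exponent \emph{strictly below} $\frac{m}{m+1}(p+2)$ (which is what the gap condition buys at $\gamma=0$) leaves you with the smaller integrability exponent and different powers on the right.

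A secondary remark: your ``key algebraic point'' $2(q-p+1)<p+2$ is true (it follows from $q-p<1$ and $p\ge 2$) but is not the relevant inequality; what the $\gamma=0$ reabsorption actually needs is $(2q-p)\frac{r}{r-2}<p+2$, i.e.\ $q<p+1-\frac{p+2}{r}$, and what the lemma's statement needs is the stronger balancing with $\gamma>0$ described above. The rest of your scaffolding (use of Theorems \ref{premhighdiff} and \ref{lipschitz} for admissibility, $j$- and $m$-independence of constants via Lemma \ref{Caccio}, the approximation caveat for Lemma \ref{gagnir}) matches the paper, but without the $\gamma$-shifted application the stated estimate does not follow.
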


\begin{proof}
By Theorem  \ref{lipschitz}, we have that ${\uj}\in  \WW^{1,\infty}_{\loc}(\Omega')$ and the Caccioppoli inequality at Lemma \ref{Caccio} yields that $$  (1 + |D{\uj}|^2)^{\frac{p-2}{2}+\gamma}  |D^2 {\uj}|^2\in \LL^1_{\loc}(\Omega'),$$
for every $\gamma>0$. Hence,   we are legitimate to apply Lemma \ref{gagnir} with $p$ replaced by $p+2\gamma$,  thus getting
\begin{eqnarray*}\label{intineq0}
&&\int_{\Omega^{\prime}} \! \eta^{2}|D\uj|^{\frac{m}{m+1}(p+2+2\gamma)} \, dx \cr\cr
&&\leq (p+2+2\gamma)^{2}\left( \int_{\Omega^{\prime}}
\! \eta^2|\uj|^{2m} \, dx \right)^{\frac{1}{m+1}} \left( \int_{\Omega^{\prime}}\! \eta^2
|D\eta |^{2}|D\uj|^{p+2\gamma} \, dx \right)^{\frac{m}{m+1}}\cr\cr
&&+ nN(p+2+2\gamma)^{2}\left( \int_{\Omega^{\prime}}
\! \eta^2|\uj|^{2m} \, dx \right)^{\frac{1}{m+1}} \!\left(
\int_{\Omega^{\prime}}\! \eta^2|D\uj|^{p-2+2\gamma}|D^{2}\uj|^{2} \, dx 
\right)^{\frac{m}{m+1}} ,
\end{eqnarray*}
for every non negative $\eta\in C^1_0(\Omega')$ such that $0\le \eta\le 1$. 
Using \eqref{Caccioppoli} to estimate the last  integral in the right hand side of the previous inequality, we obtain
\begin{eqnarray*}\label{intineq1}
&&\int_{\Omega^{\prime}} \! \eta^{2}|D\uj|^{\frac{m}{m+1}(p+2+2\gamma)} \, dx \cr\cr
&\le& (p+2+2\gamma)^{2}\left( \int_{\Omega^{\prime}}
\! \eta^2|\uj|^{2m} \, dx \right)^{\frac{1}{m+1}} \left( \int_{\Omega^{\prime}}\! \eta^2
|D\eta |^{2}|D\uj|^{p+2\gamma} \, dx \right)^{\frac{m}{m+1}}\cr\cr
&&C(p+2+2\gamma)^{4}\left( \int_{\Omega^{\prime}}
\! \eta^2|\uj|^{2m} \, dx \right)^{\frac{1}{m+1}}\left(\int_{\Omega'} \eta^2 h^2(x) (1 + |D\uj|)^{2q -p +2 \gamma}\,dx\right)^{\frac{m}{m+1}}\cr\cr  &&
+ C(p+2+2\gamma)^{2}\left( \int_{\Omega^{\prime}}
\! \eta^2|\uj|^{2m} \, dx \right)^{\frac{1}{m+1}} \left(
\int_{\Omega'}  |D\eta|^2 (1 + |D\uj|)^{{q + 2\gamma}}\,dx
\right)^{\frac{m}{m+1}},\cr\cr 
&\le& C(p+2+2\gamma)^{4}\left( \int_{\Omega^{\prime}}
\! \eta^2|\uj|^{2m} \, dx \right)^{\frac{1}{m+1}}\cr\cr
&&\qquad \cdot\left(\int_{\Omega'} \big(\eta^2 +|D\eta|^2\big)\big(1+h^2(x)\big) (1 + |D\uj|)^{2q -p +2 \gamma}\,dx\right)^{\frac{m}{m+1}}
\end{eqnarray*}
where we used that $1+\gamma\le p+2+2\gamma$ and that
$p+2\gamma\le q+2\gamma\le 2q-p+2\gamma$. 
\\
By virtue of the assumption on $h(x)$, we  use H\"older's inequality in the right hand side of the previous estimate thus getting
\begin{eqnarray}\label{intineq2}
&&\int_{\Omega^{\prime}} \! \eta^{2}|D\uj|^{\frac{m}{m+1}(p+2+2\gamma)} \, dx \cr\cr
&\le& C(p+2+2\gamma)^{4}\left( \int_{\Omega^{\prime}}
\! \eta^2|\uj|^{2m} \, dx \right)^{\frac{1}{m+1}}\left(\int_{\Omega'} \big(\eta^2 +|D\eta|^2\big)\big(1+h(x)\big)^r\,dx\right)^{\frac{2m}{r(m+1)}}\cr\cr
&&\qquad \cdot \left(\int_{\Omega'} \big(\eta^2 +|D\eta|^2\big)(1 + |D\uj|)^{\frac{r(2q -p +2 \gamma)}{r-2}}\,dx\right)^{\frac{m(r-2)}{r(m+1)}}
\end{eqnarray}
Fix concentric balls $B_\rho\subset B_s\subset B_t\subset B_R\Subset\Omega'$ and let $\eta\in C^1_0(B_t)$ be a standard cut off function between $B_s$ and $B_t$ i.e. $0\le \eta\le 1$, $\eta=1$ on $B_s$ and $|D\eta|\le \frac{c(n)}{t-s}$. Without loss of generality, we shall  suppose that $|B_R|\le 1$.
\\
With such a choice, estimate \eqref{intineq2} yields
\begin{eqnarray}\label{intineq2bis}
&&\int_{B_s} \! |D\uj|^{\frac{m}{m+1}(p+2+2\gamma)} \, dx \cr\cr
&\le& \frac{C\Theta_R^{\frac{m}{m+1}}}{(t-s)^{\frac{2m}{m+1}}}(p+2+2\gamma)^{4}\left( \int_{B_t}
\! |\uj|^{2m} \, dx \right)^{\frac{1}{m+1}}\cr\cr
&&\qquad \cdot \left(\int_{B_t} (1 + |D\uj|)^{\frac{r(2q -p +2 \gamma)}{r-2}}\,dx\right)^{\frac{m(r-2)}{r(m+1)}}
\end{eqnarray}
Note that we used the following
$$\eta^2 +|D\eta|^2\le 1+\frac{c}{(t-s)^2}\le \frac{c'}{(t-s)^2}, $$
since $t-s\le 1$.
Choose now $\gamma$ such that
 $$\frac{r(2q -p +2 \gamma)}{r-2}=\frac{m}{m+1}(p+2+2\gamma)\,\,\Longleftrightarrow\,\, 2\gamma=\frac{2mr(p-q+1)-2m(p+2)-r(2q-p)}{2m+r},$$
 which yields
 $$ \frac{m}{m+1}(p+2+2\gamma)=\frac{2rm}{2m+r}(p-q+1)=\mathfrak{m}_r(p-q+1)$$
Note that by virtue of \eqref{gap}, we have $\gamma>0$. Indeed
$$\gamma>0\,\,\Longleftrightarrow\,\,2mr(p-q+1)-2m(p+2)-r(2q-p)>0$$ 
$$\Longleftrightarrow\,\, 2m[r(p-q+1)-(p+2)]>(2q-p)r$$
The last  inequality can be satisfied for a suitable $m\in \mathbb{N}$ if
$$r(p-q+1)-(p+2)>0\,\,\Longleftrightarrow\,\,q<p+1-\frac{p+2}{r},$$
that holds true by virtue of assumption \eqref{gap}.
\\
With this choice of $\gamma$, observing that
\begin{equation}\label{boundm}
\frac{m+1}{r+2m}\le \frac12, \qquad\qquad \forall m\in\mathbb{N}    
\end{equation}
we have that
$$ p+2+2\gamma=\frac{2r(m+1)}{2m+r}(p-q+1)\le r(p-q+1),$$
and so estimate \eqref{intineq2bis} becomes
\begin{eqnarray}\label{intineq2ter}
&&\int_{B_s} \! |D\uj|^{\mathfrak{m}_r(p-q+1)} \, dx \cr\cr
&\le&\!\! \frac{C\Theta_R^{\frac{m}{m+1}}}{(t-s)^{\frac{2m}{m+1}}}\left( \int_{B_t}
\! |\uj|^{2m} \, dx \right)^{\frac{1}{m+1}}\!\!\left(\int_{B_t} (1 + |D\uj|)^{\mathfrak{m}_r(p-q+1)}\,dx\right)^{\frac{m(r-2)}{r(m+1)}}
\end{eqnarray}
with $C=C(K_0,\overline{\nu}, n,N,p,q,r)$  independent of $j$ and $m$. Using Young's inequality with exponents $$\left(\frac{r(m+1)}{m(r-2)};\frac{r(m+1)}{r+2m}\right)$$ in the right hand side
of the previous inequality, we obtain
\begin{eqnarray}\label{intineq2ter}
&&\int_{B_s} \! |D\uj|^{\mathfrak{m}_r(p-q+1)} \, dx \le \frac12 \int_{B_t} (1 + |D\uj|)^{\mathfrak{m}_r(p-q+1)}\,dx\cr\cr
&+& \frac{2^{\frac{m(r-2)}{r+2m}}C^{\frac{r(m+1)}{r+2m}}\Theta_R^{\frac{\mathfrak{m}_r}{2}}}{(t-s)^{\frac{2rm}{r+2m}}}\left( \int_{B_t}
\! |\uj|^{2m} \, dx \right)^{\frac{r}{r+2m}}
\cr\cr
&\le&\frac12 \int_{B_t} |D\uj|^{\mathfrak{m}_r(p-q+1)}\,dx+ |B_R|\cr\cr
&+&\!\! \frac{C\Theta_R^{\frac{\mathfrak{m}_r}{2}}}{(t-s)^{r}}\left( \int_{B_R}
\! |\uj|^{2m} \, dx \right)^{\frac{r}{r+2m}},
\end{eqnarray}
where, in order to control the constants, we used the bound at \eqref{boundm}, that $C\ge 1$ and that $R-\rho\le 1$.
Since previous estimate holds true for every $\rho<s<t<R$, we can apply Lemma \ref{lem:Giaq} thus obtaining
\begin{eqnarray*}
&&\int_{B_\rho} \! |D\uj|^{\mathfrak{m}_r(p-q+1)} \, dx \le \, \frac{C\Theta_R^{\frac{\mathfrak{m}_r}{2}}}{(R-\rho)^{r}}\left( \int_{B_R}
\! |\uj|^{2m} \, dx \right)^{\frac{r}{r+2m}}+c|B_R|,
\end{eqnarray*}
i.e. the conclusion.
\end{proof}

\begin{corollary}\label{lemhighsti}
Let ${\uj}\in \LL^{2m}_{\loc}(\Omega; \mathbb{R}^N)\cap \WW^{1,p}_{\loc}(\Omega; \mathbb{R}^N)$ be  a local minimizer of the functional $\mathfrak{F}^{j}$ in \eqref{fun}. Then $$|D{\uj}|\in {\LL}^{\mathfrak{m}_r(p-q+1)}_{\loc}(\Omega)$$ with the following estimate
\begin{eqnarray*}
 &&\int_{B_\rho} |D\uj|^{\mathfrak{m}_r(p-q+1)}\, dx\le  \frac{C\Theta_R^{\frac{\mathfrak{m}_r}{2}}}{(R-\rho)^{r}}\left( 1+a^{2m} \right)^{\frac{r}{r+2m}},
 \end{eqnarray*}
 for every balls $B_\rho\subset B_R\Subset \Omega'$, with $R\le 1$ and  with a constant $C$ depending at most on $K_0,p,q,r$ but independent of $j$ and of $m$.
\end{corollary}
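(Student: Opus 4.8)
The plan is to obtain the Corollary as an essentially immediate consequence of Lemma~\ref{lemhigh}, the only extra ingredient being the uniform $\LL^{2m}$ bound on the approximating minimizers established while proving Lemma~\ref{lem62}. Recall that \eqref{bounda} gives
\[
\sup_{j\in\mathbb{N}}\int_{\Omega^{\prime}}|\uj|^{2m}\,dx\le 2^{m}\bigl(1+|\Omega'|a^{2m}\bigr),
\]
so that, since $B_R\subset\Omega'$, we also have $\int_{B_R}|\uj|^{2m}\,dx\le 2^{m}(1+|\Omega'|a^{2m})$. Substituting this into the right-hand side of \eqref{lemhighest}, and noting that $\frac{\mathfrak{m}_r}{2m}=\frac{r}{r+2m}$, yields
\[
\int_{B_\rho}|D\uj|^{\mathfrak{m}_r(p-q+1)}\,dx\le \frac{C\,\Theta_R^{\frac{\mathfrak{m}_r}{2}}}{(R-\rho)^{r}}\bigl(2^{m}(1+|\Omega'|a^{2m})\bigr)^{\frac{r}{r+2m}}+C|B_R|,
\]
which is already the asserted inequality up to harmless constants.

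What remains is elementary bookkeeping. First, $\bigl(2^{m}\bigr)^{\frac{r}{r+2m}}=2^{\frac{mr}{r+2m}}\le 2^{r/2}$, because $\frac{mr}{r+2m}\le\frac r2$ for every $m$ (equivalently $0\le r^{2}$); this factor is absorbed into a constant depending only on $r$, and in particular does not reintroduce any $m$-dependence. Second, $1+|\Omega'|a^{2m}\le(1+|\Omega'|)(1+a^{2m})$, so $(1+|\Omega'|a^{2m})^{\frac{r}{r+2m}}\le(1+|\Omega'|)(1+a^{2m})^{\frac{r}{r+2m}}$, the factor $1+|\Omega'|$ being harmless once $\Omega'$ is fixed. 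Finally, since the Corollary assumes $R\le 1$ (hence $R-\rho\le 1$ and $|B_R|\le|B_1|$) and since $(1+a^{2m})^{\frac{r}{r+2m}}\ge 1$, the additive term $C|B_R|$ is dominated by $\frac{C}{(R-\rho)^{r}}(1+a^{2m})^{\frac{r}{r+2m}}$. Collecting these remarks gives exactly
\[
\int_{B_\rho}|D\uj|^{\mathfrak{m}_r(p-q+1)}\,dx\le \frac{C\,\Theta_R^{\frac{\mathfrak{m}_r}{2}}}{(R-\rho)^{r}}\bigl(1+a^{2m}\bigr)^{\frac{r}{r+2m}},
\]
and tracing the constants back through Lemma~\ref{lemhigh} and \eqref{bounda} shows that $C$ depends only on the quantities listed in the statement and is independent of $j$ and $m$.

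I do not expect any genuine obstacle at this stage: all the analytic effort — the Gagliardo--Nirenberg interpolation of Lemma~\ref{gagnir}, the second-order Caccioppoli inequality of Lemma~\ref{Caccio} with constants independent of the approximation parameters, the admissible choice of $\gamma$ forced by \eqref{gap}, and the Young-inequality reabsorption followed by the iteration Lemma~\ref{lem:Giaq} — has already been carried out inside Lemma~\ref{lemhigh}. The only point requiring a modicum of care is to check that the exponent manipulations above do not smuggle an $m$-dependence into $C$, and this is settled once and for all by the bound $\frac{mr}{r+2m}\le\frac r2$. In other words, the Corollary is precisely the payoff of having designed the approximating problems in Section~\ref{tre.due} so that $\|\uj\|_{\LL^{2m}(\Omega')}$ is controlled by $a\ge\|u\|_{\LL^{\infty}(\Omega';\mathbb{R}^{N})}$: it records the higher-integrability estimate of Lemma~\ref{lemhigh} in the form in which it will be iterated (letting $m\to\infty$) in Section~\ref{cinque}.
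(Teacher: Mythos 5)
Your argument coincides with the paper's own proof, which consists precisely of inserting \eqref{bounda} into the right-hand side of \eqref{lemhighest}; your exponent bookkeeping ($\tfrac{\mathfrak{m}_r}{2m}=\tfrac{r}{r+2m}$, $2^{\frac{mr}{r+2m}}\le 2^{r/2}$) is correct and keeps the constant independent of $j$ and $m$. One micro-remark: to absorb the additive term $C|B_R|$ into the final bound, which carries the factor $\Theta_R^{\mathfrak{m}_r/2}$, it is cleanest to note $\Theta_R\ge \|1\|^2_{\LL^r(B_R)}=|B_R|^{2/r}$, whence $|B_R|\le c(n)\,\Theta_R^{\mathfrak{m}_r/2}$ uniformly in $m$ for $R\le 1$ --- the same harmless step the paper leaves implicit.
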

\begin{proof}
It suffices to use \eqref{bounda} in the right hand side of 
estimate
 \eqref{lemhighest}.
\end{proof}

\section{Proof of Theorems \ref{main} and \ref{highdiff}}

\label{cinque}

We are now in position to establish the proof of our main result, that will be divided in two steps. In the first one we establish an uniform a priori estimate for the $\LL^\infty$ norm of the gradient of the minimizers of the approximating functionals while in the second we show that these estimates are preserved in passing to the limit.

\medskip
\begin{proof}[Proof of Theorem \ref{main}]

Let us fix a ball $B_{R_0}\Subset \Omega$ and radii $\frac{R_{0}}{2}<\bar\rho<\rho<t_1<t_2<R<\bar R<R_{0}\le 1$ that will be needed in the three iteration procedures, constituting the essential steps in our proof.

\noindent {\bf Step 1. The uniform a priori estimate.}
 Let us choose $\eta\in C^1_0(B_{t_2})$ such that $\eta=1$ on $B_{t_1}$ and $|D\eta|\le \frac{C}{t_2-t_1}$, so that \eqref{Caccioppoli} implies

\begin{eqnarray*}
&& \int_{B_{t_2}} \eta^2 (1 + |D{\uj}|^2)^{\frac{p-2}{2}+\gamma}  |D^2 {\uj}|^2 \, dx\cr\cr
&\le & C \frac{(1+\gamma^2)}{(t_2-t_1)^2}  \, \int_{B_{t_2}}  (1+h^2(x))(1 + |D{\uj}|^2)^{\frac{2q -p}{2}+\gamma }  \, dx.
\end{eqnarray*}
Using the assumptions on $h(x)$ and H\"older's inequality, we arrive at
\begin{eqnarray}\label{pinca}
&&\int_{B_{t_2}} \eta^2 (1 + |D{\uj}|^2)^{\frac{p-2}{2} + \gamma}  |D^2 {\uj}|^2 \, dx \cr\cr
&\le&   (1 + \gamma^2)\frac{C\Theta}{(t_2 - t_1)^{2}} \left [\int_{B_{t_2}}(1 + |D{\uj}|^2)^{\frac{(2\gamma + 2q - p)\mathfrak{r}}{2}}  \, dx \right
]^{\frac{1}{\mathfrak{r}}}
\end{eqnarray}
for any  $0 < t_1 < t_2$, where the constant $C$  is independent of $\gamma,$ of $m$ and of $\varepsilon$, where $\Theta=\Theta_{R_0}$, and where we set
$$\mathfrak{r} =\frac{r}{r-2}.$$
The Sobolev  inequality yields
\begin{eqnarray*}
&&\left ( \int_{B_{t_2}}  \eta^{2^*}(1 + |D{\uj}|^2)^{(\frac{p}{4} + \frac{\gamma}{2}) 2^*}\, dx\right )^{\frac{2}{2^*}} \le \, C \, \int_{B_{t_2}} |D (\eta(1 + |D{\uj}|^2)^{\frac{p}{4} + \frac{\gamma}{2}})|^2 \, dx 	\cr\cr
&\le&C(1+\gamma^2)\int_{B_{t_2}} \eta^2 (1 + |D{\uj}|^2)^{\frac{p-2}{2} + \gamma}  |D^2 {\uj}|^2\,dx+C \int_{B_{t_2}} |D \eta|^2 \,  (1 + |D{\uj}|^2)^{\frac{p}{2}+\gamma}  \, dx,
\end{eqnarray*}
where $2^*$ is the exponent defined at \eqref{sobexp}.
	Using estimate \eqref{pinca} to control the first integral in the right hand side of the previous inequality, we obtain
\begin{eqnarray}\label{stimapreite}
&&\left ( \int_{B_{t_2}}  \eta^{2^*}(1 + |D{\uj}|^2)^{(\frac{p}{4} + \frac{\gamma}{2}) 2^*}\, dx\right )^{\frac{2}{2^*}} 	\cr\cr
&\le& C\frac{\Theta(1+\gamma^4)}{(t_2 - t_1)^{2}}\left [\int_{B_{t_2}}(1 + |D{\uj}|^2)^{\frac{(2\gamma + 2q - p)\mathfrak{r}}{2}}  \, dx \right ]^{\frac{1}{\mathfrak{r}}}\cr\cr
&&+\frac{C}{(t_2 - t_1)^{2}} \int_{B_{t_2}}  \,  (1 + |D{\uj}|^2)^{\frac{p}{2}+\gamma}  \, dx\cr\cr
&\le& C\frac{\Theta(1+\gamma^4)}{(t_2 - t_1)^{2}}\left [\int_{B_{t_2}}(1 + |D{\uj}|^2)^{\frac{(2\gamma + 2q - p)\mathfrak{r}}{2}}  \, dx \right ]^{\frac{1}{\mathfrak{r}}},
\end{eqnarray}	
where we used that $p\le 2q-p$ and that $L^{\mathfrak{r}}\hookrightarrow L^1$.
Now, setting $$V(D{\uj})=(1+|D{\uj}|^2)^{\frac{1}{2}},$$ we can write \eqref{stimapreite} as follows
\begin{eqnarray*}
&&\left ( \int_{B_{R}}  \eta^{2^*}V(D{\uj})^{(p+2\gamma)\frac{2^*}{2}}\, dx\right )^{\frac{2}{2^*}} 	\cr\cr
&\le& C\frac{\Theta (1+\gamma^4)}{(R - \rho)^{2}}\left [\int_{B_R}V(D{\uj})^{[ 2(q - p)\mathfrak{r}+(p+2\gamma)\mathfrak{r}]}  \, dx \right ]^{\frac{1}{\mathfrak{r}}} \cr\cr
&\le& C\frac{\Theta (1+\gamma^4)}{(R - \rho)^{2}}||V(D{\uj})||^{2(q-p)}_{L^\infty(B_R)}\left [\int_{B_R}V(D{\uj})^{(p+2\gamma)\mathfrak{r}}  \, dx \right ]^{\frac{1}{\mathfrak{r}}}
\end{eqnarray*}
and so
\begin{eqnarray}\label{stimapreiteter}
&&\left ( \int_{B_{\rho}}  V(D{\uj})^{[\mathfrak{r}(p+2\gamma)]\frac{2^*}{2\mathfrak{r}}}\, dx\right )^{\frac{2\mathfrak{r}}{2^*}} 	\cr\cr
&\le& C {\left [\frac{\Theta (1+\gamma^4)}{(R - \rho)^{2}} \right ]^\mathfrak{r}} ||V(D{\uj})||^{2\mathfrak{r}(q-p)}_{L^\infty(B_R)}\int_{B_R}V(D{\uj})^{(p+2\gamma)\mathfrak{r}}  \, dx ,
\end{eqnarray}	
where we also used that $\eta=1$ on $B_\rho$.
\\
{With $\frac{R_0}{2}\le \bar \rho<\bar R\le R_0$ fixed at the beginning of the section, we} define the decreasing sequence of radii by setting
$$\rho_i=\bar \rho+\frac{\bar R-\bar \rho}{2^i}.$$
Let us also define the following increasing sequence of exponents
$$p_0=p\mathfrak{r}\qquad\qquad {p_{i}}={p_{i-1}}\frac{2^*}{2\mathfrak{r}}=p_0\left(\frac{2^*}{2\mathfrak{r}}\right)^{i}$$
Noticing that, since $u_j \in \WW^{1,\infty}_{\mathrm{loc}}(\Omega)$, estimate \eqref{stimapreiteter} holds true for $\gamma=0$ and for every $\bar \rho<\rho<R<\bar R$, we may iterate it  on the concentric balls $B_{\rho_i}$  with exponents $p_i$,
thus obtaining
\begin{eqnarray}\label{stimapreitepassoi}
&&\left ( \int_{B_{\rho_{i+1}}}  V(D{\uj})^{p_{i+1}}\, dx\right )^{\frac{1}{p_{i+1}}} 	\cr\cr
&\le& \displaystyle{\prod_{h=0}^{i}}\left(C {\frac{\Theta^\mathfrak{r} p_h^{4\mathfrak{r}}}{(\rho_h - \rho_{h+1})^{2\mathfrak{r}}}}||V(D{\uj})||^{2\mathfrak{r}(q-p)}_{L^\infty(B_R)}\right)^{\frac{1}{p_h}}\left(\int_{B_{\rho_0}}V(D{\uj})^{p_0}  \,
dx\right)^{\frac{1}{p_0}}\cr\cr
&=& \displaystyle{\prod_{h=0}^{i}}\left(C{\frac{4^{h+1}\Theta^\mathfrak{r} p_h^{4\mathfrak{r}}}{(\bar R-\bar \rho )^{2\mathfrak{r}}}}||V(D{\uj})||^{2\mathfrak{r}(q-p)}_{L^\infty(B_R)}\right)^{\frac{1}{p_h}}\left(\int_{B_{\rho_0}}V(D{\uj})^{p_0}  \,
dx\right)^{\frac{1}{p_0}}\cr\cr
&=&\displaystyle{\prod_{h=0}^{i}}\!\!\left(4^{h+1} {p_h^{4\mathfrak{r}}} \right)^{\frac{1}{p_h}}\displaystyle{\prod_{h=0}^{i}}\left({\frac{C\Theta^\mathfrak{r} }{(\bar R-\bar \rho
)^{2\mathfrak{r}}}}||V(D{\uj})||^{2\mathfrak{r}(q-p)}_{L^\infty(B_R)}\right)^{\frac{1}{p_h}}\cr\cr
&&\qquad\qquad\cdot \left(\int_{B_{\rho_0}}V(D{\uj})^{p_0} dx\right)^{\frac{1}{p_0}}
\end{eqnarray}
Since
$$\displaystyle{\prod_{h=0}^{i}}\left(4^{h+1} {p_h^{4\mathfrak{r}}}\right)^{\frac{1}{p_h}}=\exp\left(\sum_{h=0}^i \frac{1}{p_h}\log(4^{h+1} {p_h^{4\mathfrak{r}}})\right)\le \exp\left(\sum_{h=0}^{+\infty} \frac{1}{p_h}\log({4}^{h+1}
{p_h^{4\mathfrak{r}}})\right) \le c(r)$$
and
\begin{eqnarray*}
	&&\displaystyle{\prod_{h=0}^{i}}\left(\frac{C {\Theta^\mathfrak{r}} }{{(\bar R-\bar \rho )^{2\mathfrak{r}}}}||V(D{\uj})||^{2\mathfrak{r}(q-p)}_{L^\infty(B_R)}\right)^{\frac{1}{p_h}}=\left(\frac{C {\Theta^\mathfrak{r}} }{{(\bar R-\bar \rho
)^{2\mathfrak{r}}}}||V(Du_{j})||^{2\mathfrak{r}(q-p)}_{L^\infty(B_R)}\right)^{\sum_{h=0}^{i}\frac{1}{p_h}}
	\cr\cr
	&\le& \left(\frac{C {\Theta^\mathfrak{r}} }{{(\bar R-\bar \rho )^{2\mathfrak{r}}}}||V(D{\uj})||^{2\mathfrak{r}(q-p)}_{L^\infty(B_R)}\right)^{\sum_{h=0}^{+\infty}\frac{1}{p_h}}=\left(\frac{C {\Theta^\mathfrak{r}} }{{(\bar R-\bar \rho
)^{2\mathfrak{r}}}}||V(D{\uj})||^{2\mathfrak{r}(q-p)}_{L^\infty(B_R)}\right)^{\frac{2^*}{p_0(2^* - 2 \mathfrak{r})}},
\end{eqnarray*}
we can let $i\to \infty$ in \eqref{stimapreitepassoi} thus getting
\begin{eqnarray*}
	||V(D{\uj})||_{L^{\infty}(B_{\bar\rho})} \le C \left(\frac{\Theta }{(\bar R-\bar \rho )^{2}}\right)^{\frac{2^*\mathfrak{r}}{p_0(2^*-2\mathfrak{r})}}||V(D{\uj})||_{L^{\infty}(B_{\bar R})}^{\frac{2\cdot 2^*\mathfrak{r}(q-p)}{p_0(2^*-2m)}}\left(\int_{B_{\bar R}}V(D{\uj})^{p_0}
\, dx\right)^{\frac{1}{p_0}},
\end{eqnarray*}
since $\sum_{h=0}^{{\infty}}\frac{1}{p_h}= \frac{2^*}{p_0(2^*-2\mathfrak{r})} $. Therefore, {using the fact that  $p_0=p\mathfrak{r}$, we deduce}
\begin{eqnarray*}
	||V(D{\uj})||_{L^{\infty}(B_{\bar\rho})} \le C \left(\frac{\Theta }{(\bar R-\bar \rho )^{2}}\right)^{{\frac{2^*}{p(2^*-2\mathfrak{r})}}}||V(D{\uj})||_{L^{\infty}(B_{\bar R})}^{\frac{2\cdot 2^*(q-p)}{p(2^*-2\mathfrak{r})}}\left(\int_{B_{\bar R}}V(D{\uj})^{p\mathfrak{r}}
\, dx\right)^{\frac{1}{p\mathfrak{r}}}.
\end{eqnarray*}  
Using the higher integrability at Lemma \ref{lemhigh} we deduce that
\begin{eqnarray}\label{stimaquasifin}
	||V(D{\uj})||_{L^{\infty}(B_{\bar\rho})} &\le& C \left(\frac{\Theta }{(\bar R-\bar \rho )^{2}}\right)^{{\frac{2^*}{p(2^*-2\mathfrak{r})}}}||V(D{\uj})||_{L^{\infty}(B_{\bar R})}^{\frac{2\cdot 2^*(q-p)}{p(2^*-2\mathfrak{r})}-{\frac{2m}{2m+r}\frac{r(p-q+1)}{p\mathfrak{r}}}+1}\cr\cr
	&&\qquad\cdot\left(\int_{B_{\bar R}}V(D{\uj})^{{\frac{2mr}{2m+r}(p-q+1)}}
\, dx\right)^{\frac{1}{p\mathfrak{r}}}.
\end{eqnarray}  
Now,  we note that
{$$\frac{2\cdot 2^*(q-p)}{p(2^*-2\mathfrak{r})}+1-\frac{r(p-q+1)}{p\mathfrak{r}}<1\,\,\Longleftrightarrow\,\, \frac{2\cdot 2^*(q-p)}{2^*-2\mathfrak{r}}<(r-2)(p-q+1)$$
\begin{equation}\label{disgoal}
 \Longleftrightarrow\,\,(q-p)\left[\frac{2\cdot 2^*}{2^*-2\mathfrak{r}}+r-2\right]< r-2\,\,\Longleftrightarrow\,\,q-p< \frac{r-2}{\frac{2\cdot 2^*}{2^*-2\mathfrak{r}}+r-2}   
\end{equation}
where we used that $\mathfrak{r}=\frac{r}{r-2}$}.
Since
$$\frac{2\cdot 2^*}{2^*-2\mathfrak{r}}=\frac{2\cdot\frac{2n}{n-2}}{\frac{2n}{n-2}-\frac{2r}{r-2}}=\frac{\frac{2n}{n-2}}{\frac{n}{n-2}-\frac{r}{r-2}}=\frac{\frac{2n}{n-2}}{\frac{2(r-n)}{(n-2)(r-2)}}=\frac{n(r-2)}{r-n}, $$
last inequality is equivalent to
$$q-p< \frac{r-2}{\frac{n(r-2)}{r-n}+r-2} =\frac{1}{\frac{n}{r-n}+1}=\frac{r-n}{r}=1-\frac{n}{r}$$
that is of course satisfied under our assumption on the gap at \eqref{gap}. 
Note that in case $n=2$,  the bound \eqref{gap} reads as   
$$q<p+1-\max\left\{\frac{2}{r},\frac{p+2}{r}\right\},$$
and one can easily check that inequality \eqref{disgoal} is fulfilled provided that we choose $2^*<2$.
By the inequality
$$\frac{2\cdot 2^*(q-p)}{p(2^*-2\mathfrak{r})}<\frac{r(p-q+1)}{p\mathfrak{r}},$$
we can determine $m$ large enough so that
$$\frac{2\cdot 2^*(q-p)}{p(2^*-2\mathfrak{r})}<\frac{2m}{2m+r}\frac{r(p-q+1)}{p\mathfrak{r}}.$$
Indeed it suffices to choose
\begin{equation}\label{mlarge}
 m>\frac{2^*r\mathfrak{r}(q-p)}{r(2^*-2\mathfrak{r})(p-q+1)-2^*2\mathfrak{r}(q-p)}   
\end{equation}
For $m$ satisfying the previous inequality, to simplify the notation we set
$$\chi_m =\frac{2m}{2m+r}\frac{r(p-q+1)}{p\mathfrak{r}}-\frac{2\cdot 2^*(q-p)}{p(2^*-2\mathfrak{r})} $$
so that estimate \eqref{stimaquasifin} can be expressed as
\begin{eqnarray}\label{stimaquasifin2}
	||V(D{\uj})||_{L^{\infty}(B_{\bar\rho})} &\le& C \left(\frac{\Theta }{(\bar R-\bar \rho )^{2}}\right)^{{\frac{2^*}{p(2^*-2\mathfrak{r})}}}||V(D{\uj})||_{L^{\infty}(B_{\bar R})}^{1-\chi_m}\cr\cr
	&&\qquad\cdot\left(\int_{B_{\bar R}}V(D{\uj})^{{\frac{2mr}{2m+r}(p-q+1)}}
\, dx\right)^{\frac{1}{p\mathfrak{r}}},
\end{eqnarray}
with $1-\chi_m\in (0,1).$
Hence, we can use  Young's inequality  with exponents $$\frac{1}{1-\chi_m}\quad \text{and}\quad \frac{1}{\chi_m}$$ in the right hand side of \eqref{stimaquasifin}, we get
 \begin{eqnarray}\label{stimaquasifin00}
&&	||V(D{\uj})||_{L^{\infty}(B_{\bar\rho})} \le \frac{1}{2} ||V(D{\uj})||_{L^{\infty}(B_{\bar R})}\\
&&+\left(\frac{C\Theta }{(\bar R-\bar \rho )^{2}}\right)^{{\frac{2^*}{\chi_{_m} p(2^*-2\mathfrak{r})}}}\left(\int_{B_{\bar R}}V(D{\uj})^{{\frac{2mr}{2m+r}(p-q+1)}}  \, dx\right)^{\frac{1}{p\mathfrak{r}\chi_{_m}}}. \nonumber
\end{eqnarray}
 Since the previous estimate holds true for every $\frac{R_0}{2}<\bar\rho<\bar R<R_0$, by Lemma \ref{lem:Giaq} we get
\[
||V(D{\uj})||_{L^{\infty}\left(B_{\frac{R_0}{2}}\right)} \le \left(\frac{C\Theta }{R_0^{2}}\right)^{{\frac{2^*}{\chi_{_m} p(2^*-2\mathfrak{r})}}}\left(\int_{B_{ R_0}} V(D{\uj})^{{\frac{2mr}{2m+r}(p-q+1)}}  \, dx\right)^{\frac{1}{p\mathfrak{r}\chi_{_m}}}
\]
and by Corollary \ref{lemhighsti}
\[
||V(D{\uj})||_{L^{\infty}\left(B_{\frac{R_0}{2}}\right)} \le C\left(\frac{\Theta }{R_0^{2}}\right)^{{\frac{2^*}{\chi_{_m} p(2^*-2\mathfrak{r})}}}\Bigg(\frac{\Theta^{\frac{\mathfrak{m}_r}{2}}}{R_0^{r}}\left( 1+a^{2m} \right)^{\frac{r}{r+2m}}\Bigg)^{\frac{1}{p\mathfrak{r}\chi_{_m}}}
\]
with a constant $C$ independent of $m$ and $j$.

\medskip
\noindent {\bf Step 2. The passage to the limit.}
Recalling \eqref{conv}, taking the limit as $j\to \infty$ in the previous estimate , we have
 \begin{eqnarray*}\label{stimaquasifin4}
	&&||V(Du)||_{L^{\infty}\left(B_{\frac{R_0}{2}}\right)}\le \liminf_{j\to \infty}||V(D{\uj})||_{L^{\infty}\left(B_{\frac{R_0}{2}}\right)}\cr\cr 
 &\le& C\left(\frac{\Theta }{R_0^{2}}\right)^{{\frac{2^*}{\chi_{_m} p(2^*-2\mathfrak{r})}}}\Bigg(\frac{\Theta^{\frac{\mathfrak{m}_r}{2}}}{R_0^{r}}\left( 1+a^{2m} \right)^{\frac{r}{r+2m}}\Bigg)^{\frac{1}{p\mathfrak{r}\chi_{_m}}}.
\end{eqnarray*}
Now, we observe
$$\lim_{m\to \infty}\chi_m=\frac{r(p-q+1)}{p\mathfrak{r}}-\frac{2\cdot 2^*(q-p)}{p(2^*-2\mathfrak{r})}=:\chi_1(p,q,r,n)$$
$$\lim_{m\to \infty}\frac{2mr(p-q+1)}{(2m+r)}=r(p-q+1)$$
Since previous estimate holds for every $m$ large enough to satisfy \eqref{mlarge} with constant independent of $m$, we now take the limit as $m\to\infty$ thus getting
 \begin{eqnarray}\label{stimafin4}
	&&||V(Du)||_{L^{\infty}\left(B_{\frac{R_0}{2}}\right)}
 \le C(\Theta ,R_0)^{\tilde\chi}
 \left( 1+a \right)^{\hat{\chi}}
\end{eqnarray}
with constant $\Theta$  depending on $p,q,r,n,||h||_{L^r},R_0$ and positive exponents $\tilde\chi, \hat{\chi}$ depending on $p,q,r,n$. The conclusion follows taking the limit as $a\to ||u||_{L^\infty}.$
\end{proof}

We are now in position,  using the Caccioppoli inequality of Lemma \ref{Caccio}, to give the

\begin{proof}[Proof of Theorem \ref{highdiff}] Using \eqref{stimaquasifin4} in the right hand side in \eqref{Caccioppoli} with $\gamma=0$, we have
\begin{eqnarray*}
 && \int_{B_{\frac{R_0}{2}}}   |D^2 {\uj}|^2 \, dx\le \int_{B_{\frac{R_0}{2}}}  (1 + |D\uj|^2)^{\frac{p-2}{2}} |D^2 {\uj}|^2 \, dx\cr\cr
&\le & C ||V(Du_j)||^{2q-p}_{\LL^\infty(B_{R_0})}  \, \int_{B_{R_0}}   h^2(x) \, dx+\frac{C}{R_0^2} ||V(Du_j)||^{q}_{\LL^\infty(B_{R_0})}\cr\cr 
&\le&   C\left( 1+||u||_{\LL^{\infty}(B_{R_0}; \mathbb{R}^N}) \right)^{\hat{\chi}},
\end{eqnarray*}
where $C\equiv C (n,N,  \nu, \tilde L, ||h||_{L^r(\Omega)}, R_0)$. The conclusion now easily follows taking the limit as $j\to+\infty$ in the previous estimate and recalling that $u_j\to u$ in $\WW^{1,p}_{\mathrm{loc}}(\Omega; \mathbb{R}^N)$.
\end{proof}
We conclude mentioning that the same argument leads to the following second order regularity result
$$\int_{B_{\frac{R_0}{2}}}  (1 + |D\uj|^2)^{\frac{p-2+\gamma}{2}} |D^2 {\uj}|^2 \, dx\le C_\gamma\left( 1+||u||_{\LL^{\infty}(B_{R_0}; \mathbb{R}^N}) \right)^{{\chi_\gamma}}, $$
where now both the constant and the exponent depend on $\gamma$.

\bigskip

\noindent {\bf Data availability}

\vspace{3mm}

\noindent No datasets were generated or analysed during the current study.

\bigskip

\noindent {\bf Acknowledgments.}
M. Eleuteri and A. Passarelli di Napoli have been partially supported by the Gruppo
Nazionale per l’Analisi Matematica, la Probabilità e le loro Applicazioni (GNAMPA) of the Istituto
Nazionale di Alta Matematica (INdAM). Moreover M. Eleuteri has been partially supported by PRIN 2020 ``Mathematics for industry 4.0 (Math4I4)'' (coordinator P. Ciarletta) while
A. Passarelli di Napoli has been partially supported by Università
degli Studi di Napoli Federico II through the Project FRA ( 000022-75-2021-FRA-PASSARELLI) and by the Sustainable Mobility Center (Centro Nazionale per la Mobilità Sostenibile – CNMS)  Spoke 10 Logistica Merci. 
\\
The authors wish to thank Prof. F. Leonetti for useful suggestions and comments.


\begin{thebibliography}{99}



\bibitem{Adimurthy}\textsc{K. Adimurthi , V. Tewary,} \emph{Borderline Lipschitz regularity for bounded minimizers of functionals with $(p,q)$-growth} ArXiv Preprint.

\bibitem{BM} \textsc{L. Beck, G. Mingione,} {\it Lipschitz Bounds and Nonuniform Ellipticity,} Comm. Pure Appl. Math., {\bf 73}, (5), (2020) 944-1034.

\bibitem{BF05} \textsc{M. Bildhauer, M. Fuchs,} {\it $\mathcal{C}^{1, \alpha}$ solutions to non-autonomous anisotropic variational problems,} Calc. Var. \& PDE, {\bf 24} (2005), 309–340.

%
\bibitem{BMS18} \textsc{V. B\"ogelein, P. Marcellini, C. Scheven,} \emph{A variational approach to doubly nonlinear
equations,} Rend. Lincei, Mat. Appl., {\bf 29} (2018), 739-772.
%

\bibitem{BS2022} \textsc{P. Bella, M. Sch\"affner,}
{\it Lipschitz bounds for integral functionals with $(p,q)$-growth conditions}, Adv. Calc.  Var., (2022), to appear.


\bibitem{CKP}\textsc{M Carozza, J Kristensen, A. Passarelli di Napoli,} \emph{ Higher differentiability of minimizers of convex variational integrals,} Ann. Inst. H. Poincar\'e -- Anal. Non Lin\'{e}aire  {\bf 28} (2011), 395--411

  %
 
 



%


%
\bibitem{Choe}  \textsc{H. J. Choe,} \emph{Interior behavior of minimizers for certain functionals with non standard growth conditions}, Nonlinear Analysis TMA, \textbf{19} (1992), 933-945.
%
%
%
%

\bibitem{colmin}\textsc{M. Colombo, G. Mingione,} {\it Regularity for double phase variational problems,} Arch. Rat. Mech. Anal., {\bf 215} (2), (2015) 443-496.

\bibitem{colmin2}\textsc{M. Colombo, G. Mingione,} {\it  Bounded minimisers of double phase variational integrals,} Arch. Rat. Mech. Anal., {\bf 218} (1), (2015) 219-273.

\bibitem{CGGP}\textsc{G. Cupini, F. Giannetti, R. Giova, A.  Passarelli di Napoli,} \emph{\ Regularity results for vectorial minimizers of a class of degenerate convex integrals}, J. Differential Equations, \text{\ 265}{\ (2018) 4375-4416}

\bibitem{CupGuiMas} \textsc{G. Cupini, M. Guidorzi, E.  Mascolo,} \emph{\ Regularity of
minimizers of vectorial integrals with $p-q$ growth}, Nonlinear Anal., \textbf{54}{\ (2003) 591-616}.

\bibitem{CMM14} {\sc G. Cupini, P. Marcellini, E. Mascolo,} {\it Existence and regularity for elliptic equations under $p, q$-growth}, Adv. Diff. Equ., {\bf 19} (2014), 693–724.

\bibitem{CMMPdN} \textsc{G. Cupini, P. Marcellini, E. Mascolo, A. Passarelli di Napoli}, {\it Lipschitz regularity for degenerate elliptic integrals with $p,q$-growth}, Adv. Calc. Var., {\bf 16} (2) (2023), 443–465.

\bibitem{DFM2021} \textsc{C. De Filippis, G. Mingione}, {\it Lipschitz bounds and nonautonomous integrals,} Arch. Ration. Mech. Anal., {\bf 242} (2021) 973-1057.







%
%
%
%
%
%
\bibitem{DeFilLeo} \textsc{F. De Filippis, F. Leonetti,} \emph{No Lavrentiev gap for some double phase integrals
}, Adv. Calc. Var. (2022), https://doi.org/10.1515/acv-2021-0109.



\bibitem{EMM} \textsc{M. Eleuteri, P. Marcellini, E. Mascolo}, {\it Lipschitz estimates for systems with ellipticity conditions at
infinity,} Ann. Mat. Pura Appl., \textbf{195}, (2016) 1575–1603.

\bibitem{EMM20} \textsc{M. Eleuteri, P. Marcellini, E. Mascolo,} {\it Regularity
for scalar integrals without structure conditions}, Adv. Calc. Var., 
\textbf{13} (2020), 279-300.

\bibitem{EMMP} \textsc{M. Eleuteri, P. Marcellini, E. Mascolo, S. Perrotta,} {\it Local Lipschitz continuity for energy integrals with slow growth,} Annali Mat. Pura Appl., \textbf{201} (3), (2022) 1005–1032.

\bibitem{EPdN23} \textsc{M. Eleuteri, A. Passarelli di Napoli}, {\it Lipschitz regularity of minimizers of variational integrals with variable exponents,} Nonlinear Analysis: Real World Applications, (2023), 71, 103815

\bibitem{ELM99} {\sc L. Esposito, F. Leonetti, G. Mingione,} {\it Regularity for minimisers of functionals
with $p-q$ growth,} Nonlinear Diff. Equ. Appl., {\bf 6} (1999), 133–148.

%
%
%
%
%
%
%

%
\bibitem{GenGioTor} \textsc{A. Gentile, R. Giova, A. Torricelli,} \emph{Regularity results for bounded solutions to obstacle problems with non standard growth}, Mediterranean Journal of Mathematics, {\bf 19} (6), (2022) 270.
%
\bibitem{GPdN} {\sc R. Giova, A. Passarelli di Napoli,} \emph{Regularity results for a priori bounded minimizers of non-autonomous functionals with discontinuous coefficients}, Adv. Calc. Var., (to appear). DOI: https://doi.org/10.1515/acv-2016-0059
%
%
\bibitem{Giusti} {\sc E. Giusti,} \emph{Direct methods in the calculus of variations}. World scientific publishing Co. (2003).

\bibitem{AGGIpo} \textsc{A. G. Grimaldi, E. Ipocoana,} \emph{Higher differentiability to a class of obstacle problems with (p,q)- growth}, Forum Math., (2023) to appear.

\bibitem{L94} {\sc G.M. Lieberman,} {\it Gradient estimates for a new class of degenerate elliptic and parabolic equations,} Ann. Scuola Norm. Sup. Pisa Cl. Sci. (IV), {\bf 21} (1994), 497–522.

%
%
%
%
\bibitem{M89} \textsc{P. Marcellini,} \emph{Regularity of minimizers of integrals of the calculus of variations with nonstandard growth conditions}, Arch. Ration. Mech. Anal., {\bf 105} (1989), no. 3, 267--284.
%
\bibitem{M91} \textsc{P. Marcellini,} \emph{Regularity and existence of solutions of elliptic equations with $p, q$-growth conditions}, J. Differential Equations, {\bf 90} (1991), no. 1, 1--30.
%

\bibitem{HO} \textsc{P. H\"ast\"o, J. Ok,} {\it Regularity theory for non-autonomous problems with a priori assumptions,} Calc. Var. PDEs, (2023) to appear. Preprint arXiv:2209.08917

\bibitem{M2021} \textsc{P. Marcellini,} {\it Growth conditions and regularity for weak solutions to nonlinear elliptic pdes,} J. Math. Anal. Appl., J. Math. Anal. Appl., 
{\bf 501}, (1), (2021), 124408. 
 
\bibitem{M2022} \textsc{P. Marcellini,} {\it Local Lipschitz continuity for $p, q$-PDEs
with explicit $u$-dependence,} Nonlinear Analysis {\bf 226}, (2023), 113066.

\bibitem{PdN14-1} \textsc{A. Passarelli di Napoli,} \emph{Higher differentiability of minimizers of variational integrals with Sobolev coefficients}, Adv. Calc. Var., {\bf 7}, (1), (2014), 59-89.

\bibitem{PdN14-2} \textsc{A. Passarelli di Napoli,} \emph{Higher differentiability of solutions of elliptic systems with Sobolev coefficients: the case $p = n = 2$}, Pot. Anal., {\bf 41}, (3), (2014), 715-735.


\bibitem{Z} \textsc{V.V. Zhikov,}{\it Averaging of functionals of the calculus of variations and elasticity theory,} Izv. Akad. Nauk SSSR Ser. Mat., \textbf{\ 50}{\ (1986) 675-710}.


\bibitem{ZkO}\textsc{V.V. Zhikov, S. M. Kozlov S. M.,  O. A. Oleinik,}{\ Homogenization of differential operators and integral functionals, } Springer-Verlag, Berlin {\ (1994)}.


\end{thebibliography}
\end{document}